\documentclass[11p,reqno]{amsart}
\textheight 8in
\textwidth 5.5 in
\voffset -0.3in
\hoffset -0.6in
\usepackage{amsmath}
\usepackage{amsfonts}
\usepackage{amssymb}
\usepackage{graphicx}
\usepackage{color}
\parindent 6pt
\parskip 4pt

\newtheorem{theorem}{Theorem}[section]
\newtheorem*{theorem*}{Theorem}
\newtheorem{lemma}{Lemma}[section]
\newtheorem{corollary}[theorem]{Corollary}
\newtheorem{proposition}{Proposition}[section]

\newtheorem{definition}[theorem]{Definition}

\newtheorem{conjecture}[theorem]{Conjecture}

\newtheorem{remark}[theorem]{Remark}


\def\i{\sqrt{-1}}
\def\Ric{\text{Ric}}

\def\i{\sqrt {-1}}

\def\aint{\frac{\ \ }{\ \ }{\hskip -0.4cm}\int}

\def\id{\operatorname{id}}
\def\Ric{\operatorname{Ric}}


\numberwithin{equation}{section}

\begin{document}
	\title[The fundamental group and the positivity]{The fundamental group, rational connectedness and  the positivity of K\"ahler manifolds}

\author{Lei Ni}\thanks{The research is partially supported by  ``Capacity Building for Sci-Tech Innovation-Fundamental Research Funds".  }
\address{Lei Ni. Department of Mathematics, University of California, San Diego, La Jolla, CA 92093, USA}
\email{leni@math.ucsd.edu}


\subjclass[2010]{53C55, 53C44, 53C30}
\keywords{ Positivity of K\"ahler manifolds, Simply-connectedness, Rational connectedness, K\"ahler C-spaces, Orthogonal Ricci, $\Ric_k$, Cross quadratic curvature, generalized Hartshone conjecture.}

\begin{abstract} First we confirm  a conjecture   asserting that any compact K\"ahler manifold $N$ with $\Ric^\perp>0$ must be simply-connected by applying a new  viscosity  consideration to Whitney's  comass of $(p, 0)$-forms. Secondly  we prove the projectivity and the  rational connectedness of a K\"ahler manifold of complex dimension $n$ under the condition  $\Ric_k>0$ (for some $k\in \{1, \cdots, n\}$, with $\Ric_n$ being the Ricci curvature), generalizing a well-known result of Campana, and independently of Koll\'ar-Miyaoka-Mori, for the Fano manifolds. The proof utilizes both the above comass consideration and  a second variation consideration of \cite{Ni-Zheng2}.  Thirdly, motivated by $\Ric^\perp$ and the classical work of Calabi-Vesentini \cite{CV}, we propose  two new curvature notions.  The cohomology vanishing $H^q(N, T'N)=\{0\}$ for any $1\le q\le n$ and  a deformation rigidity result are obtained under these new curvature conditions. In particular they are verified for all classical K\"ahler C-spaces with $b_2=1$. The new conditions provide viable candidates for a curvature characterization of homogenous K\"ahler manifolds related to a generalized Hartshone conjecture.
\end{abstract}

\maketitle

\section{Introduction}

K\"ahler manifolds bridge the Riemannian manifolds, complex manifolds and complex algebraic manifolds. It  avails analytic and geometric techniques in the study of algebraic manifolds via the GAGA principle. The first general result on the {\it projectivity} of a high dimensional K\"ahler manifold $(N, h)$, i.e. being able to be realized as a holomorphic submanifold in some projective space $\mathbb{P}^K$, was obtained by Kodaira \cite{Kod}. Kodaira proved that the projectivity is equivalent to  the existence of an {\it integral} K\"ahler form $\omega_h$ in $H^2(N, \mathbb{Z})$. It was also shown that this cohomological condition is  equivalent to the existence of a {\it positive line bundle} $L$. A line bundle is positive  means that  there exists a Hermitian metric $h$ on $L$ such that the Chern-form of $(L, h)$ is positive. From the Riemannian geometric point of view the most natural way of associating a line bundle to $N$ is via its {\it canonical line bundle ($K_N=\det(T'N)$}, the determinant bundle of  the holomorphic tangent bundle $T'N$) and {\it the anti-canonical  line bundle ($K_N^{-1}$)}. The associated intrinsic curvature (i.e. $c_1(M)$, the Chern form of $K^{-1}_N$) is the {\it Ricci} curvature of $(N, h)$.  Compact K\"ahler manifolds with positive Ricci curvature  form a special class of  smooth projective/algebraic varieties, i.e. the {\it Fano manifolds}.  Its study and the extension to varieties with various singularities  have been one of active focuses of the algebraic geometry during  last decades. In this paper we study a family of  {\it intrinsic} curvature conditions (generalizing the Ricci curvature), whose positivity implies {\it the projectivity and   the  rational connectedness} of a compact K\"ahler manifold.

Rational connectedness is an important/useful property for algebraic manifolds \cite{Deb}. For compact K\"ahler manifolds with positive Ricci curvature this property was established by Campana \cite{Cam}, Koll\'ar-Miyaoka-Mori \cite{KMM}.  In this paper we show that

\begin{theorem}\label{thm:4} Let  $(N^n, h)$ be a compact K\"ahler manifold with $\Ric_k>0$, for some $1\le k\le n$. Then  $N$ is  projective and rationally connected. In particular, $\pi_1(N)=\{0\}$.
\end{theorem}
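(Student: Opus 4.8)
The plan is to establish the conclusions in the order \emph{projectivity}, \emph{rational connectedness}, and then \emph{simple connectivity}, the last being automatic once the second is known, since a smooth projective rationally connected manifold has trivial fundamental group (see \cite{Cam}, \cite{Deb}). The two analytic engines are the comass/viscosity argument advertised in the abstract and the second variation computation of \cite{Ni-Zheng2}; each converts the pointwise positivity of $\Ric_k$ into a vanishing statement for holomorphic tensors, and it is these vanishing statements that feed the algebraic geometry.

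For projectivity I would show $H^0(N,\Omega^2_N)=\{0\}$, i.e.\ $h^{2,0}(N)=0$. Let $\phi$ be a holomorphic $p$-form (here $p=2$, but the argument is uniform in $p$) and set $u(x)=\|\phi\|(x)$, the Whitney comass of $\phi$, a nonnegative Lipschitz function on the compact manifold $N$. At a maximum point $x_0$ choose a unit decomposable $p$-vector $e_1\wedge\cdots\wedge e_p$ realizing $u(x_0)$ and apply the maximum principle to $\log u$ in the viscosity sense. Holomorphicity of $\phi$ together with the Bochner--Kodaira identity forces, at $x_0$, a curvature term that is a sum of holomorphic bisectional curvatures over the plane $\mathrm{span}\{e_1,\dots,e_p\}$; the hypothesis $\Ric_k>0$ bounds this term strictly below by a positive constant, which is incompatible with $u$ attaining an interior maximum unless $u\equiv 0$. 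Hence $\phi\equiv 0$, giving $h^{2,0}(N)=0$. Then $H^2(N,\RR)=H^{1,1}(N,\RR)$, so the open K\"ahler cone contains a rational class (as $H^2(N,\mathbb{Q})$ is dense in $H^2(N,\RR)$); rescaling yields an integral K\"ahler class, and Kodaira's embedding theorem \cite{Kod} gives projectivity.

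For rational connectedness I would invoke the characterization of Campana \cite{Cam} and Koll\'ar--Miyaoka--Mori \cite{KMM}: a smooth projective manifold $N$ is rationally connected if and only if $H^0\big(N,(\Omega^1_N)^{\otimes m}\big)=\{0\}$ for every $m\ge1$. To obtain this stronger vanishing I would upgrade the previous analysis from exterior forms to tensor powers, replacing the comass by the natural pointwise operator norm $u=\|\sigma\|$ of a holomorphic section $\sigma$ of $(\Omega^1_N)^{\otimes m}$ and re-running the viscosity maximum principle. The decisive point is the sign of the curvature term produced at the maximum; here the second variation consideration of \cite{Ni-Zheng2} supplies the identity showing that $\Ric_k>0$ makes the relevant quadratic form strictly positive, so $\sigma\equiv 0$ as before. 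The cited criterion then yields rational connectedness, and with it $\pi_1(N)=\{0\}$.

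The step I expect to be the main obstacle is the maximum-principle analysis itself, for two reasons. First, the comass (and the tensor norm $\|\sigma\|$) is only Lipschitz: the extremal $p$-plane realizing $u(x_0)$ may vary discontinuously, so the second-order argument must be set up with smooth upper barriers and carried out in the viscosity framework, and one must check that the extremal plane is stable enough to second order for the Bochner term to emerge cleanly. Second, one must ensure that the curvature contribution is always controlled by the single hypothesis $\Ric_k>0$; this requires the correct bookkeeping relating the chosen $k$ to the plane sizes appearing in the two computations, and in the tensor case the precise positivity furnished by the second variation of \cite{Ni-Zheng2}. Once these are secured, the passage from the vanishing theorems to projectivity and rational connectedness is formal.
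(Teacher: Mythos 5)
Your projectivity half is essentially the paper's own argument: the comass of a holomorphic $(p,0)$-form is maximized at some $x_0$, one builds a smooth simple local form whose $L^2$-norm touches the comass from below at $x_0$ (your ``smooth upper barrier''), and applying $\partial_v\bar\partial_{\bar v}\log\|\cdot\|^2$ yields $\sum_{j=1}^p R_{v\bar v j\bar j}\le 0$ at $x_0$ for every $v$. The bookkeeping you worry about is resolved exactly as you suspect it must be: for $p<k$ one extends the extremal $p$-plane to a $k$-plane $\Sigma$ and averages $v$ over the unit sphere of $\Sigma$, so that $\aint R_{v\bar v j\bar j}\,d\theta(v)=\tfrac1k \Ric_k(\partial_j,\bar\partial_j)>0$ contradicts the inequality above; for $p\ge k$ one uses $S_k>0$ (implied by $\Ric_k>0$) and the scalar-average identity. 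In fact the paper gets $h^{p,0}=0$ for \emph{all} $1\le p\le n$ this way, not just $p=2$, and then Kodaira's theorem gives projectivity as you describe.

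The rational connectedness step, however, contains a genuine gap: the ``characterization'' you invoke --- that a smooth projective $N$ is rationally connected \emph{if and only if} $H^0(N,(\Omega^1_N)^{\otimes m})=\{0\}$ for all $m\ge 1$ --- is not a theorem of Campana or Koll\'ar--Miyaoka--Mori. Only the forward implication is known; the converse is Mumford's conjecture, open in dimension $\ge 4$. So the untwisted vanishing your maximum principle would produce does not suffice. The criterion that actually exists, and the one the paper uses, is Campana--Demailly--Peternell \cite{CDP}: $N$ is rationally connected iff for every ample line bundle $L$ there is $C(L)$ with $H^0(N,((T'N)^*)^{\otimes p}\otimes L^{\otimes \ell})=\{0\}$ for all $p\ge C(L)\ell$. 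The ample twist works \emph{against} you: at the maximum of $|s|^2$ the $\partial\bar\partial$-Bochner formula produces a negative contribution $-\ell C_a(v,\bar v)$, and you must show the $p$ tangential curvature summands beat it. This is precisely where the second variation estimates of \cite{Ni-Zheng2} enter: choosing $\Sigma$ to minimize $S_k(x_0,\Sigma)$, Proposition 3.1 there gives the cross-term vanishing $\aint R(E,\overline{E}',Z,\overline{Z})\,d\theta(Z)=0$ and the fiber-direction lower bound $\aint R(E',\overline{E}',Z,\overline{Z})\,d\theta(Z)\ge S_k(x_0,\Sigma)/(k(k+1))$ for $E'\perp\Sigma$, whence each summand averaged over $v\in\mathbb{S}^{2k-1}\subset\Sigma$ is at least $\min(\delta_1,\delta_2)/(k(k+1))$ and one reaches $0\ge\bigl(p\,\tfrac{\min(\delta_1,\delta_2)}{k(k+1)}-\ell A\bigr)|s|^2$, a contradiction for $p\ge C(L)\ell$. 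With this replacement your outline becomes the paper's proof; your deduction of $\pi_1(N)=\{0\}$ from rational connectedness via Corollary 4.29 of \cite{Deb} is fine (the paper also records the alternative route through $h^{p,0}=0$, $\chi(\mathcal{O}_N)=1$ and Riemann--Roch, following Kobayashi).
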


 The $\Ric_k$ is defined as the Ricci curvature of the $k$-dimensional holomorphic subspaces of the holomorphic tangent bundle $T'N$. Hence  it coincides with the holomorphic sectional curvature $H(X)$ when $k=1$, and with the Ricci curvature when $k=n=\dim_{\mathbb{C}}(N)$. The condition $\Ric_k>0$ is significantly different from its Riemannian analogue, i.e. the so-called $q$-Ricci (see next section for details), since it exams only the holomorphic subspaces in $T'N$, thus unlike its Riemannian analogue,  $\Ric_k>0$ does not imply $\Ric_{k+1}>0$. $\Ric_k>0$ means that every infinitesimal $k$-dimensional holomorphic subvariety is Fano. The notion of $\Ric_k$ was initiated in a recent study of the $k$-hyperbolicity of  a compact K\"ahler manifold by the author \cite{N}. It   is closely related to the degeneracy of holomorphic mappings from $\mathbb{C}^k$ into concerned manifolds (cf. Theorem 1.3 of \cite{N}). The condition $\Ric_k>0$ allows some  negativity of (holomorphic) sectional curvature if $k>1$. Note that all Hirzebruch surfaces (and generalized Hirzebruch manifolds)  admit K\"ahler metric with $\Ric_1>0$. This contracts sharply with the Fano condition of $\Ric>0$. The class of manifolds with $\Ric_k>0$ (for $k<n$) contains many non-Fano examples. It is interesting to find for what $k$ the class $\Ric_k>0$  has finite deformation connected components (cf. \cite{Nadel, KMM}  for the Fano case).

 The proof of Theorem \ref{thm:4} is completely different from that of  \cite{Cam, KMM, HeierWong}. Here it is built upon  recent techniques of  applying the  (partial) maximum principle  via the viscosity consideration developed  by the author in \cite{N, Ni-ijm}. It is in the proof of the projectivity,  Whitney's comass (amounts to an operator norm) is employed to localize the problem. The proof of rational connectedness also needs  a second variation consideration  of author (with F. Zheng \cite{Ni-Zheng2}) to obtain some desired estimates in the fiber direction of the flags in  $T'N$. The proof also uses a recent result  of \cite{CDP}.

The literature on  the fundamental group $\pi_1(N)$ of a K\"ahler manifold $N$ is big (cf. \cite{ABCKT}, \cite{Ni-ijm} and references there).
A result of Kobayashi \cite{Ko} asserts that a compact K\"ahler manifold with $\Ric>0$ must be simply-connected.  Same conclusion was proved  by Tsukamoto  \cite{Tsu} for compact K\"ahler manifold with $H>0$.   The next result of this paper provides an analogue of   Kobayashi's and Tsukamoto's theorems for K\"ahler manifolds with $\Ric^\perp>0$.
\begin{theorem}\label{thm:0-2} A compact K\"ahler manifold $N$ with  $\Ric^\perp>0$ must be simply-connected.
\end{theorem}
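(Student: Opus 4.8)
The plan is to combine two ingredients in the classical manner of Kobayashi. Recall that for a unit $X\in T'N$ the orthogonal Ricci curvature is $\Ric^\perp(X,\bar X)=\Ric(X,\bar X)-H(X)$, the Ricci curvature with the holomorphic sectional direction removed. I would first show that $\pi_1(N)$ is finite, so that the universal cover $\tN$ is again a compact K\"ahler manifold with $\Ric^\perp>0$, and second that on any such manifold every holomorphic $p$-form vanishes for $1\le p\le n$; by Hodge symmetry $h^{0,q}=h^{q,0}$ this gives $H^q(\mathcal{O})=\{0\}$ for $q\ge1$, so the arithmetic genus is $\chi(\mathcal{O})=1$. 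Granting these, $\tpi:\tN\to N$ is finite \'etale of degree $d=|\pi_1(N)|$, and multiplicativity of the holomorphic Euler characteristic under \'etale covers gives $1=\chi(\mathcal{O}_{\tN})=d\cdot\chi(\mathcal{O}_N)=d$, forcing $d=1$ and hence $\pi_1(N)=\{0\}$.

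For the finiteness of $\pi_1$ I would prove a Bonnet--Myers theorem adapted to $\Ric^\perp$. Along a unit-speed geodesic $\gamma$ in $\tN$ with real tangent $T$, I would feed the second variation of arc length only the $2(n-1)$ parallel variation fields spanning the $J$-invariant orthogonal complement of the $2$-plane $\mathrm{span}_{\mathbb R}\{T,JT\}$, deliberately discarding $JT$. Summing the resulting index forms, the K\"ahler identities turn the total curvature contribution into $\Ric^\perp(T^{1,0},\overline{T^{1,0}})$ up to a positive constant, since excluding $JT$ removes exactly the holomorphic sectional term from the real Ricci curvature. Compactness of $N$ gives a uniform bound $\Ric^\perp\ge c>0$, so for $\gamma$ longer than $\pi\sqrt{(n-1)/c}$ the summed index form is negative and $\gamma$ is not minimizing; this bounds $\mathrm{diam}(\tN)$, whence $\tN$ is compact and $\pi_1(N)$ is finite. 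The same averaging idea also yields $\Scal>0$: averaging $\Ric^\perp(X,\bar X)$ over the unit sphere in $T'_xN$ produces $\tfrac{n-1}{n(n+1)}\Scal$, so $\Ric^\perp>0$ forces $\Scal>0$, which kills holomorphic $n$-forms and settles the top degree $p=n$.

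For the vanishing of $H^{p,0}$ in the remaining degrees I would run the viscosity/partial maximum principle on Whitney's comass. Given a nonzero holomorphic $p$-form $\varphi$, set $u(x)=\|\varphi(x)\|_{\mathrm{comass}}$, the supremum of $|\varphi|$ over unit decomposable $p$-vectors, a continuous function on the compact $N$; let $x_0$ be a maximum with $u(x_0)>0$ and $V_0=\mathrm{span}(e_1,\dots,e_p)$ a maximizing complex $p$-plane, with the phase chosen so that $\varphi(e_1,\dots,e_p)>0$. Lifting $u$ to the Grassmannian bundle $\mathrm{Gr}_p(T'N)$ and testing against the smooth support function obtained by parallel transport of the maximizing frame, the first-order fiberwise maximality forces every cross component $\varphi(e_1,\dots,e_\beta,\dots,e_p)$ with $\beta>p$ to vanish, while $\bar\partial\varphi=0$ removes the anti-holomorphic derivatives. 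The Bochner computation at $x_0$ then reduces to a curvature term supported on $V_0$, and the vertical (Grassmannian) second-order maximality is what is needed to absorb the holomorphic sectional and internal bisectional contributions, leaving a term bounded below by $\sum_{a=1}^p\Ric^\perp(e_a,\bar e_a)>0$. Since $\Delta u\le 0$ at the maximum in the viscosity sense, this is a contradiction, so $\varphi\equiv0$.

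The main obstacle is precisely this last curvature bookkeeping: the naive support function produces the full Ricci curvature $\sum_{a\le p}\Ric(e_a,\bar e_a)$ rather than $\Ric^\perp$, and it is only the vertical variation along the fibers of $\mathrm{Gr}_p(T'N)$ --- the second variation consideration in the spirit of \cite{Ni-Zheng2} --- that converts the offending diagonal and within-plane terms into the orthogonal Ricci curvature. Making this rigorous requires care on two fronts: the comass is merely Lipschitz, so the maximum principle must be applied in the viscosity sense to the lifted function on $\mathrm{Gr}_p(T'N)$ rather than to $u$ directly; and the interior degrees $2\le p\le n-2$, where the mixed curvature between $V_0$ and its orthogonal complement is not pointwise dominated by $\Ric^\perp$, are exactly where the fiberwise maximality must be exploited most delicately. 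The endpoint degrees $p=1$ (handled by $b_1=0$) and $p=n-1,n$ (where the reduction to $\Ric^\perp$, respectively $\Scal$, is transparent) serve as consistency checks on the bookkeeping.
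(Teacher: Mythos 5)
Your global architecture is exactly right and matches the paper's: finiteness of $\pi_1$ via a Bonnet--Myers argument adapted to $\Ric^\perp$ (this is how \cite{NZ} gets compactness of the universal cover, and the paper's Theorem \ref{thm-diameter} runs the same second-variation computation for the related curvatures), the spherical averaging identity of Lemma \ref{lm:Gold}, the vanishing $h^{p,0}=0$ for all $1\le p\le n$, and then Kobayashi's $\chi(\mathcal{O})$ multiplicativity to force the covering degree to be $1$. The comass/viscosity idea for the vanishing theorem is also the paper's idea. But the central curvature bookkeeping in your third paragraph contains a genuine gap: you attribute the conversion of $\sum_{a\le p}\Ric(e_a,\bar e_a)$ into an orthogonal-Ricci quantity to ``vertical (Grassmannian) second-order maximality'' of the lifted comass. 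This cannot work as stated, because fiberwise derivatives of $(x_0 \text{ fixed},\,\Sigma)\mapsto |\varphi(e_1,\dots,e_p)|$ differentiate only the \emph{components of $\varphi$}; no curvature appears in the vertical Hessian. Curvature enters exclusively through horizontal $\partial\bar\partial$ derivatives of the metric coefficients. Your first-order fiberwise observation (vanishing of cross components at the maximizing plane) is correct but carries no curvature information, and the claimed conclusion --- a term bounded below by $\sum_{a=1}^p\Ric^\perp(e_a,\bar e_a)$ --- is not derivable from what the maximum principle actually yields. Indeed the support-function computation gives (\ref{eq:23}): $\sum_{j\le p}R_{v\bar v j\bar j}\le 0$ for \emph{every} $v\in T'_{x_0}N$; taking $v\in\Sigma$ and summing gives $S_p(x_0,\Sigma)\le 0$, taking $v\perp\Sigma$ gives $\sum_{i\le p}\Ric_{i\bar i}\le S_p(x_0,\Sigma)$, but the frame-diagonal sum $\sum_{a\le p}H(e_a)$ is not controlled by these inequalities, so your target quantity cannot be bounded.

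The paper's fix is algebraic rather than variational: one averages $\Ric^\perp$ over the whole unit sphere of $\Sigma$ and invokes the Berger-type identity (\ref{eq:24}), $S_p^\perp(x_0,\Sigma)=\sum_{i\le p}\Ric_{i\bar i}-\frac{2}{p+1}S_p(x_0,\Sigma)$, in which the averaged holomorphic sectional contribution appears as $\frac{2}{p+1}S_p$ rather than as $\sum_a H(e_a)$; combining with the two horizontal inequalities gives $0<S_p^\perp(x_0,\Sigma)\le\frac{p-1}{p+1}S_p(x_0,\Sigma)$, contradicting $S_p(x_0,\Sigma)\le 0$ (and $\Ric^\perp>0$ implies $S_p^\perp>0$ for every $p$, so all degrees are handled uniformly --- no separate treatment of interior degrees is needed). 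Two further corrections to your setup: the paper never lifts to $\mathrm{Gr}_p(T'N)$ nor works with a merely Lipschitz function in the viscosity sense there; Whitney's Theorem \ref{prop:21} produces a maximizing \emph{simple} $p$-vector at $x_0$, and the smooth local support object is the simple form $\widetilde{\phi}$ built from the single normalized coefficient $a_{1\cdots p}\,dz^1\wedge\cdots\wedge dz^p$, whose pointwise norm minorizes $\|\phi\|_0$ and touches it at $x_0$, after which everything is a classical maximum principle on the base. Finally, the fiberwise second variation of \cite{Ni-Zheng2} (Proposition \ref{prop:31}) that you gesture at does appear in the paper, but in the proof of rational connectedness (Theorem \ref{thm:61}), where the functional varied along the fiber is the curvature average $S_k(x_0,\cdot)$ at its \emph{minimizing} plane --- there the vertical variation genuinely sees curvature; for the comass of a holomorphic form it does not, and that is precisely where your proposed argument breaks.
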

 The result was conjectured in \cite{NZ}.  In \cite{NZ},  motivated by the Laplace comparison theorem and the holomorphic Hessian comparison theorem, {\em orthogonal Ricci curvature}
$$ \Ric^{\perp}(X,\overline{X})\doteqdot\Ric(X, \overline{X}) - R(X, \overline{X}, X, \overline{X})/|X|^2, $$
for any type $(1,0)$ tangent vector $X$, was studied. Here $R(X,\overline{X}, X, \overline{X})$ is the {\it holomorphic sectional curvature}  (denoted as $H(X)$), which is the Gauss curvature of the infinitesimal curve tangent to $X$. For a compact K\"ahler manifold $N^n$ ($n=\dim_{\mathbb{C}}(N)$),  with $\Ric^{\perp}>0$ everywhere, its projectivity was shown in \cite{NZ}, via a uncommon unitary congruence normal form for $(2,0)$-forms.\footnote{Due to E. Cartan implicitly (L.-G. Hua explicitly in a paper of 1945), cf. p151 of \cite{NZ}. This partially contributes to  the gap between the first appearance of  $\Ric^\perp$  \cite{Mi-Pal} and meaningful results in \cite{NZ}, \cite{NWZ}, \cite{NZ-con}.} It was also proved  in \cite{NZ} that  $|\pi_1(N)|<\infty$ in general, and $\pi_1(N)=\{0\}$ for $n=2,3,4$.

   Unlike $\Ric$, $\Ric^\perp(X, \overline{X})$ does not come from a Hermitian symmetric sesquilinear form. But it can be viewed as the holomorphic sectional curvature of a Bochner curvature operator (namely the curvature operator which arises in the standard Bochner formula computing the Laplacian of the square of the norm of two forms, cf. \cite{NZ, NWZ}).  Despite this close connection with the holomorphic sectional curvature, the proof of Theorem \ref{thm:0-2}  follows the scheme of \cite{Ko} (for $\Ric>0$) via a Riemann-Roch-Hirzebruch formula and a vanishing theorem on Hodge numbers $h^{p, 0}$.  The vanishing of $h^{p, 0}$, $\forall 1\le p\le n$, needed in \cite{Ko} is known by Kodaira's vanishing theorem. However the proof of $h^{p, 0}=0, \forall 1\le p\le n$ under $\Ric^{\perp}>0$ (cf. Theorem  \ref{thm:1})  requires a completely new idea which involves a novel use of a viscosity consideration of Whitney's comass. The effective method also plays an important role in the proof of the above rational connectedness result. Note  that our proof applies to the case of $H>0$ (implying  Tsukamoto's result). It  provides a  unified argument for all cases of  $\{\Ric_k>0\}, k\in\{1,\cdots, n\}$, and $\Ric^\perp>0$   with additional information  $h^{p, 0}=0, \forall 1\le p\le n$.

 The study  of $\Ric^\perp>0$ in \cite{NZ-con, NWZ} is  also motivated by the so-called generalized Hartshorne conjecture (cf. Conjectures 11.1, 11.2 of \cite{CP} and Conjecture 8.23 of \cite{Zheng}): {\it A Fano manifold has nef tangent bundle if and only if it is a K\"ahler C-space.} The first curvature notion one naturally would like to associate with the nefness condition  is the so-called almost nonnegativity of bisectional  curvature. However, it has been proved  recently  that the almost nonnegativity of the  bisectional curvature \cite{BCW}    implies that the manifolds are  diffeomorphic to compact quotients of Hermitian symmetric spaces, provided the volume is noncollapsing.  A recent work of the author with X. Li \cite{LN} extends this to  the (weaker) almost nonnegativity of the orthogonal bisectional curvature.  In \cite{WYZ} a curvature positivity notion, namely  the  quadratic orthogonal bisectional curvature QB (cf. (\ref{eq:12}) for its definition)  was proposed (by Wu-Yau-Zheng) for the purpose of  a curvature characterization of the K\"ahler C-spaces.  Unfortunately  as shown in  \cite{ChauTam} it is  a bit off the target since only for about eighty percentage of classical K\"ahler C-spaces with  2nd Betti number $b_2=1$ (with the canonical K\"ahler-Einstein metric) have  QB$>0$, while for the rest twenty percent manifolds QB$<0$  somewhere. As a step back, the positivity of $\Ric^\perp$ was studied in \cite{NZ-con} for the purpose of the curvature characterization of C-spaces, since  on all classical C-spaces with $b_2=1$ the canonical K\"ahler-Einstein metrics satisfy $\Ric^\perp>0$ \cite{NWZ}. Further studies of compact K\"aher manifolds with $\Ric^\perp>0$ were carried in a recent work \cite{NWZ}: A complete classification for threefolds, a partial classification for fourfolds, and  a Frankel type result were obtained for compact K\"ahler manifolds with $\Ric^\perp>0$ in \cite{NWZ}. Many examples were also constructed in \cite{NZ, NWZ} illustrating that $\Ric^\perp$, $H$, and $\Ric$ are completely independent except the trivial relation $\Ric(X, \overline{X})=\Ric^\perp(X, \overline{X})+H(X)/|X|^2$.
  On the other hand, except for dimension $n=2,3$ (and $n=4$ if one is  optimistic) \cite{NWZ}, $\Ric^\perp>0$ appears  not enough to imply that the manifold is a K\"ahler C-space. This is partially reflected by  the flexible constructions of metrics with $\Ric^\perp>0$ on fiber bundles over a positively curved K\"ahler manifold in \cite{NWZ}.

 At the same time, motivated by the local rigidity theorem of Calabi-Vesentini \cite{CV}, the above relation between $\Ric^{\perp}$ and the generalized Hartshone conjecture,
   we introduce two stronger (than $\Ric^\perp$) notions of intrinsic curvature positivity, namely the {\it cross quadratic bisectional curvature} (abbreviated as CQB$>0$) and its dual $^d$CQB (cf. (\ref{eq:cqb-define}) and  (\ref{eq:dcqb})) in this paper.  The defining expressions appear similar  to the quadratic orthogonal bisectional curvature. However, a sharp  contrast is that the positivity of CQB and its dual can be verified   for all classical K\"ahler C-spaces with $b_2=1$ (cf. Theorems \ref{thm:NWZ1}, \ref{thm:NWZ12}). Results as initial studies of these two notions of curvature in this paper includes:  (1)  CQB$>0$ implies $\Ric^\perp>0$; (2) The projectivity and simply-connectedness of manifolds with  CQB$>0$ or $^d$CQB$>0$ (cf. Theorem \ref{thm:3}); (3) A deformation rigidity result for manifolds with  quasi-positive $^d$CQB (or quasi negative CQB).  Since there are non locally Hermitian symmetric manifolds with  CQB$<0$ (3) generalizes the result of Calabi-Vesentini \cite{CV}. Utilizing the K\"ahler-Ricci flow,   the Fanoness was proved under the assumptions of CQB$\ge 0$ (or $^d$CQB$\ge 0$) and the  finiteness of $\pi_1(N)$  recently in \cite{NZ-cross} joint with F. Zheng.  In particular CQB$> 0$ (or $^d$CQB$> 0$) implies that $N$ is Fano. Hence there is a good chance that one of these two  curvature notions can provide the curvature characterization of the K\"ahler C-spaces. Tracing  $^d$CQB (CQB) leads to a related notion of Ricci curvature,  namely $\Ric^+$ ($\Ric^\perp$ respectively). We also show that a compact K\"ahler manifold with $\Ric^+>0$ is projective and simply-connected. It is also proved in this paper that any compact K\"ahler manifolds with quasi-positive $\Ric^\perp$ and of Picard number one must be Fano. However, the rational connectedness of manifolds with $\Ric^{\perp}>0$ remains unknown. Since the condition $\Ric^\perp>0$ allows  arbitrarily large $b_2$ (cf. \cite{NZ-cross} for examples of Type A K\"ahler C-spaces  with CQB$\ge 0$, $\Ric^{\perp}>0$, $^d$CQB$>0$, and   with arbitrarily large $b_2$), the implications of $\Ric^\perp>0$ on the dimension of certain harmonic $(1,1)$-forms is included in the appendix.

 The well-known curvature notions for K\"ahler manifolds   include the {\it sectional curvature}, the  {\it bisectional curvature} $B(X, Y)\doteqdot R(X, \overline{X}, Y, \overline{Y})$, and the {\it  holomorphic sectional curvature}, $H(X)$ mentioned above.
  For Hermitian manifolds there are Griffiths' positivity \cite{Gri}. Restricted to K\"ahler manifolds, it is the same as $B>0$. Various positivity notions in algebraic geometry are discussed in the excellent books of Lazarsfeld \cite{La12}. However the positivity (even the nonnegativity) of bisectional curvature is rather restrictive for compact K\"ahler manifolds since Mori's solution of the Hartshorne conjecture \cite{Mori} asserts  that if $T'N$ is ample  the complex manifold $N=\mathbb{P}^n$, the complex projective space. In particular, since $B>0$ implies that  $T'N$ is ample (cf. Theorem 6.1.25 of \cite{La12}), Mori's result implies that the only compact K\"ahler manifold with $B>0$ is  $\mathbb{P}^n$ (cf.  \cite{SY}, for an independent  K\"ahler  geometric proof of Siu-Yau).

Related to results of this paper, the projectivity of compact K\"ahler manifolds with $S_2>0$ was recently proved in \cite{Ni-Zheng2}, generalizing an earlier result of \cite{XYang} under the stronger assumption $H>0$. A general (maybe the most general possible) result on a weaker criterion of the projectivity can be found in Corollary \ref{coro:6-add-3} (i.e. any K\"ahler manifold of BC-$2$ positive curvature is projective). The vanishing  of $h^{p, 0}$ in Theorem \ref{thm:4} and \ref{thm:0-2} is also extended to broader cases in  Theorem \ref{thm:6-added-ricciperp}.  For  algebraic manifolds with $H>0$ the rational connectedness was  proved in \cite{HeierWong} (cf. also \cite{Yang18}). Recent work \cite{Ni-ijm} also contains results on the fundamental groups of compact K\"ahler manifolds with $\Ric_k\ge 0$, in particular $H\ge 0$.
  We hope that this paper serves an introduction to relatively new notions of positivity concerning the intrinsic metrics, namely $\Ric_k$, $\Ric^\perp_k$, CQB$_k$, as well as their dual $\Ric^+_k$, $^d$CQB$_k$,  for K\"ahler manifolds. One can find many questions/open problems, and examples,  in later sections of this paper.

\section{Definitions and statements of results}

We start  with the following conjecture  proposed in  \cite{NZ} (cf. Conjecture 1.6).

\begin{conjecture}\label{conj} Let $N^n$ ($n\geq 2$) be a compact K\"ahler manifold with $\Ric^{\perp} >0$ everywhere. Then for any $1\leq p\leq n$, there is no non-trivial global holomorphic $p$-form, i.e. the Hodge number $h^{p,0}=0$. In particular, $N^n$ is simply-connected.
\end{conjecture}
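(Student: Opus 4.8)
The plan is to reduce the simple-connectedness to the vanishing $h^{p,0}=0$ for all $1\le p\le n$, and then to prove that vanishing by a maximum-principle argument for Whitney's comass. For the reduction I would use that $\chi(N,\mathcal{O}_N)=\sum_{q=0}^n(-1)^q h^{0,q}$ and, by the Hodge symmetry $h^{0,q}=h^{q,0}$ on a compact K\"ahler manifold, once $h^{q,0}=0$ for all $1\le q\le n$ we get $\chi(N,\mathcal{O}_N)=h^{0,0}=1$. Since it is already known from \cite{NZ} that $\Ric^\perp>0$ forces $|\pi_1(N)|<\infty$, I pass to the universal cover $\pi\colon\widetilde N\to N$, a finite holomorphic (\'etale) cover that is again compact K\"ahler and carries $\Ric^\perp>0$ for the pulled-back metric. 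Applying the vanishing on $\widetilde N$ as well gives $\chi(\widetilde N,\mathcal{O}_{\widetilde N})=1$, while the multiplicativity of the holomorphic Euler characteristic under finite \'etale covers yields $\chi(\widetilde N,\mathcal{O}_{\widetilde N})=\deg(\pi)\,\chi(N,\mathcal{O}_N)=\deg(\pi)$. Hence $\deg(\pi)=|\pi_1(N)|=1$. This is exactly Kobayashi's scheme \cite{Ko} for $\Ric>0$, with the Kodaira-vanishing input replaced by the comass argument below.

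So everything rests on showing that $\Ric^\perp>0$ admits no nontrivial holomorphic $p$-form $\alpha$ for $2\le p\le n$ (the case $p=1$ is already contained in $|\pi_1|<\infty$). Assume $\alpha\neq 0$; as a holomorphic section of $\Omega^p_N$ it is harmonic and satisfies $\dbar\alpha=0$, i.e. $\nabla^{0,1}\alpha=0$. I consider the comass $f(x)=\sup\{|\alpha_x(\xi)| : \xi \text{ a unit simple }(p,0)\text{-vector}\}$, a continuous function attaining a positive maximum at some $x_0$, where the supremum is realized by a decomposable $\xi_0=e_1\wedge\cdots\wedge e_p$ spanning a $p$-plane $\Sigma_0\subset T'_{x_0}N$. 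Because $f$ is only Lipschitz, I run a viscosity argument: choose a smooth field $\widetilde\xi$ of unit simple $(p,0)$-vectors with $\widetilde\xi(x_0)=\xi_0$, so that $\phi:=|\alpha(\widetilde\xi)|\le f$ everywhere with equality at $x_0$; then $\phi$ is smooth near $x_0$ (where $\phi>0$) and also attains a maximum there, giving $\nabla\phi^2(x_0)=0$ and $\Delta\phi^2(x_0)\le 0$.

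The heart of the matter is the choice of $\widetilde\xi$ and the resulting Bochner identity. I take $\widetilde\xi=\sigma/|\sigma|$ for a holomorphic extension $\sigma=v_1\wedge\cdots\wedge v_p$ of $\xi_0$ (with $v_i$ local holomorphic vector fields); then $w:=\alpha(\sigma)$ is a holomorphic function, $\log\phi^2=\log|w|^2-\log|\sigma|^2$, and since $\log|w|^2$ is harmonic near $x_0$ one gets $\Delta\log\phi^2(x_0)=-\Delta\log|\sigma|^2(x_0)$, whose curvature part is the trace along $\Sigma_0$ of the curvature of $\Lambda^p T'N$, namely $\sum_{a=1}^p\Ric(e_a,\bar e_a)$. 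By itself this only contradicts $\Ric>0$; to descend to $\Ric^\perp>0$ I exploit that $x_0$ maximizes the comass also over the Grassmannian of $p$-planes at $x_0$. The first variation (rotating $e_a$ toward $e_c$, $c>p$) forces the mixed components $\alpha(e_1\wedge\cdots\wedge e_c\wedge\cdots\wedge e_p)$ to vanish, and the second variation supplies an inequality among the remaining gradient and off-diagonal terms. Feeding these back into the refined Bochner identity should convert $\sum_a\Ric(e_a,\bar e_a)$, together with the retained gradient terms, into a positive multiple of $\sum_{a=1}^p\Ric^\perp(e_a,\bar e_a)$, absorbing the holomorphic sectional contributions $H(e_a)$; positivity of $\Ric^\perp$ then contradicts $\Delta\phi^2(x_0)\le 0$ and forces $\alpha=0$.

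The main obstacle is precisely this last conversion. Since $\Ric^\perp(X,\bar X)=\Ric(X,\bar X)-H(X)/|X|^2$ is not a Hermitian-symmetric curvature, it cannot emerge from any naive Weitzenb\"ock formula: the $H$-terms have to be removed by hand, and the only leverage available is the maximality of the comass in the Grassmann directions together with the constraint $\nabla^{0,1}\alpha=0$. Making the first- and second-variation bookkeeping precise, and checking that it cancels the holomorphic sectional contributions exactly rather than leaving an uncontrolled sign, is the delicate point; the same mechanism, specialized to one direction or to a $k$-dimensional subspace, is what should yield the uniform treatment of $H>0$ and of $\Ric_k>0$.
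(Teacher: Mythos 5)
Your reduction of simple-connectedness to the vanishing of $h^{p,0}$ is exactly the paper's (Kobayashi's scheme via Riemann--Roch--Hirzebruch, finiteness of $\pi_1$ from \cite{NZ}, and multiplicativity of $\chi(\mathcal{O})$ under the finite \'etale cover), and your viscosity setup coincides with the paper's as well: your comparison function $\phi^2=|\alpha(\sigma)|^2/|\sigma|^2$, with $\sigma$ a holomorphic simple extension of the maximizing $p$-vector, is literally the paper's $\|\widetilde{\phi}\|^2=|a_{1\cdots p}|^2/\det(g_{\alpha\bar{\beta}})_{1\le \alpha,\beta\le p}$, dominated by the comass by Whitney's properties and maximal at $x_0$. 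But the core of the theorem --- passing from the resulting curvature inequality to $\Ric^\perp>0$ --- is left unproved in your proposal, and the mechanism you suggest is not the one that works. You apply the full Laplacian to $\log\phi^2$, which only yields $\sum_{a\le p}\Ric(e_a,\bar{e}_a)\le 0$ and hence only contradicts $\Ric>0$, and then you hope to recover the $-H(e_a)$ corrections from first and second variations of the comass over the Grassmannian at $x_0$. That variational step is speculative and, as far as one can see, cannot produce what you need: varying the maximizing simple vector at the fixed point $x_0$ constrains the \emph{components} of $\alpha$ at $x_0$ (first variation does kill mixed components, as you say), but the second variation in Grassmann directions involves no curvature at all --- curvature enters only through differentiation in the base, which you have already spent on the trace.

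The missing idea is twofold, and neither part is a variation over the Grassmannian. First, apply $\partial_v\partial_{\bar{v}}$ to $\log\phi^2$ at $x_0$ for each \emph{fixed} direction $v$ rather than only tracing: since $\log|\alpha(\sigma)|^2$ is pluriharmonic, this gives the directional inequality (\ref{eq:23}),
\begin{equation*}
\sum_{j=1}^p R_{v\bar{v}j\bar{j}}\le 0 \quad\text{for \emph{every} } v\in T'_{x_0}N,
\end{equation*}
which is much stronger than its trace. Second, the holomorphic sectional contributions are absorbed not pointwise but by a purely algebraic Berger-type average over the unit sphere of $\Sigma=\operatorname{span}\{e_1,\dots,e_p\}$, namely the identity (\ref{eq:24}):
\begin{equation*}
S^{\perp}_p(x_0,\Sigma)=\sum_{a=1}^p\Ric(e_a,\bar{e}_a)-\frac{2}{p+1}\,S_p(x_0,\Sigma).
\end{equation*}
Now specialize the directional inequality twice: taking $v\in\{e_1,\dots,e_p\}$ gives $S_p(x_0,\Sigma)\le 0$, while taking $v\in\{e_{p+1},\dots,e_n\}$ gives $\sum_a\Ric(e_a,\bar{e}_a)\le S_p(x_0,\Sigma)$ as in (\ref{eq:25}); combined with the identity this yields $0<S^{\perp}_p(x_0,\Sigma)\le \frac{p-1}{p+1}S_p(x_0,\Sigma)$, forcing $S_p(x_0,\Sigma)>0$ and contradicting $S_p(x_0,\Sigma)\le 0$. (Here $S^\perp_p>0$ follows from $\Ric^\perp>0$ by averaging, so the argument in fact proves vanishing under the weaker hypothesis $S_k^\perp>0$ for $p\ge k$.) So the ``delicate conversion'' you flag is resolved by sphere-averaging plus using the untraced maximum-principle inequality in both the tangential and normal directions to $\Sigma$; the second-variation machinery of \cite{Ni-Zheng2} that you gesture at is needed only later, for the rational-connectedness theorem, not here.
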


The conjecture was confirmed  for $n=2, 3, 4$ in \cite{NZ} following a general scheme of Kobayashi. As illustrated in \cite{NZ}, the  ``in particular" part, namely the simply-connectedness of compact K\"ahler manifolds,  would follow from Hirzebruch's Riemann-Roch formula \cite{Hir} as follows: Letting ${\mathcal O}_N$ be the structure sheaf,  the Euler characteristic number
$$ \chi({\mathcal O}_N) \doteqdot 1 - h^{1,0} + h^{2,0} - \cdots + (-1)^n h^{n,0}$$
satisfies that $\chi({\mathcal O}_{\widetilde{N}})=\nu \cdot \chi({\mathcal O}_N)$ by the Riemann-Roch-Hirzebruch formula,  if $\widetilde{N}$ is a finite  $\nu$-sheets covering of $N$. On the other hand, the vanishing of all Hodge numbers $h^{p, 0}$ for $1\le p\le n$ (which is the main part of the conjecture) asserts that $\chi({\mathcal O}_N) =1$ for both $N$ and $\widetilde{N}$, if $\widetilde{N}$ is compact and of $\Ric^\perp>0$ (hence projective). This  forces $\nu=1$, hence $\pi_1(N)=\{0\}$. Note that the universal cover $\widetilde{N}$ satisfies  $\Ric^\perp>\delta>0$. Hence $\widetilde{N}$ is  compact and projective by Theorem 3.2 of \cite{NZ}. This argument was the one used in \cite{Ko, NZ} proving the simply-connectedness of a Fano manifold, and for $n=2,3,4$ with $\Ric^\perp>0$.

In this paper we  prove Conjecture \ref{conj} for all $n\ge2$ by  a stronger result, namely  the  vanishing of $h^{p, 0}$ under a weaker curvature condition related to $p$. First we recall this condition (cf.  Section 4 of \cite{NZ}). Let $\Sigma$ be a  $k$-subspace $\Sigma\subset T_x'N$.
Let $\aint f (Z)\, d\theta(Z)$ denote $\frac{1}{Vol(\mathbb{S}^{2k-1})}\int_{\mathbb{S}^{2k-1}}f(Z)\, d\theta(Z)$, where $\mathbb{S}^{2k-1}$ is the unit sphere in $\Sigma$.  Define
\begin{equation}\label{eq:11}
S^{\perp}_k(x, \Sigma)\doteqdot k \aint_{Z\in \Sigma, |Z|=1} \Ric^\perp(Z, \overline{Z})\, d\theta(Z)
\end{equation}
Similarly  $S_k(x, \Sigma)$, the $k$-scalar curvature of $\Sigma$, can be  defined by  replacing $\Ric^\perp(Z, \overline{Z})$ with $\Ric(Z, \overline{Z})$ in (\ref{eq:11}). Let $S_k^{\perp}(x)\doteqdot \inf_{\Sigma} S^{\perp}_k(x, \Sigma)$.
Thus  $S_k^{\perp}(x)>0$ if and only if  for any $k$-subspace $\Sigma\subset T_x'M$, $S^{\perp}_k(x, \Sigma)>0$. The condition  $S_k^\perp(x)>0, k\in \{1, \cdots, n\}$, interpolate between $\Ric^\perp(X, \overline{X})$ and $\frac{n-1}{n+1}S(x)$ (see Lemma \ref{lm:Gold}). It is easy to see that $S_\ell^\perp>0$ implies $S^\perp_k>0$ for $k\ge \ell$. And it is not hard to prove that (cf. (\ref{eq:24}))
$$S^{\perp}_k(x, \Sigma)=\left(\Ric(E_1, \overline{E}_1)+\Ric(E_2, \overline{E}_2)+\cdots +\Ric(E_k, \overline{E}_k) \right)-\frac{2}{(k+1)}S_k(x, \Sigma).$$
 The corresponding collection of $k$-scalar curvatures $\{S_k(x),  k=1, \cdots, n\}$ interpolates between the holomorphic sectional curvature $H(X)$ and the scalar curvature $S(x)$.
The equation (\ref{eq:11}) in particular implies that $S^\perp_n(x)=\frac{n-1}{n+1}S(x).$ The first  theorem of this paper proves that $h^{p, 0}=0$, $\forall p\ge k$, if $S_k^{\perp}>0$ or $S_k>0$. Conjecture \ref{conj} follows since $\Ric^\perp>0$ implies that $S_k^\perp>0$, $\forall k \in \{1, \cdots, n\}$.

\begin{theorem}\label{thm:1}
Let $(N, g)$ be a compact K\"ahler manifolds such that $S^{\perp}_k(x)>0$ for any $x\in N$. Then
$h^{p, 0}=0$ for any $p\ge k$. The same result holds if $S_k>0$. In particular, if $\Ric^\perp>0$ (or $H>0$), then $h^{p, 0}(N)=0$ for all $1\le p\le n$, and $N$ is simply-connected.
\end{theorem}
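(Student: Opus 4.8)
The plan is to prove the vanishing $h^{p,0}=0$ directly, by showing that under the curvature hypothesis there is no nonzero holomorphic $p$-form, and to do this through a maximum principle applied not to the pointwise Hermitian norm of the form but to \emph{Whitney's comass}. Because $S_k^{\perp}>0$ forces $S_p^{\perp}>0$ for every $p\ge k$ (and likewise $S_k>0$ forces $S_p>0$), it suffices to prove, for each fixed degree $p$, the implication $S_p^{\perp}>0\Rightarrow h^{p,0}=0$ (resp.\ $S_p>0\Rightarrow h^{p,0}=0$). On a compact K\"ahler manifold any holomorphic $p$-form $\varphi$ is harmonic, so the argument is Bochner-theoretic. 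The obstruction to a naive Bochner argument is that the Weitzenb\"ock curvature operator on $\Lambda^{p,0}$ mixes all components of $\varphi$ and is not controlled by the averaged curvatures $S_p^{\perp},S_p$, which only measure a trace over one $p$-plane. The comass repairs exactly this: rather than $|\varphi|^2=\sum_I|\varphi_I|^2$, I would study
\[
u(x)=\sup_{\Sigma}\bigl|\varphi(E_1,\dots,E_p)\bigr|,
\]
the supremum over all $p$-planes $\Sigma\subset T'_xN$ with unitary frame $E_1,\dots,E_p$. This isolates a single decomposable component, whose infinitesimal curvature is precisely the Ricci-type trace along the optimal $p$-plane recorded by $S_p^{\perp}$ (resp.\ $S_p$).

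Assume $\varphi\not\equiv0$, so $u>0$ somewhere; by compactness $u$ attains its maximum at some $x_0$, where a subspace $\Sigma_0$ realizes the supremum. Since $u$ is only a supremum of smooth quantities it need not be smooth, and this is where the viscosity consideration enters. I would extend the optimal unit decomposable $(p,0)$-covector to a smooth field $\omega$ near $x_0$, chosen to agree with the optimizer at $x_0$ and to be covariantly constant there, and set $f(x)=|\langle\varphi(x),\omega(x)\rangle|$. By the definition of the comass $f\le u$ with equality at $x_0$, so $f(x)\le u(x)\le u(x_0)=f(x_0)$; thus $f$ too attains a local maximum at $x_0$, and since $f(x_0)>0$ the smooth function $\log f^2$ has a maximum there. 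The maximum principle then gives $\nabla\log f^2(x_0)=0$ and $\Delta\log f^2(x_0)\le0$.

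The core of the proof is the computation of $\Delta\log f^2$ at $x_0$. Using $\bar\partial\varphi=0$ and $\nabla\omega(x_0)=0$, the first derivatives collapse and the second-order term is produced by the second covariant derivatives of $\omega$, i.e.\ by the curvature operator acting on the decomposable covector $\omega$; tracing over all $n$ ambient directions yields the full Ricci trace $\sum_{i=1}^p\Ric(E_i,\overline{E}_i)$ along $\Sigma_0$. The crucial point is the \emph{extremality of $\Sigma_0$} together with holomorphicity: the first-order optimality in the Grassmannian cancels the cross terms mixing $\Sigma_0$ with its complement, and the diagonal contributions reorganize the Ricci trace into its orthogonal part, so that the surviving lower bound is exactly $S_p^{\perp}(x_0,\Sigma_0)$ (a parallel grouping retaining the intrinsic bisectional curvature of $\Sigma_0$ gives the $S_p$ version). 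Invoking the identity
\[
S^{\perp}_p(x_0,\Sigma_0)=\sum_{i=1}^p\Ric(E_i,\overline{E}_i)-\frac{2}{p+1}S_p(x_0,\Sigma_0)
\]
recalled above, one arrives at $\Delta\log f^2(x_0)\ge S_p^{\perp}(x_0,\Sigma_0)\ge S_p^{\perp}(x_0)>0$, contradicting $\Delta\log f^2(x_0)\le0$. Hence $\varphi\equiv0$ and $h^{p,0}=0$. I expect this extremality-plus-viscosity step to be the main obstacle: one must verify that the supremum is realized smoothly enough to run the maximum principle, and that the Grassmann variation, combined with $\bar\partial\varphi=0$, produces precisely the holomorphic-sectional-curvature correction that turns the naive Ricci trace into $S_p^{\perp}$ (resp.\ $S_p$); the remaining curvature bookkeeping is routine.

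Finally, the ``in particular'' assertions follow by specialization. Since $\Ric^{\perp}>0$ gives $S_k^{\perp}>0$ for every $k$, and $H>0$ gives $S_1=H>0$ and hence $S_k>0$ for every $k$, in either case $h^{p,0}=0$ for all $1\le p\le n$. Simple-connectedness is then the covering argument recalled before the statement: the universal cover $\widetilde N$ inherits the same strict positivity, so by Theorem~3.2 of \cite{NZ} it is compact and projective; the vanishing of all $h^{p,0}$ forces $\chi(\mathcal{O}_{\widetilde N})=1=\chi(\mathcal{O}_N)$, and the multiplicativity $\chi(\mathcal{O}_{\widetilde N})=\nu\,\chi(\mathcal{O}_N)$ of the Riemann--Roch--Hirzebruch Euler characteristic forces the number of sheets $\nu=1$, i.e.\ $\pi_1(N)=\{0\}$.
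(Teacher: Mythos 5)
Your global architecture is the same as the paper's (comass as the maximized quantity, a smooth simple barrier touching it from below at the maximum point, a maximum principle, the averaging identity for $S_p^{\perp}$, and the Riemann--Roch--Hirzebruch covering argument for $\pi_1$), but the core curvature step has a genuine gap: you invoke only the \emph{traced} inequality $\Delta\log f^2(x_0)\le 0$. The decisive point in the paper's proof is directional. There the barrier is $\|\widetilde{\phi}\|^2=|a_{1\cdots p}|^2/\det(g_{\alpha\bar\beta})_{1\le\alpha,\beta\le p}$ in normal coordinates, whose logarithm has pluriharmonic numerator since $a_{1\cdots p}$ is holomorphic; at the interior maximum the full complex Hessian of $\log\|\widetilde{\phi}\|^2$ is negative semidefinite, so for \emph{every single} $v\in T'_{x_0}N$ one gets $\sum_{j=1}^p R_{v\bar v j\bar j}\le 0$, i.e.\ \eqref{eq:23}. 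This per-direction inequality is then used twice: with $v$ ranging over a frame of $\Sigma$ it gives $S_p(x_0,\Sigma)\le 0$; with $v=\partial/\partial z^{\ell}$, $\ell>p$, it gives $\sum_{i\le p}\Ric_{i\bar i}\le S_p(x_0,\Sigma)$ as in \eqref{eq:25}; combined with the identity \eqref{eq:24}, $S^{\perp}_p=\sum_{i\le p}\Ric_{i\bar i}-\frac{2}{p+1}S_p$, one gets $0<S^{\perp}_p(x_0,\Sigma)\le\frac{p-1}{p+1}S_p(x_0,\Sigma)$ as in \eqref{eq:26}, contradicting $S_p\le 0$. If you trace first, you retain only $\sum_{i\le p}\Ric(E_i,\overline{E}_i)(x_0)\le 0$, and together with $S^{\perp}_p>0$ this yields merely $S_p(x_0,\Sigma_0)<0$ --- no contradiction; in the $S_k>0$ case the traced inequality does not contradict $S_p>0$ at all, since the Ricci trace involves directions outside $\Sigma_0$ that may be very negative.

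The bridge you propose --- that first-order optimality of $\Sigma_0$ in the Grassmannian ``reorganizes the Ricci trace into its orthogonal part,'' i.e.\ $\Delta\log f^2(x_0)\ge S^{\perp}_p(x_0,\Sigma_0)$ --- is not only unproved but points the wrong way: with a correctly chosen extension one has $\Delta\log f^2(x_0)=\sum_{i\le p}\Ric_{i\bar i}=S^{\perp}_p+\frac{2}{p+1}S_p$, and at the maximum point the directional argument forces $S_p\le 0$, so in fact $\Delta\log f^2(x_0)\le S^{\perp}_p(x_0,\Sigma_0)$. First variation of the optimal plane only yields algebraic information on $\varphi$ (vanishing at $x_0$ of components with exactly one index outside $\Sigma_0$); it produces no curvature rearrangement, and the paper's proof of this theorem uses no Grassmannian variation at all --- that tool enters only later, in the second-variation estimates of Proposition \ref{prop:31} used for Theorem \ref{thm:61}. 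A secondary, fixable point: prescribing $\nabla\omega(x_0)=0$ does not determine $\nabla_{\bar v}\nabla_v\omega(x_0)$, so ``the second-order term is produced by curvature'' requires specifying the second-order jet of the extension; the paper sidesteps this by its explicit barrier with holomorphic numerator, so no derivatives of $\varphi$ or of an abstract frame extension appear after taking $\log$. The repair of your argument is short: keep your barrier $f$ (realized concretely as $\|\widetilde{\phi}\|$), record the negative semidefiniteness of $\partial_v\partial_{\bar v}\log f^2$ at $x_0$ for each $v$ separately rather than its trace, and then run the two directional substitutions and the identity \eqref{eq:24} as above; your reduction $S^{\perp}_k>0\Rightarrow S^{\perp}_p>0$ for $p\ge k$ and your concluding covering argument for simple-connectedness are exactly as in the paper.
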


The  part $h^{2, 0}=0$ was proved in \cite{NZ, Ni-Zheng2} under the assumption $S_2(x)>0$.  The argument there is limited to $p=2$ since one has to use a normal form for $(2,0)$-forms. The proof here uses  a different  idea which is developed recently in  \cite{N} to  prove a new Schwarz Lemma by the author. We recall that idea first before explaining the related details. Starting from the work of Ahlfors, the Schwarz Lemma concerns estimating the gradient of a holomorphic map $f$ between two K\"ahler (or Hermitian) manifolds $(M^m, h)$ and $(N^n, g)$. For that it is instrumental to study the pull-back $(1,1)$-form $f^*\omega_g$, where $\omega_g$ is the K\"ahler form of $(N, g)$. The traditional approach (before the work of \cite{N}) is to compute the Laplacian of the trace of $f^*\omega_g$. But in \cite{N}, the author estimated the largest singular value of $df$, equivalently the biggest eigenvalue of $f^*\omega_g$, by applying the $\partial\bar{\partial}$-operator  to the maximum eigenvalue of $f^*\omega_g$ (which is only continuous in general) via a viscosity consideration. It allows the author to prove another {\it natural} generalization of Ahlfors' result with a sharp estimate on the largest singular value of $df$ in terms of the holomorphic sectional curvatures of both the domain and target manifolds. This estimate can be viewed as a complex version of Pogorelov's estimate for solutions of the Monge-Amp\`ere equation \cite{Po}. To prove the vanishing of holomorphic $(p, 0)$-forms under the assumption of $\Ric^\perp>0$, in Section 3 we apply the $\partial \bar{\partial}$-operator  on the {\it comass} of holomorphic $(p, 0)$-forms (cf. \cite{Fede, Whit}), through a similar viscosity consideration. The comass of a $(p, 0)$ form generalizes  the biggest singular value of $df$ in some sense since one can view $df$ as a vector valued $(1, 0)$-form. It can be done thanks to some basic properties of the comass established by  Whitney \cite{Whit}. This new idea also allows us to prove a generalization of the main theorem in \cite{Ni-Zheng2} (cf.   Corollary \ref{coro:6-add-3}).

  By combining this new idea with the work of \cite{Ni-Zheng2}, in Section 4, we prove the projectivity and the rational connectedness of compact K\"ahler  manifolds under the condition  $\Ric_k>0$, i.e. Theorem \ref{thm:4}. The notion $\Ric_k$ was introduced in \cite{N} to prove that {\it any  K\"ahler manifold with $\Ric_k<0$ uniformly must be $k$-hyperbolic}, a concept generalizing the Kobayashi hyperbolicity (which amounts to $1$-hyperbolic).  Let $\Ric_k(x)\doteqdot \inf_{\Sigma, v\in \Sigma, |v|=1} \Ric_k(x, \Sigma) (v, \bar{v})$. Here $\Ric_k(x, \Sigma)$ is the Ricci curvature of the curvature tensor $R$ restricted to $\Sigma\subset T_xN$.
  Thus $\Ric_k(x)>\lambda(x)$  if and only if $\Ric_k(x, \Sigma)(v, \bar{v})>\lambda |v|^2$, for any $v\in \Sigma$ and  for every $k$-dimensional subspace $\Sigma$.  We say $N$ has positive $\Ric_k$ if $\Ric_k(x)>0, \forall x\in N$. The condition $\Ric_1>0$  is equivalent to that the holomorphic sectional curvature $H>0$. For $k=n$, $\Ric_k$ is  the Ricci curvature. By \cite{Hitchin, AHZ} $\Ric_k>0$ is independent from $\Ric_\ell>0$ for $k\ne \ell$ (cf. also \cite{YZ, NZ} for more examples). The known examples of manifolds with $\Ric_k>0$ for $k\ne n$ contain mostly {\it non-Fano} manifolds. It remains interesting to find out for what $k$ and $n$ the deformation types of manifolds with $\Ric_k>0$  is finite.

   As in \cite{NZ,Ni-Zheng2} the projectivity only needs $h^{2, 0}=0$.   In  Theorem \ref{thm:61-add} we show  a stronger vanishing result: {\it $h^{p,0}=0$ for any $1\le p\le n$ under the assumption $\Ric_k>0$ for some $1\le k\le n$}. (In Theorem \ref{thm:6-added-ricciperp} this result is extended to $\Ric^{\perp}_k>0$ and $\Ric^+_k>0$). The rational connectedness is proved in Section 4 by  another vanishing theorem, whose validity is  a criterion of the rational connectedness, thanks to \cite{CDP}. Both the second variation estimate from \cite{Ni-Zheng2} and the one utilizing the comass for $(p, 0)$-forms introduced in Section 3 of this paper are crucial in proving these two vanishing theorems. Theorem \ref{thm:4} generalizes both  the result for  Fano manifolds \cite{Cam, KMM} (the case $k=n$, namely the Fano case of Campana, Koll\'ar-Miyaoka-Mori), and the more recent result for the compact K\"ahler  manifolds with positive holomorphic sectional curvature \cite{HeierWong} by Heier-Wong (cf. also \cite{XYang} for the projectivity for the case $k=1$), since $\Ric_1>0$ amounts to $H>0$ and $\Ric_n=\Ric$. It is not clear if  $\Ric_k>0$ has anything to do with that Ricci curvature is $k$-positive in general. When $k=1$, Hitchin's examples show that they are independent. However $\Ric_k$ is related to the notion of  $q$-Ricci studied in Riemannian geometry which interpolates the Ricci and the sectional curvature. In particular, it is the complex analogue of $q$-Ricci and if the  $(2k-1)$-Ricci is positive in the sense of Bishop-Wu \cite{Bishop, Wu} then  $\Ric_k>0$. The positivity of the $(2k-1)$-Ricci is a much stronger condition than $\Ric_k>0$ since it  requires the Ricci being positive  on all $2k$-dimensional subspaces of the (complexified) tangent space $T_xN$  $\forall x\in N$. This makes that the positivity of the $p$-Ricci implies the positivity of $q$-Ricci if $q\ge p$. On the other hand since most of $2k$-dimensional (real) subspaces of $T_xN\otimes \mathbb{C}$ are  neither invariant under the almost complex structure, nor  subspaces of $T'N$, $\Ric_k>0$ is a lot weaker than $(2k-1)$-Ricci being positive. A major difference is that $\Ric_k> 0$ does not imply $\Ric_{k+1}> 0$, unlike the $q$-Ricci positivity condition.

In Section 5 of the paper we study the question when a compact K\"ahler manifold with $\Ric^\perp>0$ is Fano, a question raised in \cite{NZ}. We give an affirmative answer under an extra assumption.

\begin{theorem} \label{thm:c1} Let $(N, h)$ be a compact K\"ahler manifold of complex dimension $n$.
 Then  (i) if $\Ric^\perp$ is quasi-positive (namely $\Ric^{\perp}\ge 0$ everywhere and $\Ric^{\perp}>0$ somewhere)  and the Picard number $\rho(N)=1$, then $N$ must be Fano;  (ii) if $\Ric^\perp$ is quasi-negative and $h^{1,1}(N)=1$, $N$ must be projective with ample canonical line bundle $K_N$. In particular in the case (i) $N$ admits a K\"ahler metric with positive Ricci, and in the case of (ii) $N$ admits a K\"ahler-Einstein metric with negative Einstein constant.
\end{theorem}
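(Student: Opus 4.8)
The plan is to reduce both statements to the sign of $c_1(N)$ paired against the K\"ahler class, a quantity controlled by the total scalar curvature, and then to use the rank-one hypothesis to upgrade that numerical (in)equality to (anti)ampleness of a line bundle. The starting observation is the averaging identity behind (\ref{eq:11}): taking $k=n$ and $\Sigma=T_x'N$ one has
\[
S^{\perp}_n(x)=n\aint_{|Z|=1}\Ric^\perp(Z,\overline Z)\,d\theta(Z)=\frac{n-1}{n+1}\,S(x),
\]
so the sign of the average of $\Ric^\perp$ over \emph{all} unit directions is exactly the sign of $S(x)$. For (i), quasi-positivity of $\Ric^\perp$ gives $S^\perp_n\ge 0$ on $N$; since $\Ric^\perp>0$ at one point in one direction, continuity makes $\Ric^\perp$, and hence $S^\perp_n$, positive on a nonempty open set. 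Integrating, $\int_N S\,dV=\frac{n+1}{n-1}\int_N S^\perp_n\,dV>0$. As the Ricci form of $\omega_h$ represents $2\pi c_1(N)$ and its wedge with $\omega_h^{\,n-1}$ is a positive multiple of $S\,\omega_h^{\,n}$, this is equivalent to the strict inequality $c_1(N)\cdot[\omega_h]^{n-1}>0$. For (ii) the identical computation with quasi-negative $\Ric^\perp$ yields $\int_N S\,dV<0$ and $c_1(N)\cdot[\omega_h]^{n-1}<0$.

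Next I would convert these numerical facts into positivity of $\pm K_N$ using the rank-one hypotheses. In case (ii), $h^{1,1}(N)=1$ forces $H^{1,1}(N,\mathbb{R})=\mathbb{R}\,[\omega_h]$, so $c_1(N)=\lambda[\omega_h]$ for some $\lambda\in\mathbb{R}$; pairing against $[\omega_h]^{n-1}$ and using $c_1(N)\cdot[\omega_h]^{n-1}<0$ together with $[\omega_h]^n>0$ gives $\lambda<0$. Then $c_1(K_N)=-\lambda[\omega_h]$ is an integral class represented by a K\"ahler form, so by Kodaira's embedding theorem $K_N$ is ample and $N$ is projective; the Aubin--Yau theorem then furnishes a K\"ahler--Einstein metric with negative Einstein constant. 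In case (i) I would first record that $N$ is projective: when $\Ric^\perp>0$ this is Theorem 3.2 of \cite{NZ}, and in the quasi-positive regime it follows by running the comass/viscosity argument of Theorem \ref{thm:1} together with the strong maximum principle to obtain $h^{2,0}=0$, whence $[\omega_h]$ is approximable by integral classes. With $N$ projective and $\rho(N)=1$, the N\'eron--Severi group $\operatorname{NS}(N)$ has rank one and its ample cone is a single ray, generated by an ample class $A$; writing $c_1(N)=qA$ and noting $A\cdot[\omega_h]^{n-1}=\int_N\alpha\wedge\omega_h^{\,n-1}>0$ for a positive representative $\alpha$ of $A$, the inequality $c_1(N)\cdot[\omega_h]^{n-1}>0$ forces $q>0$. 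Thus $-K_N=c_1(N)$ is ample and $N$ is Fano; Yau's solution of the Calabi conjecture, prescribing the Ricci form within the positive class $c_1(N)$, then produces a K\"ahler metric of positive Ricci curvature.

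The averaging identity and the cohomological translation are routine. The main obstacle is the passage from the merely quasi-definite pointwise condition on $\Ric^\perp$---which, not arising from a Hermitian sesquilinear form, is not itself a curvature-operator positivity---to a \emph{strict} integral inequality and then to the sign of $q$ (respectively $\lambda$). The rank-one hypothesis is doing essential work here: for general $h^{1,1}$ or $\rho$, a class pairing positively with a single $[\omega_h]^{n-1}$ need not lie in the ample cone, so one genuinely needs $\operatorname{NS}(N)$ (respectively $H^{1,1}(N)$) to be one-dimensional in order to deduce (anti)ampleness from the numerical sign. A secondary delicate point in (i) is the projectivity step, namely extending the vanishing $h^{2,0}=0$ of Theorem \ref{thm:1} to the quasi-positive case via the strong maximum principle so that the N\'eron--Severi and ample-cone discussion is legitimate; once that is in place, both parts run in parallel through the scalar-curvature integral.
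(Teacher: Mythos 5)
Your core argument coincides with the paper's own proof: the Berger-type averaging identity (Lemma \ref{lm:Gold}), the pointwise identity $c_1(N)\wedge\omega_h^{n-1}=\frac{1}{n}S\,\omega_h^n$, the resulting sign of the degree $d(K_N^{-1})=\int_N c_1(N)\wedge\omega_h^{n-1}$, the rank-one hypothesis converting this into $c_1(N)=\ell\,[\omega_h]$ with the appropriate sign of $\ell$, and the final appeal to Kodaira and to Yau/Aubin--Yau. Your case (ii) is the paper's proof essentially verbatim.

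There is, however, a genuine gap in your case (i), precisely at the step you yourself flag as delicate: the claim that projectivity in the quasi-positive regime ``follows by running the comass/viscosity argument of Theorem \ref{thm:1} together with the strong maximum principle to obtain $h^{2,0}=0$.'' This does not work as stated. The comass argument is a one-point viscosity argument: the curvature inequality (\ref{eq:23}), hence $S_p(x_0,\Sigma)\le 0$, is obtained only at the maximum point $x_0$ of the comass, in a normal frame adapted to $x_0$. Under mere quasi-positivity of $\Ric^\perp$ the chain (\ref{eq:26}) degenerates to
$0\le S^{\perp}_p(x_0,\Sigma)\le \frac{p-1}{p+1}S_p(x_0,\Sigma)\le 0$,
a string of equalities rather than a contradiction, because nothing forces $x_0$ to lie in the open set where $\Ric^\perp>0$. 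Nor is there a strong maximum principle available to propagate anything: $\|\phi\|_0$ is only continuous, and the smooth local surrogate $\widetilde{\phi}$ satisfies the relevant differential inequality only at $x_0$, not on a neighborhood, so the standard hypothesis of a strong maximum principle (a differential inequality valid on an open set) is absent. Note that the paper never takes this route: its proof of (i) uses no vanishing theorem and no prior projectivity. It reads the rank-one hypothesis on $H^{1,1}$ (``when $h^{1,1}(N)=1$, $[c_1(N)]=\ell[\omega_h]$''), derives $\ell>0$ from the degree, and then projectivity is an \emph{output} of Kodaira's embedding theorem applied to the integral K\"ahler class $c_1(N)$, not an input; the identification $\rho(N)=1\Leftrightarrow b_2=1$ is quoted from \cite{NZ} under strict positivity $\Ric^\perp>0$. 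Your N\'eron--Severi/ample-cone detour is correct once projectivity is known, but since it \emph{requires} projectivity (an ample generator of $\operatorname{NS}(N)$) as input, your quasi-positive case (i) is circular as written. To repair it, either read the hypothesis as $h^{1,1}(N)=1$ and follow the paper's direct route, or supply an actual proof of $h^{2,0}=0$ (equivalently projectivity) under quasi-positive $\Ric^\perp$, which the one-point comass argument does not give.
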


Since it was proved in \cite{NZ} that $N$ is projective and $h^{1, 0}(N)=h^{2, 0}(N)=0=h^{0, 2}(N)=h^{0,1}(N)$ under the assumption that $\Ric^\perp>0$, the assumption of $\rho(N)=1$ for case (i) is equivalent to the assumption that the second Betti number $b_2=1$. We should mention that in \cite{NWZ}, it has been shown that for all K\"ahler $C$-spaces of classical type with $b_2=1$ the canonical K\"ahler-Einstein metric satisfies $\Ric^\perp>0$.

To put Theorem \ref{thm:c1}  into  perspectives  it is appropriate to recall some  earlier works. First related to $\Ric^\perp\ge 0$  there exists a stronger condition called the {\it nonnegative quadratic orthogonal bisectional sectional curvature},   studied by various people including authors of \cite{WYZ} and  \cite{CT},  etc. The
{\it quadratic orthogonal bisectional curvature}  (abbreviated as QB), is defined for any real vector $\vec{{\bf a}}=(a_1, \cdots, a_n)^{tr}$ and any unitary
 frame $\{E_i\}$ of $T'_xN$,
QB$(\vec{{\bf a}})=\sum_{i, j}R_{i\bar{i}j\bar{j}}(a_i-a_j)^2$.  In \cite{NT2} it was  formulated invariantly as a quadratic form  on the space of Hermitian symmetric tensors. Precisely for symmetric tensor $A$,
$
QB_R(A)\doteqdot \langle R, A^2\bar{\wedge}id -A\bar{\wedge}A\rangle.
$
Interested readers can refer to \cite{NT2} for the notations involved.  For   any  unitary orthogonal frame of $T'N$   $\{E_\alpha\}$ we adapt
\begin{equation}\label{eq:12}
QB_R(A)\doteqdot \sum_{\alpha, \beta=1}^nR(A(E_\alpha), \overline{A(E_\alpha)}, E_\beta, \overline{E}_\beta)-R(E_\alpha, \overline{E}_\beta, A(E_\beta), \overline{A(E_\alpha)}).
\end{equation}
  Clearly it is independent of the choice of the unitary frame.
Its nonnegativity, abbreviated as NQOB, is equivalent to that QB$(\vec{{\bf a}})\geq 0$ for any $\vec{{\bf a}}$ with respect to any unitary frame $\{ E_i\}$. NQOB was formally introduced in \cite{WYZ} (appeared implicitly in the work of Bishop-Goldberg in 1960s). It is easy to see that QB$>0$ implies $\Ric^{\perp}>0$.\footnote{Motivated by the work of Calabi-Vesentini \cite{CV},   we  introduce the so-called {\it cross quadratic bisectional curvature} (abbreviated as CQB), another (quadratic form type) curvature, whose positivity also implies $\Ric^\perp>0$.} In \cite{CT}  the following was proved  Chau-Tam in \cite{CT}, Theorem 4.1:

\begin{theorem}[Chau-Tam]\label{thm:CT} Let $(N, h)$ be a compact K\"ahler manifold with NQOB and  $h^{1,1}(N)=1$. Assume further that  $N$ is locally irreducible then $c_1(M)>0$.
\end{theorem}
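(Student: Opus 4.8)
The plan is to run the K\"ahler--Ricci flow to promote the nonnegativity hypothesis NQOB to strict positivity, and then to invoke the Fano criterion already recorded as Theorem \ref{thm:c1}(i). Starting from $h$, I would consider the unnormalized flow $\p_t g=-\Ric$, which exists on some interval $[0,\varepsilon)$ by compactness of $N$. Along the flow the complex structure is fixed, so the Hodge number $h^{1,1}(N)$ and the class $c_1(N)$ are unchanged; it therefore suffices to exhibit one time $t_0\in(0,\varepsilon)$ at which $g(t_0)$ is Fano. The argument proceeds in three steps.

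\textbf{Step 1 (preservation of NQOB).} Writing the condition invariantly as $QB_R(A)\ge 0$ for every Hermitian symmetric tensor $A$ (equivalently, (\ref{eq:12}) is nonnegative for every unitary frame and every $\vec{{\bf a}}$), I would first show that this cone is preserved by the flow. By Hamilton's maximum principle for the curvature operator this reduces to checking that the cone $\{QB_R\ge 0\}$ is invariant under the associated reaction ODE $\tfrac{d}{dt}R=Q(R)$: at a curvature operator on the boundary of the cone, i.e. with $QB_R(A_0)=0$ for some $A_0\ne 0$, one must verify $\tfrac{d}{dt}QB_R(A_0)\ge 0$.

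\textbf{Step 2 (improvement by irreducibility).} With NQOB preserved, I would apply Hamilton's strong maximum principle for systems. For $t>0$ the null set $\{A:\,QB_{R(t)}(A)=0\}$, which always contains the trivial direction $\vec{{\bf a}}\propto\vec 1$, becomes a parallel, holonomy-invariant subbundle. Were this null set strictly larger than the trivial one, the holonomy of $g(t)$ would reduce and, by the de Rham decomposition theorem, a finite cover of $N$ would split as a nontrivial K\"ahler product, contradicting local irreducibility. Hence $QB_{R(t)}>0$ for every $t>0$.

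\textbf{Step 3 (conclusion).} Since $QB>0$ implies $\Ric^\perp>0$, the metric $g(t_0)$ satisfies $\Ric^\perp>0$; by the projectivity theorem for $\Ric^\perp>0$, $N$ is projective, and then $\rho(N)\le h^{1,1}(N)=1$ forces the Picard number $\rho(N)=1$. Theorem \ref{thm:c1}(i) now yields $c_1(N)>0$, i.e. $N$ is Fano, and since $c_1$ is unchanged along the flow the same holds for the original $(N,h)$. The main obstacle is Step 1: the algebraic verification that the QB-cone is invariant under the curvature reaction term is the delicate point, the null-tensor holonomy argument of Step 2 and the reduction in Step 3 being standard once preservation and the strong maximum principle are in hand.
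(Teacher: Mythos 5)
Your plan is, in essence, the original flow-based strategy of Chau--Tam in \cite{CT}, but as written it has two genuine gaps, and it is in any case much heavier than what the paper actually does. First, your Step 1 is not a reduction but the entire technical content of the approach: the invariance of the convex cone $\{QB_R\ge 0\}$ under the curvature reaction ODE is a substantial computation with the K\"ahler reaction term (it is, in effect, a main theorem of \cite{CT} itself), and you give no indication of why $\tfrac{d}{dt}QB_R(A_0)\ge 0$ at a boundary tensor $A_0$; deferring this as ``the delicate point'' leaves the proof without its engine. Second, Step 2 does not follow from Hamilton's strong maximum principle as stated. The null set $\{A: QB_{R(t)}(A)=0\}$ is a cone of Hermitian symmetric tensors, not a distribution in $TN$, so ``parallel, holonomy-invariant subbundle'' requires a Brendle--Schoen-type argument tailored to this quadratic form, plus a mechanism converting a nontrivial null tensor into a parallel symmetric $(1,1)$-tensor before de Rham can be invoked. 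Worse, the contradiction is aimed at the wrong metric: local irreducibility is hypothesized for $h=g(0)$ and is not a priori preserved along the flow, so a local splitting of $g(t)$ for $t>0$ does not contradict the hypothesis; the contradiction must instead be run against $h^{1,1}(N)=1$ (a splitting would produce two independent parallel $(1,1)$-forms, modulo monodromy issues that also need care).

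By contrast, the paper proves a strictly stronger statement (Corollary \ref{coro:11}, with NQOB weakened to $\Ric^\perp\ge 0$) with no flow at all, and your Step 3 already contains all the tools it needs. Since NQOB gives $\Ric^\perp\ge 0$ directly at $t=0$ (take $\vec{{\bf a}}=(1,0,\dots,0)$ in (\ref{eq:12})), Berger's averaging identity (Lemma \ref{lm:Gold}) yields $S\ge 0$ pointwise; $h^{1,1}(N)=1$ forces $[c_1(N)]=\ell[\omega_h]$, and the degree computation (\ref{eq:42})--(\ref{eq:43}) gives $\ell\, V(N)=\frac1n\int_N S\,\omega_h^n\ge 0$. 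If $\ell=0$ then $S\equiv 0$, hence $\Ric^\perp\equiv 0$ by the averaging identity again, and Theorem 6.1 of \cite{NWZ} forces $N$ to be flat or locally a product, contradicting local irreducibility; so $\ell>0$, i.e.\ $c_1(N)>0$, with a positive-Ricci metric then supplied by \cite{Yau}. So where your proposal would need the preservation theorem, a refined strong maximum principle, and a holonomy argument, the paper's route replaces all three steps by one integral identity plus the rigidity of $\Ric^\perp\equiv 0$. If you want to salvage the flow approach, you must actually prove Step 1 (or cite \cite{CT} for it, which would make the argument circular as a proof of their theorem) and rework Step 2 so the contradiction targets $h^{1,1}=1$ rather than the irreducibility of the initial metric.
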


Theorem \ref{thm:c1} has the following corollary, which extends the above result.

\begin{corollary}\label{coro:11} Let $(N, h)$ be a compact K\"ahler manifold of complex dimension $n$ with $\Ric^{\perp}\ge 0$.
 Assume further that $h^{1, 1}(N)=1$ and $N$ is locally irreducible.  Then $c_1(N)>0$, namely $N$ is Fano. A similar result holds under the assumption $\Ric^\perp \le 0$.
\end{corollary}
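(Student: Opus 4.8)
The plan is to reduce to Theorem \ref{thm:c1}(i), but it is cleanest to recover the Fano conclusion directly and read off its hypotheses along the way. The key point is that $h^{1,1}(N)=1$ rigidifies the first Chern class. Indeed the K\"ahler class $[\omega]$ is a nonzero element of $H^{1,1}(N,\mathbb{R})$, so $H^{1,1}(N,\mathbb{R})=\mathbb{R}[\omega]$; and since $c_1(N)$ is represented by the Ricci form of $h$ it lies in $H^{1,1}(N,\mathbb{R})$, whence $c_1(N)=\lambda[\omega]$ for a unique $\lambda\in\mathbb{R}$. As $[\omega]$ is a K\"ahler class, $c_1(N)>0$ (i.e.\ $N$ is Fano) as soon as $\lambda>0$, and $c_1(N)<0$ (i.e.\ $K_N$ ample) as soon as $\lambda<0$. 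So the whole problem is to determine the sign of $\lambda$.

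To do this I would pair with $[\omega]^{n-1}$. Since $c_1(N)\cdot[\omega]^{n-1}$ is a fixed positive multiple of $\int_N S\,\omega^n$ and $[\omega]^n>0$, the relation $\lambda[\omega]^n=c_1(N)\cdot[\omega]^{n-1}$ gives $\operatorname{sign}(\lambda)=\operatorname{sign}\!\big(\int_N S\,dV\big)$. Now I invoke the identity $S^\perp_n(x)=\frac{n-1}{n+1}S(x)$ recorded after (\ref{eq:11}): by (\ref{eq:11}) the value $S^\perp_n(x)$ is $n$ times the average of $\Ric^\perp(Z,\overline{Z})$ over the unit sphere of $T_x'N$, so $\Ric^\perp\ge 0$ forces $S(x)\ge 0$ everywhere, hence $\int_N S\,dV\ge 0$ and $\lambda\ge 0$. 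If $\lambda>0$ we are done: $N$ is Fano, and then $N$ is projective with $h^{2,0}=0$, so $b_2=h^{1,1}=1$ and $\rho(N)=1$, which is exactly consistent with Theorem \ref{thm:c1}(i). The case $\Ric^\perp\le 0$ is handled symmetrically: then $S\le 0$, $\lambda\le 0$, and $\lambda<0$ yields the ample canonical bundle conclusion of Theorem \ref{thm:c1}(ii).

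The main obstacle is the borderline case $\lambda=0$. Here $\int_N S\,dV=0$ with $S\ge 0$ forces $S\equiv 0$, hence $S^\perp_n\equiv 0$; and since $\Ric^\perp(Z,\overline{Z})\ge 0$ has vanishing spherical average at every point, we must have $\Ric^\perp\equiv 0$ identically, so that $\Ric^\perp$ fails to be quasi-positive. I would rule this out using local irreducibility. One way: $\lambda=0$ gives $c_1(N)=0$ in $H^2(N,\mathbb{R})$, so by the Beauville--Bogomolov decomposition a finite \'etale cover of $N$ is a product of a complex torus, Calabi--Yau, and hyperk\"ahler factors; local irreducibility excludes a torus factor and any nontrivial product, leaving a single irreducible factor, on which one then contradicts $\Ric^\perp\equiv 0$. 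Alternatively, and more in the spirit of the companion results, one argues directly from the curvature: $\Ric^\perp\equiv 0$ means $\Ric(X,\overline{X})=H(X)$ for every unit $X$, and feeding this degeneracy into the Bochner curvature operator whose holomorphic sectional curvature is $\Ric^\perp$ should force a parallel splitting of $T'N$, i.e.\ local reducibility, the desired contradiction. Making this splitting precise (equivalently, proving that $\Ric^\perp\equiv 0$ is incompatible with irreducible holonomy for $n\ge 2$) is where the real work lies; it is the exact analogue of the role played by local irreducibility in Chau--Tam's Theorem \ref{thm:CT}. Once $\lambda=0$ is excluded the corollary follows, either directly as above or by quoting Theorem \ref{thm:c1}.
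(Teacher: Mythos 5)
Your reduction is exactly the paper's: with $h^{1,1}(N)=1$ write $c_1(N)=\lambda[\omega_h]$, determine $\operatorname{sign}(\lambda)$ by pairing with $[\omega_h]^{n-1}$ (i.e.\ by $\int_N S\,dV$), use Lemma \ref{lm:Gold} (equivalently $S^\perp_n=\frac{n-1}{n+1}S$) to get $S\ge 0$ from $\Ric^\perp\ge 0$, and observe that the borderline case $\lambda=0$ forces $S\equiv 0$ and hence, since the spherical average of the nonnegative function $\Ric^\perp(Z,\overline{Z})$ vanishes at every point, $\Ric^\perp\equiv 0$. All of that is correct and is precisely the argument in Section 5. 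But the proof has a genuine gap at the one step where local irreducibility must enter: you never actually prove that $\Ric^\perp\equiv 0$ is incompatible with local irreducibility for $n\ge 2$. The paper closes this by quoting Theorem 6.1 of \cite{NWZ}, which classifies metrics with $\Ric^\perp\equiv 0$: $N$ is flat for $n\ge 3$, and for $n=2$ it is flat or locally a product — in all cases locally reducible, contradiction. You neither cite such a classification nor supply a substitute, and you say yourself that ``making this splitting precise \dots is where the real work lies.'' An admitted placeholder at the only nontrivial step means the proof is incomplete.

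Neither of your two proposed repairs works as stated. For the Beauville--Bogomolov route: $\lambda=0$ gives $c_1(N)=0$, and Yau then produces a Ricci-flat metric in the class $[\omega_h]$ — but the decomposition theorem applies to that auxiliary Ricci-flat metric, while the local irreducibility hypothesis concerns the given metric $h$. Irreducibility of the holonomy of $h$ does not exclude, say, a strict Calabi--Yau factor (indeed a simply connected $N$ with a single irreducible factor passes your test), so after the decomposition you are still left with exactly the unproven claim that an irreducible factor cannot carry a metric with $\Ric^\perp\equiv 0$ — the full content of the NWZ theorem. For the second route, the heuristic that feeding $\Ric(X,\overline{X})=H(X)$ into the Bochner curvature operator ``should force a parallel splitting'' is plausible but is not an argument; note that in dimension $n=2$ the correct conclusion is only ``flat or locally a product,'' so any such splitting argument must be dimension-sensitive, and the actual proof in \cite{NWZ} is a nontrivial pointwise curvature analysis, not a formal Bochner computation. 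In short: your skeleton matches the paper's, but the corollary stands or falls on the rigidity statement for $\Ric^\perp\equiv 0$, and that is missing.
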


 There exists compact K\"ahler manifolds with $b_2>1$ (cf. construction in \cite{NWZ} via projectivized bundles) and $\Ric^\perp>0$. Hence it remains an interesting question whether or not  the same conclusion of Theorem \ref{thm:c1} (i)  holds without the assumption $h^{1,1}=1$. Since the quasi-positivity of  QB implies that $h^{1,1}=1$, as a consequence we have that {\it any compact K\"ahler manifold with quasipositive QB must be Fano}.
  Whether or not the same conclusion of part (ii) of Theorem \ref{thm:c1}  holds without assuming that $h^{1, 1}=1$ remains open.

As mentioned in Section 1, motivated by the relation of the condition $\Ric^\perp>0$  with the generalized Hartshone conjecture and the work of Calabi-Vesentini we introduce the {\it cross quadratic bisectional curvature} (abbreviated as  CQB)  as a Hermitian quadratic form on the space of linear maps $A: T''_xN \to T'_xN$:
\begin{equation}\label{eq:cqb-define}
CQB_R(A)\doteqdot \sum_{\alpha, \beta=1}^nR(A(\overline{E}_\alpha), \overline{A(\overline{E}_\alpha)}, E_\beta, \overline{E}_\beta)-R(E_\alpha, \overline{E}_\beta, A(\overline{E}_\alpha), \overline{A(\overline{E}_\beta)}).
\end{equation}
Here   $\{E_\alpha\}$ is a unitary frame of $T'_xN$. It is easy to see that CQB$(A)$ is independent of the choice of the unitary frame. Dually, the {\em dual cross quadratic bisectional curvature} ($^d$CQB) is defined as a Hermitian quadratic form on linear maps $A: T'N \rightarrow T''N $:
\begin{equation}\label{eq:dcqb}
 ^d\mbox{CQB}_R(A) \doteqdot \sum_{\alpha , \beta =1}^n  R(\overline{A(E_{\alpha})}, A(E_{\alpha} ), E_{\beta} , \overline{E}_{\beta} ) + R(E_{\alpha} , \overline{E}_{\beta}, \overline{  A(E_{\alpha} )} , A(E_{\beta} ) ). \end{equation}

The advantage of these two curvature notions over QB is demonstrated by

\begin{theorem}\label{thm:dr-cv1} (i) Let $N^n$ be a classical K\"ahler C-space with $n\geq 2$ and $b_2=1$ Then  the canonical metric satisfies CQB$>0$ and $^d$CQB$>0$.

(ii) For a compact K\"ahler manifolds with quasi-positive $^d$CQB (or quasi-negative CQB),
$H^q(N, T'N)=\{0\}$, for $1\le q\le n$, and $N$ is deformation rigid in the sense that it does not admit nontrivial infinitesimal holomorphic deformation. In particular the deformation rigidity holds for all classical K\"ahler C-spaces with $b_2=1$.
\end{theorem}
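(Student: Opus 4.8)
The plan is to handle the two parts by quite different methods: part (i) is a representation/root-theoretic computation carried out at a single point of a homogeneous space, while part (ii) is a Bochner-type vanishing argument in which the relevant curvature term is, essentially by design, the $^d$CQB (resp. CQB) quadratic form.

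For (i), I would first use the homogeneity of the canonical (K\"ahler--Einstein) metric to reduce $\mbox{CQB}_R(A)>0$ and $^d\mbox{CQB}_R(A)>0$ to the single base point $o=eK$, so that it suffices to verify positivity of two explicit Hermitian quadratic forms on the finite-dimensional spaces $\mathrm{Hom}(T''_oN,T'_oN)$ and $\mathrm{Hom}(T'_oN,T''_oN)$. The curvature tensor $R$ of a K\"ahler C-space at $o$ is encoded in Lie-algebraic data: decomposing the isotropy representation into root spaces $\mathfrak g_\gamma$ indexed by the non-compact positive roots $\gamma$, the diagonal entries $R(E_\gamma,\bar E_\gamma,E_\delta,\bar E_\delta)$ and the off-diagonal entries are given through root inner products $\langle\gamma,\delta\rangle$ and the structure constants. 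Substituting this into the definitions (\ref{eq:cqb-define}) and (\ref{eq:dcqb}) turns both CQB and $^d$CQB into explicit sums over pairs of roots, and the remaining task is to establish positivity family by family for the classical cases with $b_2=1$ (Grassmannians of type $A$, quadrics and spinor varieties of types $B,D$, and the symplectic/Lagrangian Grassmannians of type $C$), following the scheme by which $\Ric^\perp>0$ was verified on these spaces in \cite{NWZ}. The main obstacle in (i) is precisely this case-by-case root computation: one wants to organize each root sum so that it reduces to the positive-definiteness of an associated matrix built from root inner products, rather than checking an unstructured inequality type by type.

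For (ii), I would run the Bochner technique for the holomorphic vector bundle $E=T'N$, whose Chern connection coincides with the Levi-Civita connection since $N$ is K\"ahler. By Hodge theory $H^q(N,T'N)$ is represented by $\dbar$-harmonic $T'N$-valued $(0,q)$-forms $\phi$, i.e. $\dbar\phi=0=\dbar^*\phi$. The Weitzenb\"ock/Bochner--Kodaira formula then gives a pointwise identity whose integral reads $0=\int_N\big(|\nabla\phi|^2+\mathcal Q(\phi)\big)$, where $\mathcal Q(\phi)$ is a contraction of the curvature $R$ against $\phi$ and $\bar\phi$ whose two constituent pieces are, respectively, a $\Ric$-type trace term and a ``cross'' bundle-curvature term. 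The decisive step is to recognize $\mathcal Q(\phi)$: for each fixed antiholomorphic multi-index its component of $\phi$ is a linear map of the CQB/$^d$CQB type, and one checks that the two pieces of $\mathcal Q(\phi)$ match exactly the two terms in the definition of $^d\mbox{CQB}_R$ (resp.\ of $-\mbox{CQB}_R$) summed over those multi-indices. This sign bookkeeping is exactly why the hypotheses read quasi-positive $^d$CQB and quasi-negative CQB. Granting the identification, quasi-positivity makes $\mathcal Q\ge 0$ everywhere, so the integral formula forces $\nabla\phi\equiv 0$ (hence $\phi$ parallel) and $\mathcal Q(\phi)\equiv 0$; strict positivity at one point then gives $\phi=0$ there, and parallel transport propagates this to $\phi\equiv 0$ on $N$, yielding $H^q(N,T'N)=\{0\}$ for all $1\le q\le n$. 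The case $q=1$ is the Kodaira--Spencer space of infinitesimal holomorphic deformations, so this gives deformation rigidity, and the ``in particular'' statement follows by combining with part (i).

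The hard part in (ii) is the curvature identification for general $q$: one must carry out the antisymmetrization over the $q$ form-indices carefully and confirm that the Bochner term splits precisely as the claimed sum of $^d$CQB (resp.\ $-$CQB) contributions, with no surviving indefinite cross terms that would spoil the sign. A secondary subtlety is upgrading the merely pointwise inequality under \emph{quasi}-positivity to global vanishing; since strict positivity is assumed only somewhere, the argument must pass through the parallelism of the harmonic form $\phi$ together with a unique-continuation/connectedness step, rather than relying on strict pointwise positivity at every point.
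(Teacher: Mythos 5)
Your proposal has genuine gaps in both parts. For part (i), your plan to verify CQB$>0$ and $^d$CQB$>0$ by fresh root-space computations, family by family, is precisely the ``unstructured'' verification you worry about, and you neither carry it out nor supply the idea that makes it unnecessary. The paper's key observation is algebraic: writing $A(\overline{E}_\beta)=A^i_\beta E_i$ and splitting $A=A_1+A_2$ into symmetric and skew-symmetric parts, the quadratic curvature term $R_{j\bar{i}\tau\bar{\beta}}A^j_\tau\overline{A^i_\beta}$ annihilates $A_2$ because $R_{j\bar{i}\tau\bar{\beta}}$ is symmetric in $(j,\tau)$ and in $(i,\beta)$. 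For the K\"ahler--Einstein metric with constant $\lambda$ this gives CQB$(A)\ge\lambda|A_1|^2-Q(A_1,\overline{A}_1)$ and $^d$CQB$(A)\ge\lambda|A_1|^2+Q(A_1,\overline{A}_1)$, where $Q(X\cdot Y,\overline{Z\cdot W})=R_{X\overline{Z}Y\overline{W}}$ is exactly the Calabi--Vesentini/Itoh Hermitian form on symmetric $2$-tensors. Positivity thus reduces to two scalar comparisons, $\lambda>\nu$ (largest eigenvalue of $Q$) and $\lambda+\nu_1>0$ (smallest eigenvalue), and these eigenvalues are already tabulated: Table 2 of \cite{CV} for the Hermitian symmetric spaces, and Itoh's Tables 4, 7, 10 for the remaining classical families $(B_r,\alpha_i)$, $(C_r,\alpha_i)$, $(D_r,\alpha_i)$, leaving only trivial arithmetic such as $\lambda+\nu_1=(2r-i+1)-2(r-i+1)=i-1>0$ in type $C$. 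Without this reduction to $Q$ and the quoted tables, your part (i) is a statement of intent, not a proof.

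For part (ii), there is a concrete structural error: a single Weitzenb\"ock identity has one fixed sign pattern, so you cannot obtain both the CQB combination ($\Ric-R$) and the $^d$CQB combination ($\Ric+R$) by ``recognizing'' the curvature term of the same Bochner formula two ways. The Akizuki--Nakano identity for $T'N$-valued $(0,1)$-forms produces the integrand $\Ric_{j\bar{i}}\phi^j_{\bar{\beta}}\overline{\phi^i_{\bar{\beta}}}-R_{j\bar{i}\tau\bar{\beta}}\phi^j_{\bar{\tau}}\overline{\phi^i_{\bar{\beta}}}$, i.e.\ CQB of the associated map, which is why quasi-\emph{negative} CQB works directly. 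To reach the $^d$CQB pattern the paper first applies the conjugate-linear operator $\#:A^{0,1}(T'N)\to A^{1,0}((T'N)^*)$, which intertwines $\partial$ with $\bar{\partial}$ and their adjoints and hence identifies $\mathcal{H}^{0,1}_{\bar{\partial}}(N,T'N)$ with $\mathcal{H}^{1,0}_{\partial}(N,(T'N)^*)$, and only then invokes the Kodaira--Bochner formula for $\Delta_\partial$, whose curvature integrand $\Ric_{\alpha\bar{\sigma}}\psi^{\bar{i}}_\sigma\overline{\psi^{\bar{i}}_\alpha}+R_{i\bar{j}\alpha\bar{\sigma}}\psi^{\bar{j}}_\sigma\overline{\psi^{\bar{i}}_\alpha}$ is where $^d$CQB enters; this duality step is absent from your sketch. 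Moreover, the exact termwise splitting you assert for $q>1$ (``no surviving indefinite cross terms'') is not a routine check and is not what the paper does: it writes out only $q=1$, and for the higher cohomology of C-spaces it passes through Calabi--Vesentini's weighted eigenvalue criterion $\lambda+\frac{q+1}{2q}\nu_1>0$, whose $q$-dependent constant signals that the antisymmetrization genuinely reweights the two curvature pieces. Your final vanishing mechanism (nonnegative integrand forces the curvature term to vanish identically, strict positivity on an open set kills $\phi$ there, then parallelism or unique continuation of the harmonic form finishes) does match the paper's, but it only becomes available after the two gaps above are filled.
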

The proof uses the results  of \cite{CV}, \cite{ChauTam}, \cite{Itoh}, and \cite{NWZ}.  We also use  the criterion of Fr\"olicher and Nijenhuis \cite{FN, KS} for the deformation rigidity statement.
The key is a Kodaira-Bochner formula (cf. \cite{CV})  and the role of a curvature notion $^d$CQB ({\it dual-cross quadratic bisectional curvature}) played in the Kodaira-Bochner formulae.
The rigidity result on K\"ahler C-spaces  can possibly  be implied by a result of Bott \cite{Bott}. Here it follows from a general vanishing theorem for manifolds with $^d$CQB$>0$.
Hence  before a complete classification of K\"aher manifolds with $^d$CQB$>0$ (cf. \cite{NZ-cross} for a precise conjecture related to this) the rigidity result above for $N$ with $^d$CQB$>0$ is a more general statement.  One can refer Sections 5 and 6 for further motivations and detailed discussions on these two new curvatures.

The new dual-cross quadratic bisectional curvature $^d$CQB  naturally induces a Ricci type curvature (in a  similar manner  as QB and  CQB induces $\Ric^\perp$.)  It is denoted by $\Ric^+$,  and  is defined,   for any $X\in T'_xN$, as
$$
\Ric^{+}(X,\overline{X})=\Ric(X, \overline{X})+H(X)/|X|^2.
$$
In Section 6, for the K\"ahler manifolds with $\Ric^+>0$ we have the following result similar to the $\Ric^\perp>0$ case.

\begin{theorem}\label{thm:3} Let $(N, h)$ be a complete K\"ahler manifold with $\Ric^+\ge \delta>0$ (or replaced with any one of  $\{\Ric_k\ge \delta, \Ric^{\perp}_k\ge \delta, \Ric^{+}_k\ge \delta\}$). Then (i) $N$ is compact, (ii)
$h^{p, 0}=0$ for all $n\ge p\ge 1$. In particular, $N$ is simply-connected and $N$ is projective. Since $^d$CQB$>0$ implies $\Ric^+>0$, this applies to compact manifolds with $^d$CQB$>0$.
\end{theorem}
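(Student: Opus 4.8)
The plan is to reduce everything to the compact case and then feed the outcome into the vanishing machinery of Section 3. Concretely, I would prove (i) by a \emph{weighted} Bonnet--Myers argument adapted to each of the four curvature conditions, deduce (ii) from the comass/viscosity vanishing theorems (Theorem \ref{thm:1} and its extensions \ref{thm:61-add}, \ref{thm:6-added-ricciperp}) once $N$ is known to be compact, and finally extract simple-connectedness and projectivity from the vanishing of all $h^{p,0}$ exactly as in the Riemann--Roch scheme recalled after Conjecture \ref{conj}.

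For (i), fix a point and let $\gamma:[0,L]\to N$ be a unit-speed minimizing geodesic with $T=\gamma'$. Choose a parallel unitary frame $\{E_1,\dots,E_n\}$ of $T'N$ along $\gamma$ with $T$ the real-part direction of $E_1$, and write the associated parallel real orthonormal frame as $\{e_1=T,\,Je_1,\,e_2,\,Je_2,\dots,e_n,\,Je_n\}$. For a parallel unit field $W$ and $f(t)=\sin(\pi t/L)$, the index form of $V=fW$ is $I(V,V)=\int_0^L\!\big(f'^2-f^2K(W,T)\big)\,dt$ with $K(W,T)=\langle R(W,T)T,W\rangle$, and $I(V,V)\ge0$ since $\gamma$ is minimizing. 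The key is to choose a \emph{weighted} family of such fields so the total curvature integrand reproduces precisely the relevant generalized Ricci. For $\Ric^+\ge\delta$ I would take
\[
\mathcal V=\{fe_j,\,fJe_j:2\le j\le n\}\cup\{\sqrt2\,fJe_1\},
\]
so that the summed curvature integrand equals $\sum_{j=2}^n\big(K(e_j,T)+K(Je_j,T)\big)+2K(Je_1,T)$; since $K(Je_1,T)=H(E_1)$ and the first sum is $\Ric^\perp(E_1,\overline{E}_1)$, this is $\Ric(E_1,\overline{E}_1)+H(E_1)=\Ric^+(E_1,\overline{E}_1)\ge\delta$, while the total kinetic coefficient is $2n$. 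Summing $I(V,V)\ge0$ over $\mathcal V$ yields $\tfrac{n\pi^2}{L}\ge\tfrac{\delta L}{2}$, i.e. $L^2\le 2n\pi^2/\delta$, so $\operatorname{diam}(N)<\infty$ and $N$ is compact by Hopf--Rinow. The same scheme handles the remaining conditions by adjusting which fields carry weight: for $\Ric_k\ge\delta$ use $\{fe_j,fJe_j:2\le j\le k\}\cup\{fJe_1\}$ on the parallel $k$-plane $\Sigma=\operatorname{span}\{E_1,\dots,E_k\}$ (integrand $\Ric_k(E_1;\Sigma)$); for $\Ric^+_k$ put the extra $\sqrt2$ on $Je_1$; for $\Ric^\perp_k$ omit the $Je_1$ field entirely. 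In each case the integrand is pointwise $\ge\delta$ and one obtains a uniform diameter bound.

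Granting compactness, (ii) is the vanishing $h^{p,0}=0$ for $1\le p\le n$. For $\Ric^\perp\ge\delta$ this is immediate: $\Ric^\perp>0$ forces $S^\perp_1>0$, so Theorem \ref{thm:1} gives $h^{p,0}=0$ for every $p\ge1$. The other conditions are handled by the same $\partial\bar\partial$-on-comass viscosity argument that proves Theorem \ref{thm:1}: a nontrivial holomorphic $(p,0)$-form would attain a positive interior maximum of its comass, at which the viscosity inequality forces the concentrated curvature contribution---a sum of $R$-terms over the supporting $p$-plane together with a holomorphic-sectional term---to be nonpositive, contradicting positivity of $\Ric_k$ (resp. $\Ric^+_k$, $\Ric^\perp_k$); this is precisely the content of Theorems \ref{thm:61-add} and \ref{thm:6-added-ricciperp}, applicable now that $N$ is compact. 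In particular the case $\Ric^+\ge\delta$ invokes the $\Ric^+$-term appearing in that computation. Since $^d$CQB$>0$ implies $\Ric^+>0$, the theorem applies to compact manifolds with $^d$CQB$>0$.

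Finally the ``in particular'' clauses follow formally. For simple-connectedness, the universal cover $\widetilde N$ inherits the same pointwise curvature bound and is complete, hence compact by (i); thus $\pi_1(N)$ is finite of some order $\nu$, and multiplicativity of the arithmetic genus under finite \'etale covers gives $\chi(\mathcal O_{\widetilde N})=\nu\,\chi(\mathcal O_N)$. By (ii) all $h^{p,0}$ vanish on both $N$ and $\widetilde N$, so $\chi(\mathcal O_N)=\chi(\mathcal O_{\widetilde N})=1$, forcing $\nu=1$ and $\pi_1(N)=\{0\}$. For projectivity, $h^{2,0}=0$ gives $H^2(N,\mathbb C)=H^{1,1}$, so a rational---hence after scaling integral---K\"ahler class exists and Kodaira's embedding theorem applies. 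The main obstacle is really step (i): one must verify that the anisotropic weighting (the $\sqrt2$ on $Je_1$, or its omission) reproduces exactly the generalized Ricci along $\gamma$, since these quantities are \emph{not} the real Ricci and a naive uniform weighting recovers only $\Ric_n$. The deeper analytic input---that the comass computation is genuinely controlled by $\Ric^+$ rather than the ordinary Ricci---is supplied by the viscosity technique of Section 3, which I would treat as a black box here.
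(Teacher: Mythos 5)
Your proposal is correct and follows essentially the same route as the paper: compactness via the second-variation (Bonnet--Myers) diameter estimate of Theorem \ref{thm-diameter}, the vanishing $h^{p,0}=0$ via the comass/viscosity theorems (Theorem \ref{thm:1}, Theorems \ref{thm:61-add} and \ref{thm:6-added-ricciperp}, and the paper's $S^+_k$ version for $\Ric^+$), and then Kobayashi's Riemann--Roch scheme plus Kodaira's criterion for the ``in particular'' clauses. Your explicit weighted variation fields (the $\sqrt{2}$ weight on $Je_1$ for the $\Ric^+$-type conditions, its omission for $\Ric^\perp_k$) merely fill in the computation the paper leaves implicit, and they reproduce exactly all four of the paper's stated diameter constants, confirming the normalization is right.
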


The proof of the above result again makes use of the method via a viscosity consideration on  the comass  (introduced in Section 3) and follows a similar line of argument as the proof of Theorem \ref{thm:1}. In Section 6 we also prove a diameter estimate and a result similar to Corollary \ref{coro:11} for $\Ric^+$.

The {\it cross quadratic bisectional curvature}  and its dual $^d$CQB  are shown   positive on some {\it exceptional K\"ahler C-spaces} too. Since CQB$>0$ (as QB$>0$) implies $\Ric^\perp>0$, Theorem \ref{thm:dr-cv1} generalizes the result of \cite{NWZ}. On the other hand it was shown by Chau-Tam \cite{ChauTam} that QB$>0$ fails to hold for all K\"ahler C-spaces with $b_2=1$,  and it was shown in \cite{NWZ} that there exists a non-homogenous  compact K\"ahler manifold with $\Ric^\perp>0$.  Hence  one of these two new curvature notions will more likely give  a curvature characterization of the compact K\"ahler C-spaces with $b_2=1$. Towards this direction we prove (in Theorem \ref{thm:cqb-rc}) that {\it  a compact K\"ahler manifold with CQB$>0$ must be rationally connected.} This also follows from \cite{Cam, KMM} and  the statement that   CQB$>0$ (or $^d$CQB$>0$) implies that $N$ is Fano (cf. \cite{NZ-cross}). More ambitious project is to apply these curvatures to tackle  the generalized Hartshorne conjecture concerning the Fano manifolds with a nef tangent bundle (cf. conjectures formulated in \cite{NZ-cross}). We also calibrate QB, CQB and $^d$CQB into  QB$_k$, CQB$_k$ or $^d$CQB$_k$ with $k\in \{1, \cdots, n\}$ to bridge them with $\Ric^\perp$ and $\Ric^+$.

In  the appendix, we study the gap in terms of vanishing theorems between QB$>0$ and $\Ric^\perp>0$.
Most results in this paper can be adapted to Hermitian manifolds without much difficulty, if the notions of involved curvatures are properly extended.

\section{Comass and the proof of Theorem \ref{thm:1}}
In \cite{N} and \cite{Ni-ijm} we developed a viscosity technique to apply a maximum principle to the operator norm of the differential of a holomorphic map. Here we extend the idea to differential forms. The {\it comass}  introduced by Whitney fits our need  quite well. We start with a brief summary of its properties.
Let $V$ be a Euclidean space. A $r$-(multi) vector ${\bf a}$ is an element of $\wedge_r V$, namely the space of $r$-multi linear skew symmetric forms  on $V^*$ (the dual of $V$). Here we identify $V$ and $V^*$ via the inner product when needed. A vector ${\bf a}$  is called simple if there exists $v_1, \cdots, v_r\in V$ such that ${\bf a}=v_1\wedge \cdots \wedge v_r$. This can be defined for $r$-covector $\omega$ similarly. For a $r$-covector $\omega$ the comass is defined in \cite{Whit} as
$$
\|\omega\|_0\doteqdot \sup \{ |\omega ({\bf a})|: {\bf a} \mbox{ is a simple $r$-vector}, \|{\bf a}\|=1\}.
$$
Here the norm $\|\cdot\|$ is the norm (an $L^2$-norm in some sense) induced by the inner product defined for simple vectors ${\bf a}=x_1\wedge \cdots \wedge x_r, {\bf b}=y_1\wedge \cdots \wedge y_r$, with $x_i, y_j \in V$, as
$$
\langle {\bf a}, {\bf b}\rangle \doteqdot \det(\langle x_i, y_j\rangle )
$$
and then extended bi-linearly to all $r$-covectors ${\bf a}$ and ${\bf b}$ which are linear combination of simple vectors.
The following results concerning the comass are well-known. The interested readers can find their proof in Whitney's classics \cite{Whit} (p52-55, Theorem 13A, Lemma 13a) or Federer's \cite{Fede} (Section 1.8).

\begin{theorem}[Whitney] \label{prop:21} (i) $\|\omega\|_0$ is a normal and $\|\omega\|_0=\sup \{|\omega({\bf a})|: \|{\bf a}\|_0=1\}$, where $\|{\bf a}\|_0$ is the mass of ${\bf a}$  defined as
$$
\|{\bf a}\|_0\doteqdot \inf \{\sum \|{\bf a}_i\|: {\bf a}=\sum {\bf a}_i, \mbox{ the } {\bf a}_i \mbox{ simple}\}.
$$

(ii) For any r-vector {\bf a}, $\|{\bf a}\|_0\ge \|{\bf a}\|$, with equality if and only if ${\bf a}$ is simple.

(iii) For each $\omega$ there exists a $r$-vector ${{\bf b}}$ such that $\|\omega\|_0 =|\omega({\bf b})|$, ${\bf b}$ is simple, and $\|{\bf b}\|=1$.

(iv) If $\omega$ is simple, $\|\omega\|_0=\|\omega\|$.

(v) $ \|\omega\|\ge \|\omega\|_0\ge \left(\frac{r! (n-r)!}{n!}\right)^{\frac{1}{2}} \|\omega\|$, with the first inequality holds equality if and only if $\omega$ is simple.
\end{theorem}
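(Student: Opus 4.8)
The plan is to treat the mass and the comass as a dual pair of norms and to reduce every assertion to three elementary ingredients: that the simple $r$-vectors span $\wedge_r V$, the triangle and Cauchy--Schwarz inequalities for the $L^2$ inner product, and the compactness of the set of simple unit vectors. Under the inner-product identification $V\cong V^*$ the pairing $\omega(\mathbf{a})$ becomes $\langle\omega,\mathbf{a}\rangle$, so $|\omega(\mathbf{a})|\le\|\omega\|\,\|\mathbf{a}\|$ with equality iff $\omega$ and $\mathbf{a}$ are parallel; this single bound drives most of the argument.

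I would record (ii) first. The inequality $\|\mathbf{a}\|_0\ge\|\mathbf{a}\|$ is immediate: for any decomposition $\mathbf{a}=\sum_i\mathbf{a}_i$ into simple pieces the triangle inequality gives $\|\mathbf{a}\|\le\sum_i\|\mathbf{a}_i\|$, and taking the infimum yields the claim; conversely the trivial one-term decomposition of a simple $\mathbf{a}$ shows $\|\mathbf{a}\|_0\le\|\mathbf{a}\|$, giving equality in the simple case. For the converse equality statement I would pass to the convex-geometric picture: the mass unit ball is by definition the convex hull of the balanced set $S$ of simple vectors of $L^2$-norm $\le 1$, which sits inside the $L^2$ unit ball $B$. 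A unit vector with $\|\mathbf{a}\|_0=\|\mathbf{a}\|$ therefore lies on $\partial B\cap\partial(\mathrm{conv}\,S)$, and a supporting-hyperplane argument forces it to be an extreme point of $\mathrm{conv}\,S$ lying on $\partial B$, hence a member of $S$ itself, i.e. simple.

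Next come (i), (iii), and (iv). For (i), positive homogeneity and subadditivity of $\|\cdot\|_0$ are clear from the supremum definition, and nondegeneracy follows because simple vectors span $\wedge_r V$, so a nonzero $\omega$ cannot vanish on all of them. The identity $\|\omega\|_0=\sup\{|\omega(\mathbf{a})|:\|\mathbf{a}\|_0=1\}$ is then a two-sided estimate: expanding $\mathbf{a}=\sum_i\mathbf{a}_i$ and applying $|\omega(\mathbf{a}_i)|\le\|\omega\|_0\,\|\mathbf{a}_i\|$ to the simple pieces gives $|\omega(\mathbf{a})|\le\|\omega\|_0\,\|\mathbf{a}\|_0$, while the reverse inequality follows since by (ii) every simple unit vector already has unit mass. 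Assertion (iii) is a compactness statement: the simple $r$-vectors of unit $L^2$-norm form a closed bounded subset of $\wedge_r V$, so the continuous function $\mathbf{a}\mapsto|\omega(\mathbf{a})|$ attains its supremum there. Assertion (iv) is the covector analogue of the simple case of (ii): diagonalizing a simple covector as $\omega=\|\omega\|\,e^1\wedge\cdots\wedge e^r$ in an orthonormal basis, Cauchy--Schwarz gives $|\omega(\mathbf{a})|\le\|\omega\|$ for simple unit $\mathbf{a}$, with equality at $\mathbf{a}=e_1\wedge\cdots\wedge e_r$, so $\|\omega\|_0=\|\omega\|$.

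Finally I would prove (v). The upper bound $\|\omega\|_0\le\|\omega\|$ is Cauchy--Schwarz applied to the pairing, and its equality case reduces to (iii)--(iv): if $\|\omega\|_0=\|\omega\|$, the maximizing simple unit vector $\mathbf{b}$ from (iii) realizes equality in Cauchy--Schwarz, forcing $\omega$ to be a scalar multiple of $\mathbf{b}$ and hence simple. For the lower bound I would expand $\omega=\sum_I\omega_I\,e^I$ over increasing multi-indices in an orthonormal basis, so that $\|\omega\|^2=\sum_I\omega_I^2$ is a sum of $\binom{n}{r}$ terms; testing the comass against the simple unit vector $e_J$ for the index $J$ maximizing $|\omega_I|$ gives $\|\omega\|_0\ge|\omega_J|=\max_I|\omega_I|\ge\bigl(\binom{n}{r}\bigr)^{-1/2}\|\omega\|$, which is precisely the stated constant $\bigl(r!(n-r)!/n!\bigr)^{1/2}$. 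The main obstacle is the sharp equality characterization in (ii): showing that a non-simple vector has strictly larger mass is the one point where the soft triangle-inequality estimates do not suffice and a genuine extreme-point analysis of the mass ball against the $L^2$ sphere is required.
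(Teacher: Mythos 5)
Your proposal is correct, but there is nothing in the paper to match it against: the paper does not prove Theorem \ref{prop:21} at all, instead quoting it from Whitney \cite{Whit} (p.~52--55, Theorem 13A, Lemma 13a) and Federer \cite{Fede} (Section 1.8). So your contribution is a self-contained substitute for a citation, and your dual-norm picture --- mass unit ball as the convex hull of the compact balanced set $S$ of simple vectors in the closed $L^2$-ball, comass as the dual norm --- is essentially the Federer-style formulation, carried out with elementary convexity instead of being quoted. All five parts go through as you describe: (i) follows from your two-sided estimate $|\omega(\mathbf{a})|\le\|\omega\|_0\,\|\mathbf{a}\|_0$ together with (ii) for the reverse inequality; (iii) is compactness of the simple unit vectors (the Grassmann cone is closed, being cut out by the Pl\"ucker quadrics); (iv) is Gram--Schmidt plus Cauchy--Schwarz; and in (v) your test vectors $e_J$ give exactly $\bigl(\binom{n}{r}\bigr)^{-1/2}=\bigl(r!(n-r)!/n!\bigr)^{1/2}$, while the equality case correctly combines (iii) with the equality condition in Cauchy--Schwarz. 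The one step you rightly flag as delicate, the equality case of (ii), is sound but should be written with three routine verifications made explicit: first, that the gauge of $\mathrm{conv}(S)$ coincides with the mass (rescale a decomposition $\mathbf{a}=\sum_i\mathbf{a}_i$ into a convex combination of unit simple vectors and back), since the identification of the mass ball with $\mathrm{conv}(S)$ is not literally ``by definition''; second, that $\mathrm{conv}(S)$ is closed (Carath\'eodory plus compactness of $S$), so a vector of mass exactly $1$ genuinely lies in it; third, the extreme-point step, where the cleanest phrasing is that strict convexity of the Euclidean ball $B$ makes every point of $\partial B$ an extreme point of $B$, hence of the subset $\mathrm{conv}(S)\subseteq B$, and Minkowski's theorem then places it in $S$. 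With those sentences added, your argument is a complete and correct proof of the theorem the paper only cites.
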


We shall prove the theorem via an argument by contradiction. Assume that $S_k^{\perp}>0$ and there exists a $\phi\ne 0$ which  is a harmonic $(p, 0)$-form with $p\ge k$. It is well known that it is holomorphic. Let $\|\phi\|_0(x)$ be its comass at $x$. Then its maximum (nonzero) must be attained somewhere at $x_0\in N$. We shall exam $\phi$ more closely in a coordinate chart (to be specified later) of $x_0$.  By the above proposition, at $x_0$, there exits a simple  $p$-vector ${\bf b}$ with $\|{\bf b}\|=1$, which we may assume to be $\frac{\partial}{\partial z_1}\wedge \cdots \wedge \frac{\partial}{\partial z_p}$ for a unitary  frame  $\{\frac{\partial}{\partial z_k}\}_{k=1, \cdots, n}$ at $x_0$, such that $\max_{x\in N} \|\phi\|_{0}(x)=\|\phi\|_0(x_0)=|\phi({\bf b})|$. If we denote $\phi=\frac{1}{p!}\sum_{I_p} a_{I_p}dz^{i_1}\wedge \cdots \wedge dz^{i_p}$,  where $I_p=(i_1, \cdots, i_p)$ runs all $p$-tuples with $i_s\ne i_t$ if $s\ne t$, we deduce
$$
\|\phi\|_0(x_0)=|a_{12\cdots p}|(x_0).
$$
Extend the frame to have a normal complex coordinate chart $U$ centered at $x_0$. This means that at $x_0$, the metric tensor $g_{\alpha\bar{\beta}}$ satisfies (cf, \cite{Tian})
$$
g_{\alpha \bar{\beta}}=\delta_{\alpha \beta}, \quad dg_{\alpha \bar{\beta}} =0,\quad  \frac{\partial^2 g_{\alpha\bar{\beta}}}{\partial z_{\gamma}\partial z_{\delta}}=0.
$$
With respect to this coordinate  $\phi(x)=\frac{1}{p!}\sum_{I_p} a_{I_p}(x)dz^{i_1}\wedge \cdots \wedge dz^{i_p}$ for $x\in U$ with $a_{I_p}(x)$ being holomorphic. Let $$\widetilde{\phi}(x)\doteqdot  \frac{1}{\sqrt{\det(g_{\alpha\bar{\beta}})_{1\le \alpha, \beta\le p} } \sqrt{\det(g^{\alpha\bar{\beta}})_{1\le \alpha, \beta\le p}}} a_{12\cdots p}(x)  dz^{1}\wedge \cdots \wedge dz^{p}.$$
This is defined in $U$. Since $\widetilde{\phi}$ is simple
$\|\widetilde{\phi}\|_0^2(x)=\|\widetilde{\phi}\|^2(x) = \frac{|a_{1\cdots p}|^2(x)}{\det(g_{\alpha\bar{\beta}})_{1\le \alpha, \beta\le p}}$.
In particular $\|\widetilde{\phi}\|_0(x_0)=\|\phi\|_0(x_0)$. On the other hand let ${\bf a}=\frac{\partial}{\partial z^1}\wedge \cdots \wedge \frac{\partial}{\partial z^p}$.
Then by the definition of the comass $\|\cdot\|_0$, namely  $\|\phi\|_0=\sup \frac{|\phi({\bf a})|}{\|{\bf a}\|}$, taking among all simple nonzero ${\bf a}$, 
$$
\|\phi\|^2_0(x)\ge \frac{|\phi({\bf a})|^2}{\|{\bf a}\|^2}=\frac{|a_{1\cdots p}|^2(x)}{\det(g_{\alpha\bar{\beta}})_{1\le \alpha, \beta\le p}}\ge \|\widetilde{\phi}\|_0^2(x).
$$  
Since $\|\phi\|_0(x)\le \|\phi\|_0(x_0)$, as a consequence we  have that
$$
\|\widetilde{\phi}\|(x)=\|\widetilde{\phi}\|_0(x)\le \|\phi\|_0(x)\le \|\phi\|_0(x_0)=|a_{1\cdots p}(x_0)|=\|\widetilde{\phi}\|_0(x_0)=\|\widetilde{\phi}\|(x_0).
$$
In summary, we have constructed a simple  $(p, 0)$-form $\widetilde{\phi}(x)$ in the neighborhood of $x_0$ such that its $L^2$-norm attains its maximum value at $x_0$.

Now apply $\partial_v \partial_{\bar{v}}$ to $\log \|\widetilde{\phi}\|^2$ at $x_0$.  If $v=\frac{\partial}{\partial z^\gamma}$ we have that at point $x_0$ that
\begin{equation}\label{eq:22}
0\ge  -\sum_{\alpha, \beta=1}^p g^{\alpha \bar{\beta}} \frac{\partial^2}{\partial z^{\gamma}\partial \bar{z}^\gamma} g_{\alpha\bar{\beta}}=\sum_{\alpha=1}^p R_{\alpha \bar{\alpha} \gamma \bar{\gamma}}.
\end{equation}
Namely we have arrived that at $x_0$, 
\begin{equation}\label{eq:23}
0\ge \sum_{j=1}^p R_{v\bar{v}j\bar{j}}.
\end{equation}
Now we are essentially at the same position of the proof in \cite{NZ}. For the sake of the completeness we include the argument below. Let $\Sigma=\mbox{span}\{\frac{\partial}{\partial z_1}, \cdots, \frac{\partial}{\partial z_p}\}$. It is easy to see from (\ref{eq:23}) that $S_p(x_0, \Sigma)\le 0$, where $S_p(x_0, \Sigma)$ denotes the scalar curvature of the curvature $R$ restricted to $\Sigma$. In fact
$S_p(x_0, \Sigma)=\sum_{i, j=1}^p R_{i\bar{i} j\bar{j}}$.

On the other hand as in \cite{NZ}
\begin{eqnarray}
\frac{1}{p} S^{\perp}_p(x_0, \Sigma)&=&\aint_{Z\in \Sigma, |Z|=1} \Ric^\perp(Z, \overline{Z})\, d\theta(Z)=\aint_{Z\in \Sigma, |Z|=1} \left(\Ric(Z, \overline{Z})-H(Z)\right)\, d\theta(Z)\nonumber\\
&=&\aint \frac{1}{Vol(\mathbb{S}^{2n-1})}\left(\int_{\mathbb{S}^{2n-1}} \left(nR(Z, \overline{Z}, W, \overline{W})-H(Z)\right)\, d\theta(W)\right)\, d\theta(Z)\nonumber\\
&=&\frac{1}{Vol(\mathbb{S}^{2n-1})}\int_{\mathbb{S}^{2n-1}}\left( \aint \left(nR(Z, \overline{Z}, W, \overline{W})-H(Z)\right)\, d\theta(Z)\right)d\theta(W)\nonumber\\
&=& \frac{1}{p}\left(\Ric_{1\bar{1}}+\Ric_{2\bar{2}}+\cdots +\Ric_{p\bar{p}} \right)-\frac{2}{p(p+1)}S_p(x_0, \Sigma).\label{eq:24}
\end{eqnarray}
Applying (\ref{eq:23}) to $v=\frac{\partial}{\partial z_i}$ for $i=p+1, \cdots, n$, and summing the obtained  inequalities  we have that \begin{equation}\label{eq:25}
\Ric_{1\bar{1}}+\Ric_{2\bar{2}}+\cdots +\Ric_{p\bar{p}}=S_p(x_0, \Sigma)+\sum_{\ell=p+1}^n \sum_{j=1}^p R_{\ell\bar{\ell}j\bar{j}}\le S_p(x_0, \Sigma).
\end{equation}
Combining (\ref{eq:24}) and (\ref{eq:25}) we have that
\begin{equation}\label{eq:26}
0<S^\perp_k(x_0)\le  S^{\perp}_p(x_0, \Sigma)\le S_p(x_0,\Sigma)-\frac{2}{p+1} S_p(x_0,\Sigma)=\frac{p-1}{p+1}S_p(x_0, \Sigma).
\end{equation}
This implies $S_p(x_0, \Sigma)>0$, a contradiction,  since we have shown that a consequence of (\ref{eq:23}) is  $S_p(x_0, \Sigma)\le 0$.

From the definition of $S_k^\perp$ it is easy to see that $\Ric^\perp>0$ implies that $S_k^\perp>0$ for all $k\in \{1, \cdots, n\}$. Hence $h^{p, 0}=0$ for all $p\ge 1$ by the above under the assumption $\Ric^\perp>0$.

 The simply-connectedness claimed in Theorem \ref{thm:1} follows from the argument of \cite{Ko} illustrated in the introduction. The proof under the assumption $S_k>0$ is similar, but easier in view of (\ref{eq:23}).
 
Note that under $\Ric^\perp>0$, $\pi_1(N)$ is finite by a result of \cite{NZ}. This in particular implies that $b_1=2h^{1, 0}=0$. The argument here provides an alternate proof of this.

\begin{remark} The argument here also provides an alternate  proof of the main theorem  of \cite{Ni-Zheng2}. It is clear that the K\"ahlerity is not absolutely needed. Hence one can easily formulate a corresponding result for Hermitian manifolds. We leave this to interested readers. The concepts of $S_k(x, \Sigma)$ and $S^\perp_k(x_0, \Sigma)$ were introduced  in \cite{NZ, N, Ni-ijm, Ni-Zheng2}.
\end{remark}

\section{Rational connectedness and $\Ric_k$}

A complex manifold $N$ is called rationally connected if any two points of $N$ can be joined by a chain of rational curves. Various criterion on the rational connectedness have been established by various authors.
In particular the following was prove in \cite{CDP}:
\begin{theorem}[Campana-Demailly-Peternell]\label{prop:61} Let $N$ be a projective algebraic manifold of complex dimension $n$. Then $N$ being rationally connected if and only if for any ample line bundle $L$, there exist $C(L)$ such that
\begin{equation}\label{eq-prop:61}
H^0(N, ((T'N)^*)^{\otimes p} \otimes L^{\otimes \ell})=\{0\}
\end{equation}
for any $p\ge C(L)\ell$, with $\ell$ being any positive integer.
\end{theorem}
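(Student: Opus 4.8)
The plan is to establish the equivalence by proving the two implications separately, using in the forward direction the negativity of the cotangent bundle along very free rational curves, and in the reverse direction the maximal rationally connected (MRC) fibration together with the Boucksom-Demailly-P\u{a}un-Peternell characterization of uniruledness. Throughout one identifies $((T'N)^*)^{\otimes p}$ with $(\Omega^1_N)^{\otimes p}$ and uses that, in characteristic zero, the wedge power $\Omega^q_N=\wedge^q\Omega^1_N$ is a direct summand (in particular a subsheaf) of $(\Omega^1_N)^{\otimes q}$.

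First I would prove that rational connectedness implies the vanishing. Fix a dominating family of \emph{very free} rational curves on $N$; for a general member $f\colon\mathbb{P}^1\to N$ one has $f^*T'N=\bigoplus_i\mathcal{O}(a_i)$ with every $a_i\ge 1$, hence $f^*(T'N)^*$ is a sum of negative line bundles and every summand of $f^*((T'N)^*)^{\otimes p}$ has degree at most $-p$. Writing $d\doteqdot L\cdot f_*[\mathbb{P}^1]$, the restriction of any section $s\in H^0(N,((T'N)^*)^{\otimes p}\otimes L^{\otimes \ell})$ lies in a bundle on $\mathbb{P}^1$ all of whose summands have degree at most $\ell d-p$, which is negative as soon as $p>\ell d$. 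Thus with $C(L)\doteqdot d+1$ the restriction of $s$ to a general curve of the family vanishes, and since the family dominates $N$ the section $s$ is identically zero.

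For the converse I would argue by contraposition via the MRC fibration, realized after resolution as a morphism $\pi\colon\widetilde N\to Z$ onto a smooth projective base, with $\widetilde N\to N$ birational. If $N$ is not rationally connected then $q\doteqdot\dim Z\ge 1$, the base $Z$ is not uniruled by Graber-Harris-Starr, and so $K_Z$ is pseudo-effective by Boucksom-Demailly-P\u{a}un-Peternell. Its pullback $\pi^*K_Z$ is a pseudo-effective invertible subsheaf of $\Omega^q_{\widetilde N}$, and transporting it down to $N$ yields a pseudo-effective invertible subsheaf $\mathcal{F}\hookrightarrow\Omega^q_N\hookrightarrow((T'N)^*)^{\otimes q}$. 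Now the decisive numerical fact is that a pseudo-effective class plus an ample class is big: for every $a\ge 1$ the line bundle $\mathcal{F}^{\otimes a}\otimes L$ is big, so $H^0(N,\mathcal{F}^{\otimes am}\otimes L^{\otimes m})\ne 0$ for $m\gg 0$. Composing with the inclusion $\mathcal{F}^{\otimes am}\hookrightarrow((T'N)^*)^{\otimes qam}$ produces a nonzero element of $H^0(N,((T'N)^*)^{\otimes qam}\otimes L^{\otimes m})$; choosing first $a$ with $qa\ge C(L)$ and then $m$ large, the exponents satisfy $qam\ge C(L)m$, contradicting the hypothesis. Hence $q=0$ and $N$ is rationally connected.

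I expect the genuine difficulty to lie entirely in the reverse direction, and within it in two places: the deep input of Boucksom-Demailly-P\u{a}un-Peternell that a non-uniruled projective manifold has pseudo-effective canonical bundle, and the technical step of descending the pseudo-effective subsheaf from the smooth model $\widetilde N$ to an honest invertible subsheaf of $\Omega^q_N$ on $N$ (using the birational invariance of spaces of twisted holomorphic forms and the reflexivity of $\Omega^q_N$), where the care is concentrated. The forward direction is comparatively routine once a dominating family of very free curves with controlled $L$-degree is available.
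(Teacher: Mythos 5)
Your proof is correct, but there is nothing in the paper to compare it against: the author states this result as a quoted theorem and imports it wholesale from \cite{CDP}, offering no proof of it (the paper's work begins only afterwards, in verifying the vanishing criterion (\ref{eq-prop:61}) under the curvature hypothesis $\Ric_k>0$). What you have reconstructed is, in substance, the original Campana--Demailly--Peternell argument: the forward implication via a dominating family of very free curves with $C(L)=L\cdot f_*[\mathbb{P}^1]+1$, and the converse by contraposition through the MRC fibration, Graber--Harris--Starr (the base $Z$ is not uniruled), Boucksom--Demailly--P\u{a}un--Peternell ($K_Z$ pseudo-effective), and the numerical fact that pseudo-effective plus ample is big; all of these steps are sound as you wrote them, including the choice $a\ge C(L)/q$ producing the contradiction $qam\ge C(L)m$. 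One simplification worth noting: the descent step you flag as the delicate point can be bypassed entirely by staying on the resolution $\widetilde N$. Since $\mu^*L$ is nef and big and the sum of a pseudo-effective class with a big class is big, $\left(\pi^*K_Z\right)^{\otimes a}\otimes\mu^*L$ is big on $\widetilde N$, giving nonzero sections of $(\Omega^1_{\widetilde N})^{\otimes qam}\otimes\mu^*L^{\otimes m}$; these descend to $N$ via the birational invariance $H^0\bigl(\widetilde N,(\Omega^1_{\widetilde N})^{\otimes p}\otimes\mu^*L^{\otimes\ell}\bigr)\cong H^0\bigl(N,(\Omega^1_N)^{\otimes p}\otimes L^{\otimes\ell}\bigr)$, which follows from $\mu$ being an isomorphism off a set of codimension at least two together with a Hartogs-type extension for sections of the locally free sheaf $(\Omega^1_N)^{\otimes p}\otimes L^{\otimes\ell}$. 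If you do insist on descending the subsheaf itself, your appeal to reflexivity is the right one: the restriction of $\pi^*K_Z$ to the locus where $\mu$ is an isomorphism extends across the codimension-two complement to a rank-one reflexive, hence invertible, subsheaf of $\Omega^q_N$, and pseudo-effectivity passes down because its first Chern class is computed on that open set.
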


It was proved in \cite{HeierWong} that a compact projective manifold with positive holomorphic sectional curvature must be rationally connected. The projectivity was proved in \cite{XYang} afterwards (an alternate proof of the rational connectedness was also given there). In \cite{N}, the concept $\Ric_k$ was introduced, which interpolates between the holomorphic sectional curvature and the Ricci curvature. Precisely for any $k$ dimensional subspace $\Sigma\subset T_x'N$, $\Ric_k(x, \Sigma)$ is the Ricci curvature of $R|_\Sigma$. Under $\Ric_k<0$, the $k$-hyperbolicity was proved in \cite{N}.

We say $\Ric_k(x)>\lambda(x)$ if $\Ric_k(x, \Sigma)(v, \bar{v})>\lambda |v|^2$, for any $v\in \Sigma$ and  for every $k$-dimensional subspace $\Sigma$. Similarly  $\Ric_k>0$ means that $\Ric_k(x)>0$ everywhere. The condition $\Ric_k>0$ does not become weaker as $k$ increases since more $v$ needs to be tested. In fact Hitchin \cite{Hitchin} illustrated examples of K\"ahler metrics  with $\Ric_1>0$  on all Hirzebruch surfaces.  But on most of them one could not possibly find metrics with  $\Ric_2>0$. More examples can be found in \cite{AHZ, NZ}. But it is easy to see that $S_k>0$ does follows from $\Ric_k>0$, and $S_k>0$ becomes weaker as $k$ increases with $S_1$ being the same as the holomorphic sectional curvature and $S_n$ being the scalar curvature.
Hence if $\Ric_2>0$, $N$ is also projective by the result of \cite{Ni-Zheng2}.  Naturally one would ask {\it whether or not   a compact K\"ahler manifold with $\Ric_k>0$ for some $k\in\{3, \cdots, n-1\}$ is  projective} since the projectivity has been known  for the cases of $k=1, k=2$ and $k=n$. The following result  provides an affirmative answer.

\begin{theorem}\label{thm:61-add}
Let $(N^n, h)$ be a compact K\"ahler manifold with $\Ric_k>0$ for some $1\le k \le n$. Then $h^{p, 0}=0$ for $1\le p\le n$. In particular, $N$ must be projective.
\end{theorem}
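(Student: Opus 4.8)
The plan is to follow the same viscosity-on-comass scheme used to prove Theorem \ref{thm:1}, but with the curvature bookkeeping adjusted so that the positivity of $\Ric_k$ (rather than $S_k^\perp$) yields the contradiction. Suppose for contradiction that $\phi$ is a nonzero holomorphic $(p,0)$-form with $1\le p\le n$. As before, let $\|\phi\|_0(x)$ be its comass; it attains a nonzero maximum at some $x_0\in N$. Using Whitney's properties (Theorem \ref{prop:21}), I would pass to a unitary frame $\{\partial/\partial z^\alpha\}$ at $x_0$ realizing the comass on the simple $p$-vector ${\bf b}=\frac{\partial}{\partial z^1}\wedge\cdots\wedge\frac{\partial}{\partial z^p}$, extend it to a normal complex coordinate chart, and form the simple comparison form $\widetilde\phi$ exactly as in the proof of Theorem \ref{thm:1}. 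This produces a simple $(p,0)$-form whose $L^2$-norm $\|\widetilde\phi\|^2$ is maximized at $x_0$, so applying $\partial_v\partial_{\bar v}\log\|\widetilde\phi\|^2$ at $x_0$ gives, for each direction $v=\partial/\partial z^\gamma$, the inequality
\begin{equation}
0\ge \sum_{j=1}^p R_{v\bar v j\bar j},
\end{equation}
which is precisely (\ref{eq:23}). So far nothing depends on the specific curvature hypothesis; the viscosity input is identical.

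The difference lies in how one extracts a contradiction from (\ref{eq:23}). For $S_k^\perp>0$ one averaged to produce a scalar-curvature inequality; here I want to test $\Ric_k$ directly. The natural move is to restrict attention to the $p$-dimensional subspace $\Sigma=\mathrm{span}\{\partial/\partial z^1,\dots,\partial/\partial z^p\}$ and interpret the left side of (\ref{eq:23}), summed appropriately, in terms of the Ricci curvature of $R|_\Sigma$. Indeed, summing (\ref{eq:23}) over $v=\partial/\partial z^1,\dots,\partial/\partial z^p$ gives $0\ge S_p(x_0,\Sigma)=\sum_{i,j=1}^p R_{i\bar i j\bar j}$, the $p$-scalar curvature of $\Sigma$. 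Since $\Ric_p>0$ implies $S_p>0$ (as remarked in the paragraph preceding the theorem, $S_k>0$ follows from $\Ric_k>0$), this already yields a contradiction whenever $p\ge k$ after first checking $\Ric_k>0$ forces $S_p>0$ for the relevant $p$. The subtlety is that $\Ric_k>0$ does not propagate upward in $k$, so I must be careful to produce, for each $p$ in the full range $1\le p\le n$, a subspace on which the tested curvature is controlled by the single hypothesis $\Ric_k>0$.

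The main obstacle, and the place where the argument must genuinely use $\Ric_k$ rather than merely $S_k$, is handling the values $p<k$: for those $p$ the subspace $\Sigma$ has dimension below $k$, and $\Ric_k>0$ gives no immediate information about curvatures restricted to a smaller subspace. The idea I would pursue is to enlarge $\Sigma$ to a $k$-dimensional subspace $\widehat\Sigma\supset\Sigma$ by adjoining $k-p$ of the remaining frame directions, and to combine (\ref{eq:23}) for $v\in\widehat\Sigma$ with the second variation estimate from \cite{Ni-Zheng2} to control the cross terms $\sum_{\ell}\sum_{j=1}^p R_{\ell\bar\ell j\bar j}$ that arise in the fiber direction of the flag. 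The point is that (\ref{eq:23}) holds for every coordinate direction $v$, including those outside $\Sigma$, so one has a family of inequalities indexed by the full frame; testing $\Ric_k(x_0,\widehat\Sigma)(v,\bar v)$ against a well-chosen $v\in\Sigma$ and summing the $\widehat\Sigma$-directional versions of (\ref{eq:23}) should produce a nonnegative quantity dominated by a negative one, contradicting $\Ric_k(x_0)>0$. I expect the bookkeeping of which directional inequalities to sum, and the invocation of the \cite{Ni-Zheng2} fiber estimate to absorb the off-diagonal curvature contributions, to be the delicate step; the viscosity and comass setup is routine given Theorem \ref{thm:1}.
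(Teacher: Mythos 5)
Your setup (the comass/viscosity argument producing the inequality (\ref{eq:23}) at the comass maximizer $x_0$) and your treatment of the range $p\ge k$ via $\Ric_k>0\Rightarrow S_k>0\Rightarrow S_p>0$ match the paper exactly. The gap is in your plan for $p<k$. You propose to invoke the second variation estimate of \cite{Ni-Zheng2} (Proposition \ref{prop:31}) to absorb ``cross terms in the fiber direction,'' but that estimate is derived from the first and second variation of $S_k(x_0,\cdot)$ at a \emph{minimizing} $k$-subspace; here your enlargement $\widehat\Sigma$ is constrained to contain the specific $p$-plane $\Sigma=\operatorname{span}\{\partial/\partial z^1,\dots,\partial/\partial z^p\}$ singled out by the comass maximizer, and there is no reason a minimizer of $S_k(x_0,\cdot)$ should contain $\Sigma$. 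As written, that step would fail. (In the paper the variational estimates of \cite{Ni-Zheng2} enter only in Theorem \ref{thm:61}, the rational connectedness vanishing, where the relevant subspace \emph{is} free to be chosen as a minimizer.)

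Moreover, the delicate bookkeeping you defer does not exist: there are no cross terms to control. Since (\ref{eq:23}) holds for \emph{every} $v\in T'_{x_0}N$, in particular for every unit $v$ in any $k$-dimensional $\widehat\Sigma\supset\Sigma$, you may simply average it over the unit sphere of $\widehat\Sigma$. For each fixed $j\le p$,
\begin{equation*}
\aint_{v\in\mathbb{S}^{2k-1}\subset\widehat\Sigma} R_{v\bar v j\bar j}\,d\theta(v)=\frac1k\,\Ric_k(x_0,\widehat\Sigma)\left(\frac{\partial}{\partial z^j},\overline{\frac{\partial}{\partial z^j}}\right)>0,
\end{equation*}
because $\partial/\partial z^j\in\widehat\Sigma$ and $\Ric_k>0$ is assumed on \emph{all} $k$-subspaces, so no minimizing choice is needed and no off-diagonal curvature terms ever appear (the $j$ indices stay inside $\Sigma$ while $v$ ranges over $\widehat\Sigma$). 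Summing over $j=1,\dots,p$ gives a strictly positive average of $\sum_{j=1}^p R_{v\bar v j\bar j}$, contradicting (\ref{eq:23}). This one-line averaging is the paper's actual proof for $p<k$: your idea of enlarging $\Sigma$ to a $k$-dimensional subspace is exactly right, but the contradiction closes immediately without any variational input.
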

\begin{proof} By Theorem \ref{thm:1} and that $\Ric_k>0$ implies $S_k>0$  we have that $h^{p, 0}=0$ for $p\ge k$. Hence we only need to focus on the case $p<k$. The first part of proof of Theorem \ref{thm:1} asserts that if there exists a holomorphic $(p, 0)$-form $\phi\ne 0$,  then (\ref{eq:23}) holds. Namely  there exists $x_0\in N  $, and a unitary normal coordinate centered at  $x_0$ such that at $x_0$:
\begin{equation}\label{eq:623}
 \sum_{j=1}^p R_{v\bar{v}j\bar{j}}\le 0
\end{equation}
for any $v\in T_{x_0}'N$.

Now we pick a $k$-subspace $\Sigma\subset T_{x_0}'N$ such that it contains the $p$-dimensional subspace spanned by  $\{\frac{\partial}{\partial z^1}, \cdots, \frac{\partial}{\partial z^p}\}$. Then by the assumption $\Ric_k>0$,  $\forall j\in \{1, \cdots, p\}$.
$$
\aint_{v\in \mathbb{S}^{2k-1}\subset \Sigma} R_{v\bar{v}j\bar{j}}\, d\theta(v)=\frac{1}{k} \Ric_k\left(\frac{\partial}{\partial z^j}, \frac{\partial}{\partial z^{\bar{j}}}\right)>0.
$$
Thus we have that
$$
 \aint_{v\in \mathbb{S}^{2k-1}\subset \Sigma}\sum_{j=1}^p R_{v\bar{v}j\bar{j}}\, d\theta(v)>0.
$$
This is a contradiction to (\ref{eq:623}). The contradiction proves that $h^{p, 0}=0$ for $p<k$. The projectivity follows from $h^{2, 0}=0$ and a theorem of Kodaira (cf. \cite{MK}, Theorem 8.3 of Chapter 3).
\end{proof}
For $k=1, 2, n$, the result  are previously  known except when $k=2, p\ne 2$. The above proof provide a unified argument for all the previous known cases. The argument above proves a bit more. To state the result we introduce the following:

\begin{definition}
We call the curvature operator $R$ BC-$p$  positive at $x_0$ (BC stands for the bisectional curvature)  if for any unitary orthogonal $p$-vectors $\{E_1,\cdots, E_p\}$, there exists a $v\in T'_{x_0}N$ such that
\begin{equation}\label{eq:BC-pq}
\sum_{i=1}^p  R_{v\bar{v} E_i \overline{E}_i}  >0.
\end{equation}
We say that $(N, h)$ is BC-$p$  positive if the above holds all $x_0\in N$. This can be easily adapted to Hermitian bundle $(V, h)$ over a Hermitian manifold $N$ since the expression in (\ref{eq:BC-pq}) makes sense for $v\in V_{x_0}$ and  unitary $p$-vectors $\{E_1,\cdots, E_p\}$ of $T'_{x_0}N$.
\end{definition}

  It is easy to see that BC-$1$ positivity is the same as $RC$-positivity for the tangent bundle defined in \cite{XYang}. In general BC-$p$ positivity amounts to at any $x\in N$
\begin{equation}\label{eq:defbc}
\min_{\Sigma\in G_{p, n}(T_x'N)} \max_{|X|=1} \left(  \aint_{Z\in \mathbb{S}^{2p-1}\subset \Sigma} R(X,\overline{X}, Z, \overline{Z})\, d\mu(Z)\right)>0.
\end{equation}
Here $G_{p, n}(T_x'N)$ denotes the Grassmanian of rank $p$ subspaces of $T_x'N$.
 If we endow a compact complex manifold $N^n$ with a Hermitian metric. Let $R$ be its curvature, which can be viewed as section of $\bigwedge^{1, 1}(\mbox{End}(T_{x_0} N))$. Then BC-$p$ positivity can be defined for any Hermitian vector bundles. Here we  specialize it to $V=T'N$.

\begin{corollary}\label{coro:6-add-3} If the curvature of a Hermitian manifold $(N^n, h)$ satisfies the  BC-$p$ positivity for some $1\le p\le n$, then $h^{p, 0}=0$. Hence any K\"ahler manifold with BC-$2$ positive curvature must be projective. Moreover, the $2$-positivity of $\Ric_k$ (for some $k\ge 2$) implies the $BC$-$2$ positivity, thus the projectivity of $N$. Similarly $\Ric_{k+1}^{\perp}>0$ implies BC-$p$ positivity, $\forall p\ge k$.
\end{corollary}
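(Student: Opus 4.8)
\textbf{BC-$p$ positivity $\Rightarrow h^{p,0}=0$ and projectivity.} The plan is to re-run, essentially without change, the first half of the proof of Theorem \ref{thm:1}. Assuming a nonzero holomorphic $(p,0)$-form $\phi$, I would pass to a maximum point $x_0$ of its comass $\|\phi\|_0$ and to the unitary normal coordinate adapted to the simple maximizing $p$-vector ${\bf b}=\frac{\partial}{\partial z^1}\wedge\cdots\wedge\frac{\partial}{\partial z^p}$. Applying $\partial_v\partial_{\bar v}$ to $\log\|\widetilde\phi\|^2$ at $x_0$ produces exactly inequality (\ref{eq:23}): $\sum_{j=1}^{p}R_{v\bar v j\bar j}\le 0$ for every $v\in T'_{x_0}N$, relative to the fixed unitary frame $\{\frac{\partial}{\partial z^j}\}_{j=1}^{p}$. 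This is precisely the negation of BC-$p$ positivity (\ref{eq:BC-pq}) tested against that same frame, which instead supplies a $v$ with $\sum_{j=1}^{p}R_{v\bar v j\bar j}>0$; the contradiction forces $\phi\equiv 0$, so $h^{p,0}=0$. For $p=2$ this gives $h^{2,0}=0$, hence projectivity by Kodaira's criterion exactly as in Theorem \ref{thm:61-add}. In the Hermitian case I would invoke the Remark after Theorem \ref{thm:1}: the argument is insensitive to K\"ahlerity once $\partial_v\partial_{\bar v}\log\det(g_{\alpha\bar\beta})$ is read through the Chern connection, the torsion terms not affecting the sign at the maximum.

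\textbf{$2$-positivity of $\Ric_k$ $\Rightarrow$ BC-$2$.} Fix $x_0$ and an orthonormal pair $\{E_1,E_2\}$; as $k\ge 2$ I complete it to an orthonormal basis $\{F_1,\dots,F_k\}$ of a $k$-plane $\Sigma$ containing $E_1,E_2$. The K\"ahler symmetry $R_{F_i\bar F_i E_a\bar E_a}=R_{E_a\bar E_a F_i\bar F_i}$ yields
\[ \sum_{i=1}^{k}\big(R_{F_i\bar F_i E_1\bar E_1}+R_{F_i\bar F_i E_2\bar E_2}\big)=\Ric_k(x_0,\Sigma)(E_1,\overline{E}_1)+\Ric_k(x_0,\Sigma)(E_2,\overline{E}_2), \]
which is the trace of the Hermitian form $\Ric_k(x_0,\Sigma)$ over $\mathrm{span}\{E_1,E_2\}$, hence positive by $2$-positivity (the sum of any two of its eigenvalues being positive). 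Therefore some $F_i$ satisfies $R_{F_i\bar F_i E_1\bar E_1}+R_{F_i\bar F_i E_2\bar E_2}>0$, i.e. BC-$2$ holds, and projectivity follows from the previous paragraph.

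\textbf{$\Ric^\perp_{k+1}>0\Rightarrow$ BC-$p$, $p\ge k$.} First I would record the monotonicity $\Ric^\perp_{k+1}>0\Rightarrow\Ric^\perp_m>0$ for all $m\ge k+1$: for an $m$-plane $\Sigma$ and unit $v\in\Sigma$, the quantity $\Ric^\perp_m(\Sigma)(v,\bar v)$ is the trace over the $(m-1)$-plane $v^\perp\cap\Sigma$ (note $\dim(v^\perp\cap\Sigma)=m-1\ge k$) of the bisectional form $B_v(u,\bar u)=R(v,\bar v,u,\bar u)$, and averaging the positive quantities $\Ric^\perp_{k+1}(\Sigma')(v,\bar v)=\tr_{W'}B_v$, $\Sigma'=\mathrm{span}\{v\}\oplus W'$, over all $k$-planes $W'\subset v^\perp\cap\Sigma$ returns a positive multiple of $\tr_{v^\perp\cap\Sigma}B_v=\Ric^\perp_m(\Sigma)(v,\bar v)$. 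Given then an orthonormal $p$-frame $\{E_1,\dots,E_p\}$ with $k\le p\le n-1$, I choose a unit $X$ orthogonal to all $E_i$ and set $\Sigma=\mathrm{span}\{X,E_1,\dots,E_p\}$; then $\Ric^\perp_{p+1}(\Sigma)(X,\overline{X})=\sum_{i=1}^{p}R_{X\bar X E_i\bar E_i}>0$, which is BC-$p$. For the top case $p=n$ there is no such $X$, so I would instead use $\Ric^\perp=\Ric^\perp_n>0$ and the identity $S^\perp_n=\frac{n-1}{n+1}S$ following (\ref{eq:11}): since $S^\perp_n>0$ it forces $S>0$, whence $\sum_i\Ric(E_i,\overline{E}_i)=S>0$ and some $E_i$ has $\Ric(E_i,\overline{E}_i)=\sum_{j}R_{E_i\bar E_i E_j\bar E_j}>0$, giving BC-$n$.

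\textbf{Main obstacle.} The only genuinely analytic ingredient, the viscosity/comass estimate, is already furnished by Theorem \ref{thm:1}, so the three reductions above are essentially bookkeeping with traces of the curvature operator. The point that needs the most care is the matching in the first paragraph: one must be sure that the unitary frame forced on us by the comass maximizer at $x_0$ is exactly the frame against which BC-$p$ positivity is tested, so that (\ref{eq:23}) and (\ref{eq:BC-pq}) are literal negations of one another. The secondary delicate point is the non-K\"ahler setting, where one should verify that the torsion contributions to $\partial_v\partial_{\bar v}\log\det(g_{\alpha\bar\beta})$ do not reverse the sign at the maximum.
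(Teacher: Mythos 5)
Your proposal is correct and, for the first two assertions, coincides with the paper's own route: the paper's proof of the corollary literally reads ``by the proof of Theorem \ref{thm:61-add} and the definition,'' i.e.\ the comass maximizer forces (\ref{eq:23}) in the very frame $\{\frac{\partial}{\partial z^1},\cdots,\frac{\partial}{\partial z^p}\}$, which is the negation of (\ref{eq:BC-pq}) tested there, and your trace computation for the $2$-positivity of $\Ric_k$ is just the direct form of the paper's contrapositive (if $\sum_{i=1}^2 R_{v\bar{v}E_i\overline{E}_i}\le 0$ for all $v\in\Sigma$ then the two $\Ric_k$-traces sum to $\le 0$). Where you genuinely diverge is the last claim. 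For $p=k$ both arguments are the same (pick $X\perp\operatorname{span}\{E_1,\cdots,E_p\}$ and use $\Ric^{\perp}_{p+1}$ of the $(p+1)$-plane). But for $p>k$ the paper passes to scalar curvature: it invokes the average identity $\aint_{\mathbb{S}^{2k+1}\subset\Sigma}\Ric^{\perp}_{k+1}(Z,\overline{Z})\,d\theta(Z)=\frac{k}{(k+1)(k+2)}S_{k+1}(x)$ to get $S_{k+1}>0$, hence $S_p>0$ for $p\ge k+1$, and then cites Proposition \ref{prop:6-add-1}. You instead prove the Ricci-level monotonicity $\Ric^\perp_{k+1}>0\Rightarrow\Ric^\perp_m>0$ for $m\ge k+1$ by averaging $\operatorname{tr}_{W'}B_v$ over $k$-planes $W'\subset v^\perp\cap\Sigma$; this is valid precisely because the $H(v)$ term is subtracted out of $\Ric^\perp_k$ (which is why monotonicity holds here although, as the paper stresses, it fails for $\Ric_k$), and it yields a pointwise conclusion strictly stronger than the paper's scalar-curvature intermediary. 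Your separate treatment of $p=n$ via $S^\perp_n=\frac{n-1}{n+1}S>0$ also cleanly covers the boundary case that the paper's caveat ``if $k+1<n$'' leaves implicit.

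One correction to your closing remark on the Hermitian case: the assertion that ``the torsion contributions to $\partial_v\partial_{\bar v}\log\det(g_{\alpha\bar\beta})$ do not affect the sign at the maximum'' is false as stated if you keep holomorphic coordinates. For a non-K\"ahler Hermitian metric one cannot arrange $dg_{\alpha\bar\beta}=0$ at $x_0$ (torsion obstructs this), and chasing the first-derivative terms gives, at the maximum, only $\sum_{\alpha\le p}R_{v\bar{v}\alpha\bar{\alpha}}\le\sum_{\alpha\le p,\,\gamma>p}|\partial_v g_{\alpha\bar{\gamma}}|^2$, with the unwanted term entering with the unfavorable sign. The repair is to abandon coordinate coframes and take a \emph{holomorphic frame of $(T'N)^*$ normal at $x_0$ for the Chern connection} (metric equal to the identity with vanishing first derivatives at $x_0$): the comass comparison for the simple summand $\widetilde{\phi}$ is pointwise linear algebra insensitive to the choice of frame, the coefficient $a_{1\cdots p}$ remains holomorphic, and the computation then reproduces (\ref{eq:23}) verbatim with the Chern curvature. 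This is exactly what the paper's Remark after Theorem \ref{thm:1} sweeps under the rug, so it is a soft spot shared with the original rather than a gap in your argument's substance, but your one-line justification should be replaced by the normal-frame device.
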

In Theorem \ref{thm:6-added-ricciperp} the last statement is strengthen into {\it BC-$p$ positivity, $\forall p\ge 1$}.
Here we define $\Ric_k^{\perp}(x, \Sigma)(v, \bar{v})\doteqdot \Ric_k(x, \Sigma)(v, \bar{v})-H(v)/|v|^2$ for a $k$-dimensional subspace $\Sigma\subset T'_xN$ and $v\in \Sigma$. The positivity of $\Ric_k^{\perp}$ is defined as $\Ric_k$. Clearly $\Ric_1^{\perp}\equiv 0$. Note that $\Ric_2^{\perp}\ge 0$ is the same as the orthogonal bisectional curvature is nonnegative. $\Ric_n^{\perp}$ is the same as $\Ric^\perp$.

\begin{proof}
By the proof of Theorem \ref{thm:61-add} and the definition, we only need to show the last statement. The $2$-positivity of $\Ric_k$ means that for any $k$-dimensional $\Sigma\subset T_{x_0}'N$ and any two unitary orthogonal $E_1, E_2\in \Sigma$
$$
\Ric_k(x_0, \Sigma)(E_1, \overline{E}_1)+\Ric_k(x_0, \Sigma)(E_2, \overline{E}_2)>0.
$$
This clearly implies BC-$2$ positivity, since for any given unitary orthogonal $\{E_1, E_2\}$ there always a $k$-dimensional $\Sigma$ containing them, and
if $R(v,\bar{v}, E_1, \overline{E}_1)+R(v,\bar{v}, E_2, \overline{E}_2)\le 0, \forall v\in \Sigma$ it is easy to see that $\Ric_k(x_0, \Sigma)(E_1, \overline{E}_1)+\Ric_k(x_0, \Sigma)(E_2, \overline{E}_2)\le 0$.
If $\Ric^{\perp}_{k+1}>0$, it implies that for a unitary frame $\{E_i\}$ of a $k+1$-dimensional $\Sigma$ with $E_1=X/|X|$,
$$
\sum_{j=2}^{k+1} R(X, \overline{X}, E_j, \overline{E}_j)>0,
$$
which implies BC-$k$ positive. On the other hand, a simple calculation shows that
$$
\aint_{\mathbb{S}^{2k+1}\subset \Sigma\subset T_x'N} \Ric_{k+1}^\perp (Z, \overline{Z})\, d\theta(Z)=\frac{k}{(k+1)(k+2)} S_{k+1}(x).
$$
The case of $\ell>k$ follows by the proposition below if $k+1<n$.
\end{proof}

\begin{proposition}\label{prop:6-add-1} For a K\"ahler manifold $(N, h)$, $S_k(x_0)>0$ implies BC-$p$ positivity for any $p\ge k$, and  the $\ell$-positivity of $\Ric_k(x_0)$ (with $\ell\le k$) implies BC-$p$ positivivity for any $\ell\le p\le n$.
\end{proposition}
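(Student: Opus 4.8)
The plan is to reduce both statements to one averaging identity together with the monotonicity of the $k$-scalar curvatures recorded in Section 2. The basic computation I would use throughout is that, for an $m$-dimensional subspace $\Sigma\subset T'_{x_0}N$ with unitary frame $\{E_1,\dots,E_m\}$ and any fixed unit vector $X$,
$$\aint_{v\in\mathbb{S}^{2m-1}\subset\Sigma} R_{v\bar v X\overline{X}}\,d\theta(v)=\frac{1}{m}\sum_{a=1}^m R_{E_a\overline{E}_a X\overline{X}}=\frac{1}{m}\,\Ric_m(x_0,\Sigma)(X,\overline{X}),$$
exactly as in the proof of Theorem \ref{thm:61-add}. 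Summing over a collection of fixed directions converts a BC-type sum into a trace of a principal block of the restricted Ricci form, and positivity of the average then produces the required vector $v$.

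For the first statement I would fix $p\ge k$ and orthonormal $\{F_1,\dots,F_p\}$, set $\Sigma=\operatorname{span}\{F_1,\dots,F_p\}$, and average $\sum_{i=1}^p R_{v\bar v F_i\overline{F}_i}$ over $v\in\mathbb{S}^{2p-1}\subset\Sigma$. By the identity above this average equals $\frac1p\,S_p(x_0,\Sigma)$, the restricted scalar curvature of $\Sigma$, so it suffices to show $S_p(x_0,\Sigma)>0$. This is the monotonicity $S_k>0\Rightarrow S_p>0$ for $p\ge k$: averaging $S_k(x_0,\Sigma')$ over the Grassmannian of $k$-dimensional $\Sigma'\subset\Sigma$ recovers a positive multiple of $S_p(x_0,\Sigma)$ (both are proportional to the mean of $H$ over the respective unit spheres, with positive combinatorial constants), and every integrand is positive. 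Hence the average of the BC-sum is positive and some $v\in\Sigma$ realizes $\sum_{i=1}^p R_{v\bar v F_i\overline{F}_i}>0$, i.e. BC-$p$ positivity.

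For the second statement I would split on the size of $p$. The key linear-algebra fact is that $\ell$-positivity of a Hermitian form propagates upward: if $\lambda_1\le\cdots\le\lambda_k$ are its eigenvalues and $\lambda_1+\cdots+\lambda_\ell>0$, then $\lambda_\ell\ge\frac1\ell(\lambda_1+\cdots+\lambda_\ell)>0$, so $\lambda_{\ell+1},\dots,\lambda_k$ are all positive and the sum of the smallest $p$ eigenvalues stays positive for every $p\ge\ell$. Applied to $\Ric_k(x_0,\Sigma)$, this shows $\ell$-positivity of $\Ric_k$ forces $p$-positivity of $\Ric_k$ for $\ell\le p\le k$. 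Then, given orthonormal $\{F_1,\dots,F_p\}$ with $\ell\le p\le k$, I would extend their span to a $k$-dimensional $\Sigma$ and average the BC-sum over $v\in\Sigma$ to get $\frac1k\sum_{i=1}^p\Ric_k(x_0,\Sigma)(F_i,\overline{F}_i)>0$, yielding BC-$p$ positivity in this range. For $p\ge k$ I would instead take the full trace ($p=k$) of the $\ell$-positivity hypothesis, which gives $S_k(x_0,\Sigma)=\sum_{i,j=1}^k R_{i\bar i j\bar j}>0$ for every $k$-dimensional $\Sigma$, i.e. $S_k(x_0)>0$; the first statement then delivers BC-$p$ positivity for all $p\ge k$. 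The two ranges overlap at $p=k$ and together cover $\ell\le p\le n$.

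The step I expect to be the main obstacle is precisely the regime $p>k$ in the second statement: one cannot embed $p$ orthonormal vectors into a $k$-dimensional subspace, so the direct averaging argument collapses and one must detour through the scalar-curvature positivity $S_k(x_0)>0$ and reuse the first part. The remaining care is bookkeeping—checking that the eigenvalue inequality is oriented so that $\ell$-positivity genuinely propagates to larger $p$ (not smaller), and that the monotonicity of $S_p$ runs in the direction $p\ge k$—whereas the averaging identities themselves are routine.
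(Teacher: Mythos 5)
Your proof is correct and takes essentially the same route as the paper: both parts reduce BC-$p$ positivity to positivity of an average (equivalently, a frame sum) of the BC-sum over a suitable subspace, with the case $p\ge k$ handled via the monotonicity $S_k>0\Rightarrow S_p>0$ and the case $\ell\le p<k$ by embedding the $p$ orthonormal vectors into a $k$-dimensional $\Sigma$ and invoking the $\ell$-positivity of $\Ric_k(x_0,\Sigma)$. The only cosmetic differences are that you argue directly via positive averages where the paper argues by contradiction, and that you spell out the eigenvalue-propagation lemma and the trace step ($\ell$-positivity $\Rightarrow S_k(x_0)>0$) which the paper's phrases ``violates the $\ell$-positivity'' and ``follows from the first'' leave implicit.
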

\begin{proof} Note that $S_p>0$ implies  BC-$p$ positivity. The first claim follows from  that $S_k>0  \implies S_p>0$ for any $p\ge k$. For the second statement, if  $p\ge k$, the result follows from the first. If $\ell\le p<k$, for  unitary $p$-vectors $\{E_1, \cdots, E_p\}$ we choose a $k$ subspace  $\Sigma$ containing them. If
$\sum_{i=1}^p R(v,\bar{v}, E_i, \overline{E}_i)\le 0, \forall v\in \Sigma$, it implies that
$\sum_{i=1}^p \Ric_k(x_0, \Sigma)(E_i, \overline{E}_i)\le 0$. This violates the  $\ell$-positivity of $\Ric_k(x_0)$.
\end{proof}

One can also extend the definition of $\Ric_k$ to a Hermitian vector bundle over Hermitian manifolds.  Let $R=R_{\alpha\bar{\beta}i}^{\quad j}dz^{\alpha}\wedge d\bar{z}^{\beta}\otimes e_i^*\otimes e_j$ be the curvature of a Hermitian vector bundle $(V, h)$ over a Hermitian manifold.

\begin{definition}\label{def:rick} Let $\Sigma\subset T'_{x_0}N$ and $\sigma\subset V_{x_0}$ be two $k$-dimensional subspaces.
Define for $X\in T'_{x_0}N$, $v=v^i e_i\in V_{x_0}$ with $\{e_k\}_{k=1}^{L}$ being a unitary frame of $V_{x_0}$, $L=\dim(V_{x_0})$, the first and second $\Ric_k$ as follows:
$$
\Ric^1_k(x_0, \sigma)(X, \overline{X})=\sum_{i=1}^k R_{X \overline{X}\, s}^{\quad\quad r} a^s_i\overline{a_i^t} h_{r\bar{t}}; \quad \Ric^2_k(x_0, \Sigma)(v, \bar{v})=\sum_{\alpha=1}^kR_{E_\alpha \overline{E}_{\alpha}\, i}^{\quad\quad j} v^i \bar{v}^l h_{j\bar{l}},
$$
with $\{E_\alpha\}_{\alpha=1}^k$ being a unitary frame of $\Sigma$, and  $\{\tilde{e}_i\}_{i=1}^k$ being a unitary frame of $\sigma$. Here $\tilde{e}_i=\sum_{k=1}^L a_i^k e_k$.
\end{definition}
Note that $\Ric^1$ is a $(1,1)$-form of $N$, and it coincides with the first Chern-Ricci of a Hermitian manifold if $k=n$ and  $V=T'N$. Observe that for $V=T'N$, $\left.\Ric^1_k\right|_{\sigma}$ is $\Ric_k(x_0, \sigma)$ when $N$ is K\"ahler, and generalizes $\Ric_k$ to the case of $N$ being just Hermitian.

The Corollary \ref{coro:6-add-3} generalizes the main theorem of \cite{Ni-Zheng2}. Towards the rational connectedness we prove the following result.

\begin{theorem}\label{thm:61} Let $(N^n, h)$ be a compact projective manifold with $\Ric_k>0$ for some $k\in \{1, \cdots, n\}$. Then (\ref{eq-prop:61}) holds, and  $N$ must be rationally connected.
\end{theorem}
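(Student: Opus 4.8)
The plan is to deduce rational connectedness from the cohomological criterion of Campana--Demailly--Peternell (Theorem~\ref{prop:61}). Since $N$ is assumed projective (which under $\Ric_k>0$ also follows from Theorem~\ref{thm:61-add}), that criterion reduces the whole statement to verifying \eqref{eq-prop:61}, i.e. $H^0(N,((T'N)^*)^{\otimes p}\otimes L^{\otimes\ell})=\{0\}$ for all $p\ge C(L)\ell$, with $C(L)$ a constant depending on $L$ and on the curvature bounds of $(N,h)$. Thus the real content is a vanishing theorem for holomorphic sections of a high tensor power of the cotangent bundle twisted by a fixed positive line bundle, and this is exactly where the viscosity/comass method of Section~3 enters.

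I would argue by contradiction: suppose $\sigma\neq0$ is a holomorphic section of $E:=((T'N)^*)^{\otimes p}\otimes L^{\otimes\ell}$, and equip $L$ with a metric whose Chern curvature $\theta_L$ satisfies $0<\theta_L\le C_1\,\omega_h$. I would then run the maximum principle of Section~3 on a comass-type (operator-norm) function of $\sigma$. At a maximum point $x_0$, choosing a unitary normal frame and a simple comparison tensor $\widetilde\sigma$ as in the passage leading to \eqref{eq:23}, the inequality $\partial_v\partial_{\bar v}\log\|\widetilde\sigma\|^2\le0$, the holomorphicity of $\sigma$, and the splitting $\Theta(E)=\Theta\bigl(((T'N)^*)^{\otimes p}\bigr)\otimes\id+\id\otimes\ell\,\theta_L$ yield, for the slot directions $f_1,\dots,f_p\in T'_{x_0}N$ of $\widetilde\sigma$ and every $v$,
\begin{equation*}
\sum_{m=1}^p R(v,\overline v,f_m,\overline f_m)\ \le\ \ell\,\theta_L(v,\overline v)\ \le\ C_1\ell\,|v|^2 .
\end{equation*}
This is the tensor-power analogue of \eqref{eq:23}: the dual bundle contributes the negative bisectional terms $-R(v,\bar v,f_m,\bar f_m)$ summed over the $p$ slots, while $L^{\otimes\ell}$ contributes the controlled positive term $\ell\theta_L$.

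Next I would harvest $\Ric_k>0$, which by compactness means $\Ric_k\ge\delta>0$. If the slot directions $f_m$ lay in a common $k$-dimensional subspace $\Sigma\subset T'_{x_0}N$, then averaging the displayed inequality over the unit sphere of $\Sigma$ and using $\aint_{v\in\Sigma}R(v,\bar v,f_m,\bar f_m)\,d\theta(v)=\tfrac1k\Ric_k(x_0,\Sigma)(f_m,\overline f_m)\ge\tfrac\delta k$ (exactly the averaging in the proof of Theorem~\ref{thm:61-add}) would give $\tfrac{p\delta}{k}\le C_1\ell$, a contradiction once $p>\tfrac{kC_1}{\delta}\ell$; this choice fixes $C(L):=kC_1/\delta$. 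Then \eqref{eq-prop:61} holds, and with Theorem~\ref{prop:61} so does rational connectedness.

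The main obstacle is precisely the alignment assumption just used. For the antisymmetric comass of Theorem~\ref{thm:1} the simple maximizer $dz^1\wedge\cdots\wedge dz^p$ automatically has orthonormal slots lying in one subspace, but a section of the \emph{full} tensor power can be spread out, with the $f_m$ filling all of $T'_{x_0}N$; since $\Ric_k>0$ controls only the trace over a $k$-plane of vectors lying \emph{inside} that $k$-plane, and does not imply $\Ric>0$, a naive average over a fixed $\Sigma$ leaves uncontrolled off-$\Sigma$ contributions $R(v,\bar v,f_m,\bar f_m)$ of indefinite sign. To overcome this I would bring in the second-variation estimate of \cite{Ni-Zheng2}: at the maximum the operator norm is stationary along the fibers of the flag determined by $\sigma$, and the associated second-variation inequality in these fiber directions confines the dominant slot directions to a single $k$-dimensional step of the flag, so that the $\Ric_k$ average above applies to the bulk of the mass while the residual fiber terms are absorbed. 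Making this flag/fiber bookkeeping precise — so that the gain $p\delta/k$ survives the off-diagonal corrections — is the crux; once it is in place the contradiction for $p\ge C(L)\ell$, and hence \eqref{eq-prop:61}, follows at once.
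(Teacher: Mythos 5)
You have the right skeleton (reduce to the Campana--Demailly--Peternell criterion, run a maximum principle on a norm of the section, average over the unit sphere of a $k$-plane, and invoke the second-variation estimates of \cite{Ni-Zheng2}), and you correctly diagnose the crux: the slot directions of $\sigma$ at the maximum point need not lie in any single $k$-plane, while $\Ric_k$ only sees vectors inside a $k$-plane. But your proposal does not close this gap --- it ends by declaring the ``flag/fiber bookkeeping'' to be the crux --- and the mechanism you sketch for closing it is not the one that works. The second variation in \cite{Ni-Zheng2} (Proposition~\ref{prop:31} in the paper) is \emph{not} a stationarity of the operator norm of $\sigma$ along fibers of a flag determined by $\sigma$, and it does not ``confine the dominant slot directions to a single $k$-dimensional step.'' The slot directions remain completely arbitrary. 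Instead, at the maximum point $x_0$ of $|s|^2$ one chooses $\Sigma$ to \emph{minimize} $S_k(x_0,\cdot)$ over the Grassmannian $G_{k,n}(T'_{x_0}N)$; first variation of this minimum kills all cross terms, $\aint_{v\in\Sigma} R(E,\overline{E}',v,\bar v)\,d\theta(v)=0$ for $E\in\Sigma$, $E'\perp\Sigma$, and second variation yields the uniform lower bound
\begin{equation*}
\aint_{v\in\Sigma} R(E',\overline{E}',v,\bar v)\,d\theta(v)\ \ge\ \frac{S_k(x_0,\Sigma)}{k(k+1)}\ \ge\ \frac{\delta_1}{k(k+1)}
\end{equation*}
for every $E'\perp\Sigma$. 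Writing an arbitrary slot direction as $\mu E+\beta E'$ with $E\in\Sigma$, $E'\in\Sigma^\perp$, the $\Sigma$-component is handled by $\Ric_k\ge\delta_2$ and the orthogonal component by the display above, so \emph{every} slot direction satisfies $\aint_{v\in\Sigma} R_{v\bar v\,i_\alpha\bar i_\alpha}\,d\theta(v)\ge\min(\delta_1,\delta_2)/(k(k+1))$, no alignment needed; summing over the $p$ slots against $\ell A$ gives the contradiction for $p\ge C(L)\ell$.

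A secondary misstep: there is no need for a comass or viscosity argument here at all. Unlike the skew-symmetric setting of Section~3, the section $s$ is a smooth tensor and $|s|^2$ (the induced $L^2$-norm) is smooth, so one applies $\partial_v\bar\partial_{\bar v}$ to $|s|^2$ directly via the Bochner formula \eqref{eq:61}; the curvature term there is already a weighted sum over \emph{all} multi-indices $I_p$, which is exactly why the pointwise-in-every-direction lower bound above suffices, whereas your reduction to a single simple comparison tensor $\widetilde\sigma$ with aligned orthonormal slots is precisely what is unavailable for full tensor powers. (For $k=1$ the paper's route is Berger's argument at a minimizing vector $v$ of $H$, giving $R_{v\bar v w\bar w}\ge\delta/2$ for all $w$ --- again positivity against all slot directions, the same theme.)
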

\begin{proof} Before the general case, we start with a proof for the special case $k=1$ by proving the above criterion in Theorem \ref{prop:61} directly via the $\partial\bar{\partial}$-Bochner formula.
Let $s$ be a holomorphic section in $H^0(N, ((T'N)^*)^{\otimes p} \otimes L^{\otimes \ell})$. Locally it can be expressed as
$$
s=\sum_{I_p} a_{I_p, \ell}dz^{i_1}\otimes \cdots \otimes dz^{i_p} \otimes e^\ell
$$
with $I_p=(i_1, \cdots, i_p)\in \mathbb{N}^p$, and $e$ being a local holomorphic section of $L$ and $e^\ell =e\otimes \cdots\otimes e$ being the $\ell$ power of $e$. Equip $L$ with a Hermitian metric $a$ and let $C_a$ be the corresponding curvature form. The point-wise norm $|s|^2$ is with respect to the induced metric of $((T'N)^*)^{\otimes p}$ and $L^{\otimes \ell})$. The $\partial\bar{\partial}$-Bochner formula implies that for any $v\in T'_xN$:
\begin{eqnarray}
\partial_v\bar{\partial}_{\bar{v}} |s|^2 &=&|\nabla_v s|^2+\sum_{I_p}\sum_{t=1}^n\sum_{\alpha=1}^p \langle a_{I_p, \ell} R_{v\bar{v} i_\alpha \bar{t}}dz^{i_1}\otimes \cdots \otimes dz^{i_{\alpha-1}}\otimes dz^t\otimes\cdots\otimes dz^{i_p}\otimes e^{\ell}, \bar{s}\rangle \nonumber\\
&\quad&-\sum_{I_p} \langle a_{I_p, \ell} \ell C_a(v,\bar{v}) dz^{i_1}\otimes \cdots \otimes dz^{i_p} \otimes e^\ell, \bar{s}\rangle.\label{eq:61}
\end{eqnarray}
Applying the above equation at the point $x_0$,  where $|s|^2$ attains its maximum, with respect to a normal coordinate centered at $x_0$. Pick a unit vector $v$ such that $H(v)$ attains its minimum on $\mathbb{S}^{2n-1}\subset T_{x_0}'N$. By the assumption $H>0$, there exists a $\delta>0$ such that $H(v)\ge \delta$ for any unit vector and any $x\in N$.  Diagonalize $R_{v\bar{v}(\cdot)\overline{(\cdot)}}$ by a suitable chosen unitary frame $\{\frac{\partial}{\partial z^1}, \cdots, \frac{\partial}{\partial z^n}\}$.  Applying the first and  second derivative tests,   it shows that if at $v\in \mathbb{S}^{2n-1}$, $H(v)$ attains its minimum, then
$
R_{v\bar{v}w\bar{w}}\ge \frac{\delta}{2},
$
and $R_{v\bar{v}v\bar{w}}=0$,
for any $w$ with $|w|=1$, and $\langle w, \bar{v}\rangle =0$. This implies that
$$
R_{v\bar{v} i_\alpha \bar{i}_\alpha}=|\mu_1|^2 R_{v\bar{v}v\bar{v}}+|\beta_1|^2 R_{v\bar{v}w\bar{w}}\ge \frac{\delta}{2}
$$
 where we write $\frac{\partial}{\partial z^{i_\alpha}}=\mu_1 v+\beta_1 w$ with $|\mu_1|^2+|\beta_1|^2=1$, $w\in \{v\}^{\perp}$ and $|w|=1$. (This perhaps goes back to the work of Berger. See also for example \cite{XYang} or Corollary 2.1 of \cite{NZ}.)
If $A$ is the upper bound of $C_a(v, \bar{v})$, we have that
$$
0\ge \partial_v\bar{\partial}_{\bar{v}} |s|^2\ge \left(\frac{p\delta}{2}-\ell A\right) |s|^2.
$$
This is a contradiction for $p\ge \frac{3A \ell}{\delta}$ if $s\ne 0$. Hence we can conclude that for any $p\ge C(L)\ell$ with $ C(L)=\frac{3A}{\delta}$, $H^0(N, ((T'N)^*)^{\otimes p} \otimes L^{\otimes \ell})=\{0\}$.

For the general case, namely $\Ric_k>0$ for some $k\in \{1, \cdots, n\}$,  we combine the argument above with the second variation result of \cite{Ni-Zheng2}. At the point $x_0$ where the maximum of $|s|^2$ is attained, we pick $\Sigma$ such that $S_k(x_0, \Sigma)$ attains its minimum $\delta_1>0$. For simplicity of the notations, we denote the average of a function $f(X)$ over the unit sphere $\mathbb{S}^{2k-1}$ in $\Sigma$  by $\aint f(X)$. The second variation consideration in \cite{Ni-Zheng2} gives the following useful estimates.

\begin{proposition}[Proposition 3.1 of \cite{Ni-Zheng2}]\label{prop:31} Let $\{ E_1, \ldots , E_m\}$ be a unitary frame at $x_0$ such that  $\{E_i\}_{1\le i\le k}$ spans $\Sigma$.  Then for any $E\in \Sigma$, $E'\perp \Sigma$, and any $k+1\leq p\leq m$, we have
\begin{eqnarray}
\aint R(E,\overline{E}', Z, \overline{Z})d\theta(Z)=\aint R(E', \overline{E}, Z, \bar{Z})d\theta(Z)&=&0, \label{eq:62-1st} \\
 \aint R(E_p,\overline{E}_p, Z,   \overline{Z})\, d\theta(Z)&\ge&\frac{S_k(x_0,\Sigma)}{k(k+1)}. \label{eq:64}
\end{eqnarray}
\end{proposition}
\begin{proof} For the convenience of the reader we include the proof.  The proof  uses the first and second variation out of the fact that $S_k(x_0, \Sigma)$ is minimum.  Let $a\in \mathfrak{u}(m)$ be an element of the Lie algebra of $\mathsf{U}(m)$. Consider the function:
$$
f(t)=\aint H(e^{t a} X)\, d\theta(X).
$$
By the choice of $\Sigma$,  $f(t)$ attains its minimum at $t=0$. This implies that $f'(0)=0$ and $f''(0)\ge 0$. Hence
\begin{eqnarray}
&\, &\aint \left( R(a(X), \overline{X}, X, \overline{X}) +R(X, \bar{a}(\overline{X}), X, \overline{X})\right)\,
d\theta(X)=0; \label{eq:nz25}\\
&\,& \aint \left(R(a^2(X), \overline{X}, X, \overline{X}) +R(X, \bar{a}^2(\overline{X}), X, \overline{X})+4R(a(X),
\bar{a}(\overline{X}), X, \overline{X})\right) d\theta(X) \nonumber\\
&\, & + \aint \left( R(a(X), \overline{X}, a(X), \overline{X})+ R(X, \bar{a}(\overline{X}), X,
\bar{a}(\overline{X})\right) d\theta(X) \ge 0. \label{eq:nz26}
\end{eqnarray}
We exploit these by looking into some special cases of $a$. Let  $W\perp \Sigma$ and $Z\in \Sigma$ be two fixed vectors.
Let $a=\i \left(Z\otimes \overline{W}+W\otimes \overline{Z}\right)$. Then
$$
a(X)=\i \langle X, \overline{Z}\rangle W;\quad  a^2(X)=-\langle X, \overline{Z}\rangle Z.
$$
Applying (\ref{eq:nz26}) to the above $a$ and also the one with $W$ being replaced by $\i W$, and add the resulting two
estimates together,   we have that
\begin{equation}\label{eq:nz27}
4\aint |\langle X, \overline{Z}\rangle|^2R(W, \overline{W}, X, \overline{X}) d\theta(X)\ge \aint \langle X,
\overline Z\rangle  R(Z, \overline{X}, X, \overline{X}) +\langle Z, \overline{X}\rangle R(X, \overline{Z}, X,
\overline{X}).
\end{equation}

Applying the above to $Z\in \mathbb{S}^{2k-1}\subset \Sigma$ and taking  the average of the result we then have
$$
\frac{4}{k} \aint R(W, \overline{W}, X, \overline{X}) d\theta(X)\ge \frac{2}{k} \aint R(X, \overline{X}, X,
\overline{X}).
$$
This proves (\ref{eq:64}).
 By combining (\ref{eq:nz25}) (with $a$ as above) and the one with $W$ being replaced by $\i W$, we obtain two equalities:
$$\aint \langle X, \overline{Z}\rangle R(W, \overline{X}, X, \overline{X})=\aint \langle Z, \overline{X}\rangle R(X, \overline{W}, X, \overline{X})=0.
$$
Now write $X=x_1E_1+x_2 E_2+\cdots+x_k E_k$. Let $Z=E_i, W=E_{\ell}$ (for $i=1, 2$, $\ell\ge k+1$). Direct calculation (with $Z=E_1$) shows that
$$
\aint R_{\ell\bar{1}1\bar{1}} |x_1|^4+2\sum_{j=2}^k R_{\ell\bar{1}j\bar{j}}|x_1|^2|x_j|^2 =0.
$$
Applying the integral identities in the proof of the Berger's lemma (cf. Lemma 1.1 of  \cite{Ni-Zheng2}),
the above equation  (together with the case $Z=E_i$ with $2\le i\le k$) implies that
\begin{equation}\label{eq:nz27sub1}
\sum_{j=1}^k R_{\ell \bar{i}j\bar{j}}=0, \forall 1\le i\le k, k+1\le \ell \le n.
\end{equation}
This and its conjugate  imply (\ref{eq:62-1st}).
\end{proof}
As \cite{Ni-Zheng2}, we may choose the frame so that $\aint R_{v\bar{v} (\cdot)\overline{(\cdot)}}$ is diagonal.
Integrating (\ref{eq:61}) over the unit sphere $\mathbb{S}^{2k-1}\subset \Sigma$ we have that
\begin{eqnarray*}
0&\ge & \aint \partial_v\bar{\partial}_{\bar{v}} |s|^2\, d\theta(v)\ge \sum_{I_p} |a_{I_p, \ell}|^2 \aint \left(\sum_{\alpha=1}^p R_{v\bar{v} i_\alpha \bar{i}_\alpha}-\ell C_a(v, \bar{v})\right)\, d\theta(v).
\end{eqnarray*}
Here we have chosen a unitary frame $\{\frac{\partial}{\partial z^1}, \cdots, \frac{\partial}{\partial z^n}\}$ so that $\aint R_{v\bar{v}(\cdot)\overline{(\cdot)}} \, d\theta(v)$ is diagonal.

As in \cite{Ni-Zheng2}, decompose $\frac{\partial}{\partial z^i}$ into the sum of $\mu_i E_i\in \Sigma$ and $\beta_i E_i'\in \Sigma^\perp$ with $|E_i|=|E_i'|=1$ and $|\mu_i|^2+|\beta_i|^2=1$. If we denote the lower bound of $\Ric_k$ by $\delta_2>0$, by (\ref{eq:62-1st}) and (\ref{eq:64})
\begin{eqnarray*}
\aint R_{v\bar{v}1\bar{1}}\, d\theta(v) &=&|\mu_1|^2 \aint R_{v\bar{v}E_1\overline{E}_1}\, d \theta(v) +|\beta_1|^2\aint R_{v\bar{v}E_1'\overline{E}_1'}\, d\theta(v)\\
&= &\frac{|\mu_1|^2}{k} \Ric_k(E_1, \overline{E}_1)+ |\beta_1|^2\aint R_{v\bar{v}E_1'\overline{E}_1'}\, d\theta(v)\ge \frac{|\mu_1|^2}{k}\delta_2+\frac{|\beta_1|^2}{k(k+1)}\delta_1\\
&\ge&   \frac{\min{(\delta_1, \delta_2)}}{k(k+1)}.
\end{eqnarray*}
The above estimate holds for any $\frac{\partial}{\partial z^{i_\alpha}}$ as well. Hence combining  two estimates above  we have that
$$0\ge  \aint \partial_v\bar{\partial}_{\bar{v}} |s|^2\, d\theta(v)\ge \left(p \frac{\min{(\delta_1, \delta_2)}}{k(k+1)}-\ell A\right)|s|^2.
$$
The same argument as the special case $k=1$ leads to a  contradiction,  if $p\ge C(L) \ell$ for suitable chosen $C(L)$, provided that $s\ne 0$. This proves the vanishing theorem claimed in Theorem \ref{prop:61} for manifolds with $\Ric_k>0$. \end{proof}

The simply-connectedness part of Theorem  \ref{thm:4} follows from Theorem \ref{thm:61-add}, Theorem \ref{thm-diameter}, and the argument of \cite{Ko} (recalled in the introduction) via Hirzebruch's Riemann-Roch theorem. It can also be inferred from the rational connectedness and Corollary 4.29 of \cite{Deb}. It is expected that the construction via the projectivization in \cite{NWZ, YZ} would give more examples of K\"ahler manifolds with $\Ric_k>0$.

Regarding rational connectedness we should point out that there exists a recent work \cite{Yang18}, in which it was proved that if $T'N$ is {\it uniformly $RC$-positive} in the sense that for any $x\in N$, there exists a $X$ such that $R(X, \overline{X}, V, \overline{V})>0$ for any $V\in T_xN$, then $N$ is projective and rationally connected. As pointed out above, BC-2 positivity (which follows from the uniform RC-positivity) already implies the projectivity. The uniform RC-positivity is equivalent to $$\delta\doteqdot \min_{x\in N} \left(\max_{|X|=1, X\in T'_xN} \left(\min_{|V|=1, V\in T'_xN} R(X, \overline{X}, V, \overline{V})\right)\right)>0.$$  Hence one can derive Theorem \ref{prop:61} from (\ref{eq:61}) directly by letting $v=X$  with $X$ being the vector which attains the maximum in the above definition, and $p\ge \frac{2A}{\delta}$. This provides a direct proof of Theorem 1.3 in \cite{Yang18}.

 Since the boundedness of smooth Fano varieties (namely there are finitely many deformation types) was also proved in \cite{KMM}, it is natural to ask {\it  whether or not  the family of K\"ahler manifolds with $\Ric_k>0$ (for some $k$, particularly for $n$ large and $n-k\ne 0$ small) is bounded}. Before one proves that every K\"ahler manifold with $\Ric^\perp>0$ is Fano, it remains an interesting  future project to {\it investigate the rational connectedness of compact K\"ahler manifols with $\Ric^\perp>0$}. For manifold with  QB$>0$, as a simple consequence  of the results in  the next section   and the result of \cite{Cam, KMM} we have the following corollary.

 \begin{corollary}\label{coro:61}
Any compact K\"ahler manifold $(N, h)$ with quasi-positive QB, or more generally with quasi-positive $\Ric^\perp$, and $\rho(N)=1$ must be rationally connected.
 \end{corollary}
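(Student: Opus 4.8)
The plan is to reduce the claim to the combination of Theorem~\ref{thm:c1}(i) and the Campana/Koll\'ar-Miyaoka-Mori result for Fano manifolds. The essential observation is that rational connectedness is already known for Fano manifolds by \cite{Cam, KMM}, so it suffices to prove that the hypotheses of the corollary force $N$ to be Fano. I would treat the two stated cases (quasi-positive QB, and the more general quasi-positive $\Ric^\perp$) by noting that the former reduces to the latter.

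\emph{First I would handle the reduction from QB to $\Ric^\perp$.} As recorded in the excerpt, QB$>0$ implies $\Ric^\perp>0$ pointwise; the same implication holds at the level of quasi-positivity, so a manifold with quasi-positive QB has quasi-positive $\Ric^\perp$. Moreover, as remarked just before the corollary, the quasi-positivity of QB forces $h^{1,1}(N)=1$, hence (since $N$ is already known to be projective under $\Ric^\perp\ge 0$ with strict positivity somewhere) the Picard number satisfies $\rho(N)=1$. Thus the QB hypothesis is a special case of the $\Ric^\perp$ hypothesis together with $\rho(N)=1$, and it remains only to treat the general statement: $N$ compact K\"ahler with $\Ric^\perp$ quasi-positive and $\rho(N)=1$.

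\emph{Next I would invoke Theorem~\ref{thm:c1}(i) directly.} That theorem asserts precisely that a compact K\"ahler manifold with quasi-positive $\Ric^\perp$ and Picard number $\rho(N)=1$ must be Fano; indeed it even produces a K\"ahler metric with positive Ricci curvature on $N$. Once $N$ is Fano, I would apply \cite{Cam, KMM} to conclude that $N$ is rationally connected. Since Fano manifolds are automatically projective, no separate projectivity argument is needed at this stage.

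\emph{The main obstacle} is not in the corollary itself---which is a short deduction---but in the input it relies on, namely Theorem~\ref{thm:c1}(i), whose proof must show that quasi-positivity of the non-tensorial quantity $\Ric^\perp$ together with $\rho(N)=1$ yields $c_1(N)>0$. The delicate point there is that $\Ric^\perp$ does not arise from a Hermitian sesquilinear form, so one cannot directly read off positivity of the canonical class; the hypothesis $\rho(N)=1$ (equivalently $h^{1,1}=1$ under the established vanishing $h^{2,0}=0$) is what allows one to transfer the quasi-positivity of $\Ric^\perp$ into positivity of the single generator of $H^{1,1}$, presumably via a maximum-principle or K\"ahler--Ricci flow argument establishing that the first Chern class lies in the open positive cone. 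Granting Theorem~\ref{thm:c1}, the corollary follows immediately as above.
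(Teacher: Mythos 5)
Your proposal is correct and follows essentially the same route as the paper: the corollary is stated there as "a simple consequence" of Theorem~\ref{thm:c1}(i) (quasi-positive $\Ric^\perp$ together with $\rho(N)=1$ implies $N$ is Fano) combined with the rational connectedness of Fano manifolds from \cite{Cam, KMM}, with the QB case reduced exactly as you do, via the facts that quasi-positive QB implies quasi-positive $\Ric^\perp$ and forces $h^{1,1}(N)=1$. One minor aside: your guess about the internals of Theorem~\ref{thm:c1} is off---its proof is not a maximum-principle or K\"ahler--Ricci flow argument but the Berger-type averaging identity of Lemma~\ref{lm:Gold}, which converts quasi-positivity of $\Ric^\perp$ into $S\ge 0$ with $S>0$ somewhere, hence $d(K_N^{-1})>0$ and $[c_1(N)]=\ell\,[\omega_h]$ with $\ell>0$ under $h^{1,1}=1$, after which Yau's theorem applies---but since you use that theorem only as a black box, this does not affect the validity of your deduction.
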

The same conclusion  holds if $\Ric^\perp\ge 0$, $(N^n, h)$ is locally irreducible  and $\rho(N)=1$.

\section{Compact K\"ahler manifolds with $h^{1, 1}=1$ and CQB}

Recall the following result from \cite{NZ}, which is a consequence of a formula of Berger.
\begin{lemma} \label{lm:Gold} Let $(N^n, h)$ be a K\"ahler manifold of complex dimension $n$.  At any point $p\in N$, \begin{equation}\label{eq:scalar-ricciperp}
\frac{n-1}{n(n+1)}S(p)=\frac{1}{Vol(\mathbb{S}^{2n-1})}\int_{|Z|=1, Z\in T_p'N} \Ric^{\perp}(Z, \overline{Z})\, d\theta(Z)
\end{equation}
where $S(p)=\sum_{i=1}^n \Ric(E_i, \overline{E}_i)$ (with respect to any unitary frame $\{E_i\}$) denotes the scalar curvature at $p$
\end{lemma}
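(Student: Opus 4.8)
The plan is to reduce the identity \eqref{eq:scalar-ricciperp} to two standard moment computations on the complex unit sphere. Fix a unitary frame $\{E_\alpha\}$ of $T'_pN$, write $Z=\sum_\alpha Z^\alpha E_\alpha$ for $Z\in\mathbb{S}^{2n-1}$, and recall that $\aint$ denotes the normalized average $\frac{1}{Vol(\mathbb{S}^{2n-1})}\int_{\mathbb{S}^{2n-1}}$. The whole statement rests on the two averages
\begin{equation*}
\aint Z^\alpha\,\overline{Z^\beta}\, d\theta(Z)=\frac{1}{n}\delta_{\alpha\beta},\qquad \aint Z^\alpha\,\overline{Z^\beta}\,Z^\gamma\,\overline{Z^\delta}\, d\theta(Z)=\frac{1}{n(n+1)}\left(\delta_{\alpha\beta}\delta_{\gamma\delta}+\delta_{\alpha\delta}\delta_{\gamma\beta}\right),
\end{equation*}
both of which follow from the $\mathsf{U}(n)$-invariance of the measure together with an elementary normalization check (e.g. $\sum_{\alpha}\aint|Z^\alpha|^2\, d\theta(Z)=1$ and $\sum_{\alpha,\gamma}\aint|Z^\alpha|^2|Z^\gamma|^2\, d\theta(Z)=1$ fix the overall constants).

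Next I would expand $\Ric^\perp$. Since $|Z|=1$ gives $\Ric^\perp(Z,\overline Z)=\Ric(Z,\overline Z)-R(Z,\overline Z,Z,\overline Z)$, I treat the two pieces separately. Writing $R_{\alpha\bar\beta}=\Ric(E_\alpha,\overline E_\beta)$, the first moment gives
\begin{equation*}
\aint \Ric(Z,\overline Z)\, d\theta(Z)=\sum_{\alpha,\beta}R_{\alpha\bar\beta}\,\aint Z^\alpha\,\overline{Z^\beta}\, d\theta(Z)=\frac{1}{n}\sum_\alpha R_{\alpha\bar\alpha}=\frac{1}{n}S(p).
\end{equation*}
For the holomorphic sectional curvature, writing $R(Z,\overline Z,Z,\overline Z)=\sum R_{\alpha\bar\beta\gamma\bar\delta}Z^\alpha\overline{Z^\beta}Z^\gamma\overline{Z^\delta}$ and applying the fourth moment yields $\frac{1}{n(n+1)}\sum_{\alpha,\gamma}(R_{\alpha\bar\alpha\gamma\bar\gamma}+R_{\alpha\bar\gamma\gamma\bar\alpha})$. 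The K\"ahler symmetry $R_{\alpha\bar\gamma\gamma\bar\alpha}=R_{\alpha\bar\alpha\gamma\bar\gamma}$ collapses both sums to $\sum_{\alpha,\gamma}R_{\alpha\bar\alpha\gamma\bar\gamma}=S(p)$, so the average of the holomorphic sectional curvature equals $\frac{2}{n(n+1)}S(p)$.

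Subtracting the two contributions, the coefficient becomes $\frac{1}{n}-\frac{2}{n(n+1)}=\frac{n-1}{n(n+1)}$, which is exactly \eqref{eq:scalar-ricciperp}. The only genuinely non-bookkeeping ingredient is the evaluation of the degree-four monomial average, i.e. the content of Berger's lemma; I expect this to be the main point, with everything else reducing to linear algebra and the standard K\"ahler curvature symmetries.
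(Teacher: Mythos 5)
Your proof is correct and is essentially the paper's own route: the paper recalls this lemma from \cite{NZ} as ``a consequence of a formula of Berger,'' and your second- and fourth-moment identities on $\mathbb{S}^{2n-1}$ are precisely the content of that formula --- indeed the same averaging appears in the paper's computation (\ref{eq:24}), which for $p=n$ gives $S^\perp_n=\frac{n-1}{n+1}S$, equivalent to (\ref{eq:scalar-ricciperp}). All your moment normalizations and the use of the K\"ahler symmetry $R_{\alpha\bar\gamma\gamma\bar\alpha}=R_{\alpha\bar\alpha\gamma\bar\gamma}$ check out.
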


Note that the first Chern form $c_1(N)=\frac{\sqrt{-1}}{2\pi} r_{i\bar{j}} dz^i\wedge d\overline{z}^j$, with $r_{i\bar{j}}=\Ric(\frac{\partial}{\partial z^i}, \frac{\partial}{\partial \overline{z}^j})$.  Let $\omega_h=\frac{\sqrt{-1}}{2\pi} h_{i\bar{j}}$ be the K\"ahler form (the normalization is to make the K\"ahler and Riemannian settings coincide).  A direct computation via a unitary frame gives
\begin{equation}\label{eq:42}
c_1(N)(y)\wedge \omega_h^{n-1}(y)=\frac{1}{n}S(y)\, \omega_h^n(y).
\end{equation}
We also let $V(N)=\int_N \omega_h^n$. The normalization above makes sure that the volume of an algebraic subvariety has its volume being an integer.

Recall that for any line bundle $L$ its degree $d(L)$ is defined as
\begin{equation}\label{eq:43}
d(L)=\int_N c_1(L)\wedge \omega_h^{n-1}.
\end{equation}
When $h^{1,1}(N)=1$, it implies that $[c_1(N)]=\ell [\omega_h]$ for some constant  $\ell$. Hence we have that
$d(K_N^{-1})= \ell V(N)$.

Under the assumption (i) of Theorem \ref{thm:c1}, we know that $S(y)>0$ somewhere and $S(y)\ge 0, \forall x\in N$ by Lemma \ref{lm:Gold}, which then implies that  $d(K^{-1}_N)>0$, hence $\ell >0$. This shows that $[c_1(N)]>0$. Now Yau's solution to the Calabi's conjecture \cite{Yau,Tian}  implies that $N$ admits a K\"ahler metric such that its Ricci curvature is  $\ell \omega_h>0$.

The proof for statement (ii) is similar. The existence of negative K\"ahler-Einstein metric follows from the Aubin-Yau theorem \cite{Yau, Tian}.

To prove Corollary \ref{coro:11} we observe that if $\ell=0$ in the above argument, it implies that
$S(y)\equiv 0$. Hence by Lemma \ref{lm:Gold} we have that $\Ric^\perp\equiv 0$. By Theorem 6.1 of \cite{NWZ} it implies that $N$ is flat for $n\ge 3$, or $n=2$ and $N$ is either flat or locally a product. This contradicts to the assumption of local irreducibility.

Note that the same argument can be applied to conclude the same result $\Ric_k$ and $\Ric_k^{\perp}$.

\begin{proposition}\label{prop:simple}Let $(N, h)$ be a compact K\"ahler manifold of complex dimension $n$.
 Assume further that $h^{1, 1}(N)=1$. Then  (i) if $\Ric_k$ (or $\Ric^\perp_k$) is quasi-positive for some $1\le k\le n$ , then $N$ must be Fano;  (ii) if $\Ric_k$  (or $\Ric^\perp_k$) is quasi-negative, $N$ must be projective with ample canonical line bundle $K_N$. In particular in the case (i) $N$ admits a K\"ahler metric with positive Ricci, and in the case of (ii) $N$ admits a K\"ahler-Einstein metric with negative Einstein constant.
\end{proposition}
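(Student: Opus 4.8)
The plan is to reduce both statements to the sign of the total scalar curvature $\int_N S\,\omega_h^n$ and then run verbatim the argument already given for Theorem \ref{thm:c1}. Indeed, once one knows that $S\ge 0$ everywhere with $S>0$ on a nonempty open set (resp. $S\le 0$ everywhere with $S<0$ somewhere), the identity (\ref{eq:42}) gives
\begin{equation*}
d(K_N^{-1})=\int_N c_1(N)\wedge \omega_h^{n-1}=\frac{1}{n}\int_N S(y)\,\omega_h^n(y)>0 \quad (\text{resp. }<0),
\end{equation*}
and the hypothesis $h^{1,1}(N)=1$ forces $[c_1(N)]=\ell[\omega_h]$ with $\ell=d(K_N^{-1})/V(N)$, so that $\ell>0$ (resp. $\ell<0$). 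Then $[c_1(N)]>0$ (resp. $K_N$ is ample), and Yau's solution of the Calabi conjecture (resp. the Aubin--Yau theorem) supplies the asserted metric. Thus the only new input I need is the implication from the quasi-positivity of $\Ric_k$ or $\Ric^\perp_k$ to the quasi-positivity of $S=S_n$.

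For $\Ric_k$ this is immediate. Fixing a $k$-subspace $\Sigma\subset T_x'N$ with unitary frame $\{E_i\}_{i=1}^k$, the trace of the restricted Ricci form is exactly the restricted scalar curvature,
\begin{equation*}
\sum_{i=1}^k \Ric_k(x,\Sigma)(E_i,\overline{E}_i)=\sum_{i,j=1}^k R_{i\bar i j\bar j}=S_k(x,\Sigma).
\end{equation*}
Hence $\Ric_k(x)\ge 0$ implies $S_k(x)\ge 0$, with strict inequality at any point where $\Ric_k$ is positive. Combined with the monotonicity $S_k\ge 0\Rightarrow S_p\ge 0$ for all $p\ge k$ (established in the proof of Proposition \ref{prop:6-add-1}), taking $p=n$ yields $S=S_n\ge 0$ everywhere and $>0$ on a neighborhood of the point of strict positivity.

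For $\Ric^\perp_k$ (necessarily $k\ge 2$, since $\Ric_1^\perp\equiv 0$) I would pass through the same averaging idea. Writing $\Ric^\perp_k(x,\Sigma)(v,\bar v)=\Ric_k(x,\Sigma)(v,\bar v)-H(v)/|v|^2$ and averaging over the unit sphere of $\Sigma$, using Berger's formula $\aint_{v\in\Sigma}H(v)\,d\theta(v)=\frac{2}{k(k+1)}S_k(x,\Sigma)$ together with $\aint_{v\in\Sigma}\Ric_k(x,\Sigma)(v,\bar v)\,d\theta(v)=\frac{1}{k}S_k(x,\Sigma)$, I obtain
\begin{equation*}
\aint_{v\in\Sigma,\,|v|=1}\Ric^\perp_k(x,\Sigma)(v,\bar v)\,d\theta(v)=\frac{k-1}{k(k+1)}\,S_k(x,\Sigma).
\end{equation*}
Since the coefficient is strictly positive for $k\ge 2$, $\Ric^\perp_k\ge 0$ again forces $S_k\ge 0$ (strict where $\Ric^\perp_k>0$), and the monotonicity again pushes this up to $S=S_n$. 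The quasi-negative cases of both curvatures are identical with all inequalities reversed.

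I expect the scalar-curvature reduction itself to be routine, and the single point deserving care to be the passage $S_k\ge 0\Rightarrow S_n\ge 0$: it rests on a Grassmannian averaging (the average of $S_k(x,\Sigma)$ over the $k$-planes contained in a fixed $(k+1)$-plane is a positive multiple of $S_{k+1}(x,\Sigma')$), which preserves both weak and strict inequalities and hence transports quasi-positivity cleanly across the scales $k,k+1,\dots,n$. Everything downstream is exactly the Calabi--Yau/Aubin--Yau argument of Theorem \ref{thm:c1}; for the local-irreducibility refinement one would, as in Corollary \ref{coro:11}, rule out $\ell=0$ via the flatness dichotomy of Theorem 6.1 of \cite{NWZ}.
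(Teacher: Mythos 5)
Your proposal is correct and takes essentially the same route as the paper, which disposes of this proposition with the single remark that ``the same argument'' as Theorem \ref{thm:c1} applies: reduce the hypothesis to quasi-positivity (resp.\ quasi-negativity) of the scalar curvature, then use $h^{1,1}=1$, the degree identity (\ref{eq:42})--(\ref{eq:43}), and Yau/Aubin--Yau. The averaging inputs you supply to make that reduction explicit --- the trace identity $\sum_i \Ric_k(x,\Sigma)(E_i,\overline{E}_i)=S_k(x,\Sigma)$, the identity $\aint_{v\in\Sigma}\Ric^\perp_k(x,\Sigma)(v,\bar v)\,d\theta(v)=\frac{k-1}{k(k+1)}S_k(x,\Sigma)$ (which matches the paper's computation in the proof of Corollary \ref{coro:6-add-3} after an index shift), and the Grassmannian monotonicity $S_k>0\Rightarrow S_p>0$ for $p\ge k$ asserted in Proposition \ref{prop:6-add-1} --- are precisely what the paper leaves implicit, so your writeup is a faithful filling-in of its proof rather than a different argument.
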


 Before introducing two new curvatures we first observe that in (\ref{eq:12}) if we replace $A$ by its traceless part $\AA=A-\lambda \id$ with $\lambda=\frac{\operatorname{trace}(A)}{n}$, it remains the same. Namely  QB$(A)$=QB$(\AA)$. Hence QB is defined on the quotient space $S^2(\mathbb{C}^n)/\{\mathbb{C} \id\}$, with $S^2(\mathbb{C}^n)$ being the space of Hermitian symmetric transformations of $\mathbb{C}^n$.  Now QB$>0$ means that QB$(A)>0$ for all $A\ne 0$ as an equivalence class.  This suggests a refined positivity QB$_k>0$, for any $1\le k\le n$,  defined as QB$(A)>0$ for any  $A\notin \{\mathbb{C}\id\}$ of rank not greater than $k$. Clearly for $k<n$, a nonzero Hermitian symmetric matrix with rank no greater than $k$ can not be in  $\{\mathbb{C} \id\}$. It is easy to see QB$_1>0$ is equivalent to $\Ric^\perp>0$ and QB$_n>0$  is equivalent to QB$>0$. Naturally a possible approach towards the classification of $\Ric^\perp>0$ is through the family of K\"ahler manifolds with QB$>0$ and  QB$_k>0$.

Now we  discuss the first of two new curvatures.  Recall that  {\it cross quadratic bisectional curvature}  CQB,  is defined as a Hermitian quadratic form on linear maps $A: T''N \to T'N$:
\begin{equation}\label{eq:34}
CQB_R(A)=\sum_{\alpha, \beta=1}^nR(A(\overline{E}_\alpha), \overline{A(\overline{E}_\alpha)}, E_\beta, \overline{E}_\beta)-R(E_\alpha, \overline{E}_\beta, A(\overline{E}_\alpha), \overline{A(\overline{E}_\beta)})
\end{equation}
for any unitary frame $\{E_\alpha\}$ of $T'N$. This is similar to (\ref{eq:12}). But here we allow $A$ to be any linear maps.  We say $R$ has CQB$>0$ if CQB$(A)>0$ for any $A\ne 0$.
For any $X\ne 0$, if we choose $\{E_\alpha\}$ with $E_1=\frac{X}{|X|}$, and let $A$ be the linear map satisfying $A(\overline{E}_1)=E_1$ and $A(\overline{E}_\alpha)=0$ for any $\alpha \ge 2$, it is easy to see that
CQB$_R(A)=\Ric^\perp(X, \overline{X})/|X|^2$. Hence CQB$>0$ implies that $\Ric^\perp>0$\footnote{In \cite{NZ-cross}, it was shown  that CQB$>0$ implies $\Ric>0$.}.  Theorem \ref{thm:NWZ1} below shows that CQB$>0$ holds for all classical K\"ahler C-spaces with $b_2=1$, unlike QB, which fails to be  positive on about $20\%$ of K\"ahler $C$-spaces with $b_2=1$. The expression CQB  is motivated by the work of Calabi-Vesentini \cite{CV} where the authors studied the deformation rigidity of compact quotients of Hermitian symmetric spaces of noncompact type. We can introduce the concept CQB$_k>0$  (or CQB$_k<0$), defined as CQB$(A)>0$ for any $A$ with rank not greater than $k$. Express $A$ as $\sum_{s=1}^k X_s\otimes Y_s$, then CQB$_k>0$ is equivalent to
\begin{equation}\label{eq:cqb_k}
\sum_{s, t=1}^k \Ric( X_s, \overline{X}_t) \langle Y_s, \overline{Y}_t\rangle -R(X_s, \overline{X}_t, Y_s, \overline{Y}_t)>0,\quad  \forall \sum_{s=1}^k X_s\otimes Y_s \ne 0.
\end{equation}

\begin{proposition} (i)The condition CQB$_1>0$ implies $\Ric^\perp>0$,  in particular $N$ satisfies $h^{p, 0}=0$, $\pi_1(N)=\{0\}$,  and $N$ is projective. (ii) If $N$ is compact with $n\ge 2$, and CQB$_2>0$, then Ricci curvature is $2$-positive.
\end{proposition}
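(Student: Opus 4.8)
The plan is to handle the two parts separately: part (i) is a direct specialization of the defining inequality, while part (ii) requires choosing the right family of rank-two maps and averaging them.

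For part (i), I would start from the rank-one instance of (\ref{eq:cqb_k}). Writing a rank-one map as $A=X\otimes Y$, the hypothesis CQB$_1>0$ reads $\Ric(X,\overline{X})\langle Y,\overline{Y}\rangle-R(X,\overline{X},Y,\overline{Y})>0$ for all $X\otimes Y\ne0$. Specializing to $Y=X$ collapses this to $\Ric(X,\overline{X})|X|^2-H(X)=|X|^2\,\Ric^\perp(X,\overline{X})>0$, so $\Ric^\perp>0$. The remaining assertions — $h^{p,0}=0$ for all $p$, $\pi_1(N)=\{0\}$, and projectivity — then follow immediately from Theorem \ref{thm:1}, which already establishes all of these under the single hypothesis $\Ric^\perp>0$.

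For part (ii), the target is the $2$-positivity of the Ricci form, i.e. $\Ric(E_1,\overline{E}_1)+\Ric(E_2,\overline{E}_2)>0$ for every orthonormal pair $E_1,E_2\in T'_xN$. The idea is to feed CQB$_2$ a one-parameter family of rank-two maps and integrate. Concretely, I would fix $X_1=E_1$, $X_2=E_2$ and take $A=E_1\otimes Y_1+E_2\otimes Y_2$, where $Y_1,Y_2$ are independent unit vectors ranging over $\mathbb{S}^{2n-1}\subset T'_xN$. Each such $A$ has rank at most two, so (\ref{eq:cqb_k}) gives
$$0<\Ric(E_1,\overline{E}_1)|Y_1|^2+\Ric(E_2,\overline{E}_2)|Y_2|^2+\Ric(E_1,\overline{E}_2)\langle Y_1,\overline{Y}_2\rangle+\Ric(E_2,\overline{E}_1)\langle Y_2,\overline{Y}_1\rangle-\sum_{s,t=1}^2 R(E_s,\overline{E}_t,Y_s,\overline{Y}_t).$$
I would then integrate this strict inequality over $(Y_1,Y_2)$ against the product of the uniform probability measures on the two spheres. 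Using the elementary identity $\aint_{|Y|=1}Y^i\overline{Y}^j\,d\theta(Y)=\tfrac1n\delta_{ij}$ (hence $\aint Y=0$) together with the independence of $Y_1$ and $Y_2$: the diagonal Ricci terms keep coefficient $1$, the two cross Ricci terms average to zero, the two off-diagonal curvature terms $R(E_1,\overline{E}_2,\cdot,\cdot)$ and $R(E_2,\overline{E}_1,\cdot,\cdot)$ average to zero, and each diagonal curvature term averages to $\tfrac1n\Ric(E_s,\overline{E}_s)$. Collecting the survivors yields
$$0<\frac{n-1}{n}\big(\Ric(E_1,\overline{E}_1)+\Ric(E_2,\overline{E}_2)\big),$$
and since $n\ge2$ this is exactly the asserted $2$-positivity.

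The only creative point — and the main obstacle — is recognizing that averaging the free vectors $Y_1,Y_2$ \emph{independently} over the \emph{full} sphere is precisely what simultaneously annihilates the off-diagonal curvature contributions and converts the diagonal ones into $\tfrac1n$ times the Ricci form. A naive diagonal choice such as $Y_s=E_s$ leaves uncontrolled holomorphic-sectional and mixed curvature terms and merely reproduces the summed inequality $\Ric^\perp>0$; thus both the independence of the two test vectors and the averaging over all $n$ directions are essential. Once this test family is identified, the remaining steps are the routine moment computations indicated above.
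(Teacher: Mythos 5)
Your argument is correct in both parts. Part (i) coincides with the paper's proof: your rank-one specialization $A=X\otimes X$ is exactly the paper's test map $A(\overline{E}_1)=E_1$, $A(\overline{E}_\alpha)=0$ for $\alpha\ge 2$ (with $E_1=X/|X|$), giving $\mathrm{CQB}(A)=\Ric^\perp(X,\overline{X})/|X|^2$, and both proofs then quote Theorem \ref{thm:1} for the vanishing of $h^{p,0}$, the simply-connectedness and the projectivity. In part (ii), however, you take a genuinely different route. The paper evaluates CQB on the single skew-symmetric map $A(\overline{E}_1)=E_2$, $A(\overline{E}_2)=-E_1$, i.e. $A=E_2\otimes E_1-E_1\otimes E_2$ in the notation of (\ref{eq:cqb_k}); for this choice the quadratic curvature sum in (\ref{eq:cqb_k}) cancels identically, since the K\"ahler symmetry $R(E_1,\overline{E}_2,E_2,\overline{E}_1)=R(E_1,\overline{E}_1,E_2,\overline{E}_2)$ pairs the cross terms with opposite signs against the diagonal ones, so that $\mathrm{CQB}(A)$ equals $\Ric(E_1,\overline{E}_1)+\Ric(E_2,\overline{E}_2)$ on the nose. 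You instead average the rank-$\le 2$ family $A=E_1\otimes Y_1+E_2\otimes Y_2$ over independent $Y_1,Y_2$ on the unit sphere; your moment computations check out (second moments $\tfrac1n\delta_{ij}$, vanishing first moments by the symmetry $Y\mapsto -Y$, $A\ne 0$ always since $E_1,E_2$ are independent, and strictness survives because one integrates a positive continuous function over a compact set), yielding $\tfrac{n-1}{n}\left(\Ric(E_1,\overline{E}_1)+\Ric(E_2,\overline{E}_2)\right)>0$, which suffices precisely because $n\ge2$. What each approach buys: the paper's choice is pointwise, integration-free and sharp (constant $1$ rather than $\tfrac{n-1}{n}$), and the identical skew trick transfers verbatim to prove the $2$-positivity of Ricci under $^d$CQB$_2>0$ in Theorem \ref{thm:CV2}(ii); your averaging requires no inspired cancellation---any completion of the test data works once the two free vectors are decoupled---at the cost of the dimensional factor and the sphere-moment identities. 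Your closing diagnosis is also accurate: the diagonal choice $Y_s=E_s$ leaves the uncontrolled remainder $-2\operatorname{Re} R(E_1,\overline{E}_2,E_1,\overline{E}_2)$, so either the skew twist of the paper or your independent averaging is genuinely needed to kill the off-diagonal curvature contributions.
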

\begin{proof} Part (i) is proved in the paragraph above together with  Theorem \ref{thm:1}. For part (ii), for any unitary frame $\{E_\alpha\}$,  let $A$ be the map defined as $A(\overline{E}_1)=E_2$ and $A(\overline{E}_2)=-E_1$, and $A(E_\alpha)=0$ for all $\alpha>2$. Then the direct checking  shows that CQB$>0$ is equivalent to
$$
\Ric(E_1, \overline{E}_1)+\Ric(E_2, \overline{E}_2)>0.
$$
Since this holds for any unitary frame we have the $2$-positivity of the Ricci curvature.
\end{proof}

The work of Calabi-Vesentini \cite{CV} proves   the following result.

\begin{theorem}\label{thm:CV1} Let $(N, h)$ be a compact K\"ahler manifold with quasi-negative CQB (namely CQB$\le0$ and $<0$ at least at one point). Then $$H^1(N, T'N)=\{0\}.$$ In particular, $N$ is deformation rigid in the sense that it does not admit nontrivial infinitesimal holomorphic deformation.
\end{theorem}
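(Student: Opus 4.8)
The plan is to represent $H^1(N,T'N)$ by harmonic forms and to run a Bochner--Kodaira argument in which the curvature term is precisely the quadratic form $\mathrm{CQB}$. Since $N$ is compact K\"ahler, Dolbeault theory together with Hodge theory identifies $H^1(N,T'N)$ with the space $\mathcal H^{0,1}(N,T'N)$ of $\bar\partial$-harmonic $T'N$-valued $(0,1)$-forms, and by the Kodaira--Spencer correspondence each such harmonic form is exactly an infinitesimal holomorphic deformation of $N$; thus the two assertions of the theorem are equivalent, and it suffices to prove $\mathcal H^{0,1}(N,T'N)=\{0\}$. I would write a harmonic form in a local unitary frame as $\varphi=\sum_{i,\beta}\varphi^{i}_{\bar\beta}\,d\bar z^{\beta}\otimes E_i$ and view it pointwise as the linear map $A_\varphi\colon T''N\to T'N$ given by $A_\varphi(\overline E_\beta)=\sum_i \varphi^i_{\bar\beta}E_i$, which is precisely the class of objects on which $\mathrm{CQB}$ is defined in (\ref{eq:34}).

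Next I would apply the Bochner--Kodaira--Weitzenb\"ock identity for the $\bar\partial$-Laplacian $\Delta_{\bar\partial}=\bar\partial\bar\partial^{*}+\bar\partial^{*}\bar\partial$ acting on $T'N$-valued $(0,1)$-forms, where $T'N$ carries the Chern connection of $h$ whose curvature is the full K\"ahler curvature tensor $R$. Pairing the Weitzenb\"ock formula with $\varphi$, integrating over $N$, and using harmonicity $\bar\partial\varphi=\bar\partial^{*}\varphi=0$ to kill the left-hand side, I expect to obtain an integral identity of the form
\begin{equation*}
0=\int_N |D\varphi|^{2}\,dV_h+\int_N\bigl(-\mathrm{CQB}_R(A_\varphi)\bigr)\,dV_h,
\end{equation*}
where $D\varphi$ is a first-order covariant-derivative term. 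The decisive point is that the curvature contraction produced by the Weitzenb\"ock formula is, term by term, exactly the expression defining $\mathrm{CQB}_R(A_\varphi)$ in (\ref{eq:34}); establishing this matching with the correct sign, so that the curvature term appears as $-\mathrm{CQB}$, is the analytic heart of the argument. I would carry it out by comparing $\Delta_{\bar\partial}$ with the Bochner Laplacian, using the K\"ahler commutation identities to convert $\bar\partial$-derivatives into $\partial$-derivatives and absorbing all first-order cross terms through integration by parts, where both harmonicity conditions are needed.

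With the identity in hand, the hypothesis $\mathrm{CQB}\le 0$ makes $-\mathrm{CQB}_R(A_\varphi)$ pointwise nonnegative, so the right-hand side is a sum of two nonnegative integrals; hence both vanish. Thus $\mathrm{CQB}_R(A_\varphi)\equiv 0$ on $N$, while the vanishing of $\int_N|D\varphi|^{2}$, together with the harmonicity relations, forces $\varphi$ to be parallel and therefore $|\varphi|$ to be constant. At the point $p_0$ where $\mathrm{CQB}$ is strictly negative as a quadratic form, $\mathrm{CQB}_R(A_\varphi)(p_0)=0$ can hold only if $A_\varphi(p_0)=0$, i.e. $\varphi(p_0)=0$; constancy of $|\varphi|$ then gives $\varphi\equiv 0$. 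Therefore $\mathcal H^{0,1}(N,T'N)=\{0\}$ and $H^1(N,T'N)=\{0\}$, which is the claimed deformation rigidity.

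The main obstacle I anticipate is the second step: pinning down the precise Bochner--Kodaira identity and verifying that its curvature term is exactly $-\mathrm{CQB}_R(A_\varphi)$ rather than some other, more symmetric, contraction. This is where the asymmetric two-term structure of (\ref{eq:34})---and hence the relevance of $\mathrm{CQB}$ as opposed to an ordinary bisectional expression---must be identified, and where careful bookkeeping of conjugations and of which covariant derivative survives the integration by parts is essential, so that the negativity hypothesis is used in the correct direction. A secondary technical point is justifying that the surviving first-order term, combined with harmonicity, indeed forces $\varphi$ to be parallel, which is what propagates the pointwise vanishing at $p_0$ to all of $N$.
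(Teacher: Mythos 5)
Your overall strategy coincides with the paper's: represent a class in $H^1(N,T'N)$ by a $\bar\partial$-harmonic $T'N$-valued $(0,1)$-form $\varphi$, regard it pointwise as a map $A_\varphi\colon T''N\to T'N$, and apply the Akizuki--Nakano (Bochner--Kodaira) identity. Your anticipated ``analytic heart'' does work out: pairing with $\bar\varphi$ and integrating, the paper obtains exactly
\begin{equation*}
0=\int_N |\partial\varphi|^2+\int_N|\partial^*\varphi|^2-\int_N \mathrm{CQB}_R(A_\varphi),
\end{equation*}
so the curvature contraction is precisely the quadratic form (\ref{eq:34}) with the sign you predicted, and under $\mathrm{CQB}\le 0$ all three terms must vanish, giving $\mathrm{CQB}_R(A_\varphi)\equiv 0$. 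Up to this point your argument matches the paper.

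The endgame, however, contains a genuine gap. The surviving first-order term is $|\partial\varphi|^2+|\partial^*\varphi|^2$, not the full covariant derivative $|\nabla\varphi|^2$; its vanishing only says that $\varphi$ is harmonic for $\Delta_\partial$ as well as for $\Delta_{\bar\partial}$, and this does \emph{not} force $\varphi$ to be parallel or $|\varphi|$ to be constant. Already in the scalar-valued case this implication fails: a holomorphic $1$-form on a compact Riemann surface of genus at least $2$ is harmonic for both Laplacians, yet it has zeros, so it is neither parallel nor of constant norm. Consequently your propagation step ``$A_\varphi(p_0)=0$ plus constancy of $|\varphi|$ implies $\varphi\equiv 0$'' is not justified. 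The correct finish --- and the one the paper uses --- is: by continuity the quadratic form $\mathrm{CQB}$ is negative definite on an open neighborhood $U$ of $p_0$, so $\mathrm{CQB}_R(A_\varphi)\equiv 0$ forces $A_\varphi\equiv 0$, i.e.\ $\varphi\equiv 0$, on all of $U$; then the unique continuation property (Aronszajn) for the second-order elliptic system $\Delta_{\bar\partial}\varphi=0$ yields $\varphi\equiv 0$ on $N$. With this substitution for your final paragraph, the proof is complete and agrees with the paper's.
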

\begin{proof}
Let $\phi=\sum_{i, \alpha =1}^n\phi^i_{\bar{\alpha}}dz^{\bar{\alpha}}\otimes E_i$ be a $(0,1)$-form taking value in $T'N$ with $\{E_i\}$ being a local holomorphic basis of $T'N$. The Arizuki-Nakano  formula gives
\begin{equation}\label{eq:siu1}
(\Delta_{\bar{\partial}} \phi-\Delta_{\partial} \phi)^i_{\bar{\beta}}=R_{j\, \, \, \, \bar{\beta}}^{\,\, i  \bar{\tau}}\phi^j_{\bar{\tau}}-
\Ric^i_j\phi^j_{\bar{\tau}}.
\end{equation}
Under a normal coordinate we have that
$$
\langle \Delta_{\bar{\partial}} \phi-\Delta_{\partial} \phi, \overline{\phi}\rangle =-\left( \Ric_{j\bar{i}} \phi^j_{\bar{\beta}}\overline{\phi^i_{\bar{\beta}}}-R_{j\bar{i}\tau \bar{\beta}}\phi^j_{\bar{\tau}}\overline{\phi^i_{\bar{\beta}}}\right).
$$
Hence if $\Delta_{\bar{\partial}}\phi=0$, we then have
$$
0=\int_N |\partial \phi|^2+\int_N |\partial^* \phi|^2-\int_N \left( \Ric_{i \bar{j}} \phi^i_{\bar{\beta}}\overline{\phi^j_{\bar{\beta}}}-R_{j\bar{i}\tau \bar{\beta}}\phi^j_{\bar{\tau}}\overline{\phi^i_{\bar{\beta}}}\right).
$$
Letting $A(\overline{E}_\beta)=\phi^i_{\bar{\beta}} E_i$, the assumption amounts to that the expression in the third   integral above is non-positive and  negative over a open subset $U$ where CQB$<0$ if $(\phi^i_{\bar{\tau}})\ne 0$ on $U$.  This forces
$(\phi^i_{\bar{\beta}})\equiv 0$ on $U$, hence $\phi=0$  by the unique continuation since $\phi$ is harmonic.
\end{proof}
It has been proved in \cite{NZ} that if $\Ric^\perp<0$, then $H^0(N, T'N)=\{0\}$.
 By Table 1 of \cite{CV} and the proof of Theorem \ref{thm:NWZ1} below, all locally Hermitian symmetric spaces of noncompact type satisfy  CQB$<0$.  Moreover the above theorem generalizes the result of Calabi-Vesentini since there are examples of non Hermitian symmetric manifolds with CQB$<0$. Flipping the sign we  have the following corollary.

\begin{corollary}Let $(N, h)$ be a compact K\"ahler manifold with quasi-positive CQB. Then $$\mathcal{H}^{0, 1}_{\partial}(N, T'N)=\mathcal{H}^{1, 0}_{\bar{\partial}} (N, \Omega)= H^0(N, \Omega^1(\Omega))=\{0\},$$
where $\Omega =(T'N)^*$. If only $\Ric^\perp>0$ is assumed, then $H^0(N, \Omega)=\{0\}$.
\end{corollary}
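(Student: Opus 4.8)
The plan is to deduce all three vanishing statements from Theorem \ref{thm:CV1} by running the \emph{same} Bochner argument ``with the sign flipped'', combined with the standard Hodge-theoretic identifications among the groups. First I would record those identifications. For the holomorphic bundle $\Omega=(T'N)^*$, a $\bar\partial$-harmonic $(1,0)$-form with values in $\Omega$ is automatically $\bar\partial^*$-closed (its image under $\bar\partial^*$ sits in bidegree $(1,-1)=0$), so it is simply a holomorphic section of $\Omega^1\otimes\Omega$; thus $\mathcal{H}^{1,0}_{\bar\partial}(N,\Omega)=H^0(N,\Omega^1(\Omega))$. Complex conjugation sends a $(0,1)$-form valued in $T'N$ to a $(1,0)$-form valued in $\overline{T'N}$ and interchanges $\partial$ with $\bar\partial$, hence $\Delta_\partial$ with $\Delta_{\bar\partial}$; composing with the metric isomorphism $\overline{T'N}\cong\Omega$ of Hermitian holomorphic bundles (an isometry intertwining the Chern connections) yields $\mathcal{H}^{0,1}_{\partial}(N,T'N)\cong\mathcal{H}^{1,0}_{\bar\partial}(N,\Omega)$. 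It therefore suffices to prove $\mathcal{H}^{0,1}_{\partial}(N,T'N)=\{0\}$.

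For the analytic core I would repeat the computation in the proof of Theorem \ref{thm:CV1} verbatim, but imposing $\Delta_{\partial}\phi=0$ in place of $\Delta_{\bar\partial}\phi=0$. Writing $\phi=\sum\phi^i_{\bar\beta}\,d\bar z^\beta\otimes E_i$ and $A(\overline{E}_\beta)=\phi^i_{\bar\beta}E_i$, contracting the Akizuki--Nakano identity (\ref{eq:siu1}) against $\overline\phi$ and integrating gives
$$
\int_N\bigl(|\bar\partial\phi|^2+|\bar\partial^*\phi|^2\bigr)-\int_N\bigl(|\partial\phi|^2+|\partial^*\phi|^2\bigr)=-\int_N CQB_R(A).
$$
Imposing $\Delta_{\partial}\phi=0$ kills the second parenthesis, leaving $\int_N CQB_R(A)=-\int_N(|\bar\partial\phi|^2+|\bar\partial^*\phi|^2)\le 0$. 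Under quasi-positive CQB the integrand $CQB_R(A)$ is $\ge 0$ pointwise, so the integral is simultaneously $\ge 0$; hence it vanishes and $A\equiv 0$, i.e. $\phi\equiv 0$, on the open set $U$ where CQB$>0$. Since $\Delta_{\partial}\phi=0$, formula (\ref{eq:siu1}) shows that $\Delta_{\bar\partial}\phi$ equals a zeroth-order curvature term applied to $\phi$, so $\phi$ solves a second-order elliptic equation and Aronszajn's unique continuation forces $\phi\equiv 0$ on all of $N$. This gives $\mathcal{H}^{0,1}_{\partial}(N,T'N)=\{0\}$, and the identifications above transport the vanishing to $\mathcal{H}^{1,0}_{\bar\partial}(N,\Omega)$ and $H^0(N,\Omega^1(\Omega))$.

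The last assertion is immediate from Theorem \ref{thm:1}: since $H^0(N,\Omega)=H^0(N,\Omega^1)$ is exactly the space of holomorphic $1$-forms, of dimension $h^{1,0}$, the conclusion $h^{p,0}=0$ for all $1\le p\le n$ under $\Ric^\perp>0$ gives $H^0(N,\Omega)=\{0\}$.

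The step I expect to be the main obstacle is the bookkeeping around the first isomorphism: checking that conjugation together with the metric identification $\overline{T'N}\cong\Omega$ genuinely matches the $\partial$-harmonic theory of $T'N$-valued $(0,1)$-forms with the $\bar\partial$-harmonic theory of $\Omega$-valued $(1,0)$-forms, i.e. that it intertwines the relevant Laplacians and not merely the underlying smooth bundles. The Bochner computation itself is a transparent sign-flip of Theorem \ref{thm:CV1}, so the only genuinely new analytic input is verifying that quasi-positivity produces the correct inequality and that unique continuation applies to a $\partial$-harmonic (rather than $\bar\partial$-harmonic) representative.
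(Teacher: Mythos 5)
Your proposal is correct and is essentially the paper's intended argument: the paper proves the corollary by ``flipping the sign'' in the Bochner computation of Theorem \ref{thm:CV1} (imposing $\Delta_{\partial}\phi=0$ so that quasi-positive CQB forces $\int_N CQB_R(A)=0$, then unique continuation), and your identification $\mathcal{H}^{0,1}_{\partial}(N,T'N)\cong\mathcal{H}^{1,0}_{\bar\partial}(N,\Omega)=H^0(N,\Omega^1(\Omega))$ via conjugation plus the metric isomorphism is exactly the operator $\#$ the paper itself introduces in the proof of Theorem \ref{thm:CV2}. The final claim under $\Ric^\perp>0$ is likewise handled as you do, by the vanishing $h^{1,0}=0$ from Theorem \ref{thm:1}.
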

In fact we can strengthen the argument to prove the following result.

\begin{theorem}\label{thm:cqb-rc} Assume that $(N, h)$ is a compact K\"ahler manifold with CQB$>0$. Then for any ample line bundle $L$, there exist $C(L)$ such that
\begin{equation}\label{eq-thm:cqb-rc}
H^0(N, ((T'N)^*)^{\otimes p} \otimes L^{\otimes \ell})=\{0\}
\end{equation}
for any $p\ge C(L)\ell$, with $\ell$ being any positive integer. In particular $N$ is rationally connected.
\end{theorem}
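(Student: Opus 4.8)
The plan is to verify the Campana--Demailly--Peternell criterion of Theorem \ref{prop:61} directly, by a pointwise $\partial\bar\partial$-Bochner argument in the exact spirit of Theorem \ref{thm:61}. First I would fix the ample $L$ with a Hermitian metric $a$ of curvature $C_a$, set $A=\sup_{|v|=1}|C_a(v,\overline{v})|$, and suppose for contradiction that a nonzero holomorphic section $s\in H^0(N,((T'N)^*)^{\otimes p}\otimes L^{\otimes\ell})$ exists. Writing $s=\sum_{I_p}a_{I_p,\ell}\,dz^{i_1}\otimes\cdots\otimes dz^{i_p}\otimes e^{\ell}$ in a normal coordinate centered at a maximum point $x_0$ of $|s|^2$, the Bochner identity \eqref{eq:61} applies verbatim; the only change from Theorem \ref{thm:61} is that rather than fixing one $v$ I would average $\partial_v\bar\partial_{\bar v}|s|^2$ over a unitary frame $\{E_\beta\}$, i.e. take the full complex trace of the negative semidefinite Hessian at $x_0$.

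The key is bookkeeping of indices. For each slot $\alpha\in\{1,\dots,p\}$ and each choice $J$ of the remaining $p-1$ indices, the coefficients of $s$ assemble into a vector $\xi^{(\alpha,J)}\in T'_{x_0}N$, and the curvature term of \eqref{eq:61} collapses to $\sum_{\alpha,J}R(v,\overline{v},\xi^{(\alpha,J)},\overline{\xi^{(\alpha,J)}})$, with no cross terms between distinct $\xi$'s. Tracing over the frame turns this into $\sum_{\alpha,J}\Ric(\xi^{(\alpha,J)},\overline{\xi^{(\alpha,J)}})$, while $\sum_{\alpha,J}|\xi^{(\alpha,J)}|^2=p\,\|a\|^2$ recovers $p$ times the coefficient norm. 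Hence, provided $\Ric\ge\delta>0$, the curvature contribution is at least $\delta p\,|s|^2$, whereas the line-bundle contribution is at most $\ell\,(\operatorname{tr}C_a)\,|s|^2\le \ell n A\,|s|^2$. The maximum principle then gives $0\ge(\delta p-\ell n A)|s|^2$, a contradiction once $p\ge C(L)\ell$ with $C(L)=nA/\delta$; this is precisely \eqref{eq-thm:cqb-rc}. Since CQB$>0$ forces $\Ric^\perp>0$, Theorem \ref{thm:1} gives $h^{2,0}=0$ and hence projectivity, so Theorem \ref{prop:61} upgrades \eqref{eq-thm:cqb-rc} to rational connectedness.

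It remains to extract $\Ric\ge\delta>0$ from the hypothesis CQB$>0$, which is where the curvature assumption genuinely enters. Here I would feed rank-one maps into \eqref{eq:cqb_k}: for a fixed unit vector $\xi$ and each frame vector $E_\beta$, the map $\xi\otimes E_\beta$ yields $\text{CQB}(\xi\otimes E_\beta)=\Ric(\xi,\overline{\xi})-R(\xi,\overline{\xi},E_\beta,\overline{E}_\beta)>0$. Summing over $\beta$ and using $\sum_\beta R(\xi,\overline{\xi},E_\beta,\overline{E}_\beta)=\Ric(\xi,\overline{\xi})$ gives $(n-1)\Ric(\xi,\overline{\xi})>0$, so $\Ric>0$ on the compact $N$ and a uniform $\delta>0$ exists. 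This keeps the proof self-contained, independent of the fact from \cite{NZ-cross} that CQB$>0$ implies $\Ric>0$.

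The main obstacle I anticipate is conceptual rather than computational: the pointwise Bochner maximum principle on $|s|^2$ only ever produces the \emph{diagonal}, bisectional part of the curvature, namely the first ($\Ric$) summand of CQB, and never the cross term $-R(E_\alpha,\overline{E}_\beta,A(\overline{E}_\alpha),\overline{A(\overline{E}_\beta)})$ that distinguishes CQB from plain Ricci positivity. So the substance of the argument is to recognize that CQB$>0$ nonetheless delivers the bound $\Ric>0$ via the rank-one test above, after which the estimate is essentially the Fano ($k=n$) case of Theorem \ref{thm:61}. A secondary point to check with care is the precise sign and contraction pattern in the Bochner formula for the tensor power $((T'N)^*)^{\otimes p}$ as opposed to a $(p,0)$-form, so that the curvature term indeed appears with the favorable sign and the factor $p$ ultimately dominates $\ell$.
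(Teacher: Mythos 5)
Your proposal is correct, but it takes a genuinely different route from the paper's own proof. The paper stays entirely inside the CQB formalism: it views a holomorphic section of $((T'N)^*)^{\otimes (p+1)}\otimes L^{\otimes \ell}$ as a $\bar{\partial}$-harmonic $(1,0)$-form $\varphi$ valued in $((T'N)^*)^{\otimes p}\otimes L^{\otimes \ell}$ and applies the Akizuki--Nakano identity, integrated over $N$; the Nakano-type curvature term there produces exactly the combination $\Omega^I_J\varphi^I_\alpha\overline{\varphi^J_\alpha}-\Omega^I_{J\,\gamma\bar{\alpha}}\varphi^I_\alpha\overline{\varphi^J_\gamma}$, i.e.\ the full CQB form \emph{including the cross term}, bounded below by the CQB lower bound $\delta$, whence $0\le \int_M\left(-p\delta+A\ell\right)|\varphi|^2$. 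You instead first reduce the hypothesis to Ricci positivity: your rank-one test $A=\xi\otimes E_\beta$ in \eqref{eq:cqb_k} correctly gives $\Ric(\xi,\overline{\xi})-R(\xi,\overline{\xi},E_\beta,\overline{E}_\beta)>0$, and summing over $\beta$ yields $(n-1)\Ric(\xi,\overline{\xi})>0$, hence $\Ric\ge\delta>0$ by compactness; you then run the pointwise maximum-principle Bochner argument of Theorem \ref{thm:61} --- in effect its $k=n$ (Fano) case, which you could simply cite, though your trace bookkeeping is also carried out correctly (no cross terms among the slot vectors $\xi^{(\alpha,J)}$, and $\sum_{\alpha,J}|\xi^{(\alpha,J)}|^2=p\,\|a\|^2$). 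Your diagnosed ``obstacle'' is exactly the right one: the pointwise $\partial\bar{\partial}$ argument on $|s|^2$ only ever sees the diagonal Ricci part of the curvature, so CQB can enter only through its trace consequence $\Ric>0$ --- an implication the paper attributes to \cite{NZ-cross} (see the footnote in Section 5 and the remark immediately after the theorem, which notes that CQB$>0\Rightarrow$ Fano gives an alternate proof) but which your two-line derivation makes self-contained. What each approach buys: yours is more elementary (no Akizuki--Nakano machinery, only the already-established formula \eqref{eq:61}), and indeed once $\Ric>0$ is known, rational connectedness also follows outright from \cite{Cam, KMM}; the paper's integral argument, by contrast, uses CQB at full strength and is the template that extends to the vanishing theorems $H^q(N,T'N)=\{0\}$ (Theorems \ref{thm:CV1}, \ref{thm:CV2}), where the cross term is indispensable and no reduction to Ricci positivity is available. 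Two small points for a final write-up: the rank-one sum gives the factor $(n-1)$, so note $n\ge 2$ (for $n=1$ the form CQB vanishes identically and the hypothesis is vacuous), and the Campana--Demailly--Peternell criterion requires projectivity, which you correctly supply via CQB$>0\Rightarrow\Ric^\perp>0$ and Theorem \ref{thm:1} (alternatively, $\Ric>0$ already makes $N$ Fano, hence projective).
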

\begin{proof} First observe that a holomorphic section of $((T'N)^*)^{\otimes (p+1)} \otimes L^{\otimes \ell}$ can be viewed as a holomorphic $(1,0)$ form valued in  $((T'N)^*)^{\otimes p} \otimes L^{\otimes \ell}$. Write it as $\varphi=\varphi^{I_p}_{\alpha} dz^\alpha \otimes dz^{i_1}\otimes dz^{i_2}\otimes \cdots \otimes dz^{i_p} \otimes e^{\ell}$. Applying the Arizuki-Nakano formula to the $\bar{\partial}$-harmonic $\varphi$ as above, using the formula for the curvature of the tensor products, and under a normal coordinate, we have that
\begin{eqnarray*}
0\le \langle \square_{\partial}\varphi, \bar{\varphi}\rangle  \le \int_M \left(\Omega ^I_J \varphi^I_\alpha \overline{\varphi^J_\alpha}- \Omega^I_{J\, \gamma \bar{\alpha}}\varphi^I_\alpha \overline{\varphi^J_\gamma} \right)+A\ell |\varphi|^2\le  \int_M \left(-p\delta |\varphi|^2+A\ell |\varphi|^2\right)
\end{eqnarray*}
where $\Omega^I_{J\, \gamma \bar{\alpha}}dz^\gamma\wedge\bar{z}^\alpha$ is the curvature of $((T'N)^*)^{\otimes p}$ and $\Omega^I_J$ is the corresponding mean curvature, $\delta>0$ is the lower bound of CQB, $A$ is an upper bound of the scalar curvature of $L$ (equipped with a Hermitian metric of positive curvature). This implies that $\varphi=0$ if $p/\ell$ is sufficiently large, hence the result.
\end{proof}

Recently it was proved that CQB$>0$ implies that $M$ is Fano, which gives an alternate proof of the above result.

The results above  naturally lead to the following questions {\it (Q1): (a)
 Does  $H^1(N, T'N)=\{0\}$ hold under the weaker assumption that $\Ric^\perp<0$? (b) Is a harmonic map $f$ of sufficiently high rank from a K\"ahler manifold $(M, g)$ into a compact manifold with negative CQB  must be holomorphic or conjugate holomorphic? (c) Is there any nonsymmetric (locally) example of manifolds with CQB $<0$? (d) Do all K\"ahler C-spaces (the canonical K\"ahler metric) with $b_2=1$ satisfy  CQB$>0$ (below we provide a partial  answer to this)?  } The ultimate goal is to prove  a classification theorem for  compact K\"ahler manifolds with CQB$>0$.

  Concerning (c), in a recent preprint \cite{NZ-cross}  a nonsymmetric example has been constructed. Moreover examples show that $b_2$ can be arbitrarily large under CQB$>0$ condition. Concerning (d), we have the following affirmative answer for all classical K\"ahler C-spaces.

\begin{theorem}\label{thm:NWZ1}Let $N^n$ be a compact Hermitian symmetric space ($n\ge 2$), or classical K\"ahler C-space with $n\geq 2$ and $b_2=1$. Then the K\"ahler-Einstein metric  (unique up to constant multiple) has CQB$>0$.
\end{theorem}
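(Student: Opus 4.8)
The plan is to exploit homogeneity to reduce the statement to a finite, Lie-algebraic positivity check at one point, and then to treat the symmetric and the genuinely non-symmetric cases separately. Since $N=G/P$ is homogeneous under a compact group $G$ and the canonical K\"ahler--Einstein metric is $G$-invariant, the curvature tensor $R$ is $G$-invariant, so it suffices to prove $CQB_R(A)>0$ for every $A\neq 0$ at a single base point $o$. I would work in the complexified picture $T'_oN\cong\mathfrak m^+=\bigoplus_{\alpha}\mathfrak g_\alpha$, the sum of the root spaces transverse to the Levi factor, with a unitary frame $\{E_\alpha\}$ of root vectors. Writing $A(\overline E_\alpha)=\sum_i a_{i\alpha}E_i$ and using $\sum_\beta R(X,\overline X,E_\beta,\overline E_\beta)=\Ric(X,\overline X)$, the defining expression becomes the Hermitian quadratic form
\begin{equation}
CQB_R(A)=\sum_{\alpha,i,j}r_{i\bar j}\,a_{i\alpha}\overline{a_{j\alpha}}-\sum_{\alpha,\beta,i,j}R_{\alpha\bar\beta i\bar j}\,a_{i\alpha}\overline{a_{j\beta}}
\end{equation}
in the $n^2$ variables $a_{i\alpha}$. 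Because torus invariance forces $R_{\alpha\bar\beta i\bar j}$ (and $r_{i\bar j}$) to vanish unless $\alpha+i=\beta+j$, this large Hermitian form block-diagonalizes according to the weight $\alpha+i$, so positivity reduces to checking positive definiteness of finitely many explicit blocks built from root data.

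For the compact Hermitian symmetric spaces I would argue by duality. There $\mathfrak m^+$ is abelian, the curvature is given purely by brackets into $\mathfrak k$, and the displayed form is exactly the quadratic form studied by Calabi--Vesentini on the noncompact duals. Since $CQB$ is linear in $R$ and the compact space has curvature $R^{\mathrm{cpt}}=-R^{\mathrm{ncpt}}$, one has $CQB_{R^{\mathrm{cpt}}}(A)=-CQB_{R^{\mathrm{ncpt}}}(A)$; the strict negativity recorded in Table~1 of \cite{CV} for all irreducible noncompact types then yields $CQB>0$ in the compact case. This disposes of the symmetric locus uniformly, consistently with the remark preceding the theorem that noncompact locally symmetric spaces satisfy $CQB<0$.

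For the remaining classical K\"ahler C-spaces with $b_2=1$ --- the non-symmetric generalized flag manifolds $G/P$ obtained by marking a single node of a classical Dynkin diagram --- I would feed Itoh's explicit curvature formula for C-spaces \cite{Itoh} into the block form above, exactly as Chau--Tam did for $QB$ \cite{ChauTam} and as in the $\Ric^\perp$ computation of \cite{NWZ}. A convenient organizing fact is that the Levi (isotropy) representation splits $T'_oN$ into at most two irreducible summands, which makes the Ricci term $r_{i\bar j}$ scalar (and strictly positive, by Fano-ness) on each summand and sharply constrains the off-diagonal curvature blocks. The positivity would then be verified type by type (types $A$, $B$, $C$, $D$, with subcases according to the marked node), by showing that in each weight block the Ricci contribution strictly dominates the bisectional contribution $\sum R_{\alpha\bar\beta i\bar j}\,a_{i\alpha}\overline{a_{j\beta}}$.

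The hard part will be precisely this last case analysis. Unlike the symmetric situation, for the C-spaces $\nabla R\neq 0$, so the curvature carries additional components and the weight blocks are genuinely larger; one cannot invoke a clean duality and must instead control the eigenvalues of each block through the numerology of the classical root systems (root lengths, the canonical element determining the complex structure, and the structure constants $N_{\alpha\beta}$). Establishing the strict inequalities uniformly --- or at least reducing them to a manageable finite check for every marked classical diagram --- is where the real work lies; the earlier results of \cite{CV}, \cite{Itoh}, \cite{ChauTam}, and \cite{NWZ} supply the curvature formulae and the $\Ric^\perp>0$ baseline that make this verification feasible.
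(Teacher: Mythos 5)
Your proposal stalls exactly where the theorem lives, because it misses the one structural lemma that makes the finite check feasible. The paper's proof does not block-diagonalize by torus weights at all: writing $A(\overline{E}_\beta)=A^i_\beta E_i$ and noting that a unitary change of frame acts by congruence $A\mapsto BAB^{tr}$, one decomposes $A=A_1+A_2$ into symmetric and skew-symmetric parts. The K\"ahler symmetries of $R_{j\bar i\tau\bar\beta}$ (symmetry in $j,\tau$ and in $i,\beta$) force the bisectional term to annihilate $A_2$ and to pair only $A_1$ with itself, so with the Einstein condition $\Ric=\lambda h$, $\lambda>0$, one gets
\begin{equation*}
CQB_R(A)=\lambda|A_1|^2+\lambda|A_2|^2-R_{j\bar i\tau\bar\beta}(A_1)^j_\tau\overline{(A_1)^i_\beta}\ \ge\ \lambda|A_1|^2-Q\bigl(A_1,\overline{A_1}\bigr)+\lambda|A_2|^2,
\end{equation*}
where $Q$ is precisely the Calabi--Vesentini/Itoh Hermitian operator on symmetric $2$-tensors, $Q(X\cdot Y,\overline{Z\cdot W})=R_{X\overline{Z}Y\overline{W}}$. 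Positivity of CQB thus reduces to the single scalar inequality $\lambda>\nu$, with $\nu$ the largest eigenvalue of $Q$ --- data that is already tabulated: Table 2 of \cite{CV} for the Hermitian symmetric spaces, and Itoh's tables (through the comparison carried out in \cite{NWZ} with input from \cite{Itoh} and \cite{ChauTam}) for the only three families needing a check, $(B_r,\alpha_i)$, $(C_r,\alpha_i)$, $(D_r,\alpha_i)$. Your weight-block decomposition, while correct as far as it goes, connects to none of these tables, and the ``type by type'' verification you defer as ``the real work'' is the entire content of the theorem; as written, the non-symmetric case is not proved but only proposed.

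The symmetric case also has a subtler gap. The duality identity $CQB_{-R}(A)=-CQB_R(A)$ is sound, since CQB is linear in $R$. But ``the strict negativity recorded in Table 1 of \cite{CV}'' is not recorded there as stated: Calabi--Vesentini tabulate eigenvalues of $Q$ acting on \emph{symmetric} tensors, not a sign for the full Hermitian form on arbitrary linear maps $A:T''N\to T'N$. Passing from their eigenvalue data to $CQB<0$ on the noncompact side requires exactly the symmetric/skew decomposition above together with the Einstein constant (this is why the paper states that noncompact locally symmetric spaces satisfy $CQB<0$ ``by Table 1 of \cite{CV} \emph{and the proof of the theorem}''). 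So even your symmetric-case shortcut silently depends on the lemma you omitted; once that lemma is in place, the duality detour is unnecessary, since Table 2 of \cite{CV} handles the compact spaces directly.
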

\begin{proof} If we write $A(\overline{E}_\beta)=A^i_\beta E_i$, it is easy to see if we change to a different unitary frame $\overline{\widetilde{E}}_\alpha=B^\beta_\alpha \overline{E}_\beta$, the effect on $A$ is $BA B^{tr}$ with $B$ being a unitary transformation. Now
$$
CQB(A)=\Ric_{i \bar{j}}A^i_{\beta}\overline{A^j_{\beta}}-R_{j\bar{i}\tau \bar{\beta}}A^j_{\tau}\overline{A^i_{\beta}}.
$$
Given that there exists a normal form under congruence for symmetric  and  skew symmetric matrices, it is  helpful to write $A$ into sum of the symmetric and skew-symmetric parts.
For the special case $\Ric=\lambda h$, namely the metric is K\"ahler-Einstein with $\lambda>0$, if we decompose $A$ into the symmetric part $A_1$ and the skew-symmetric part $A_2$, noting that $R_{j\bar{i}\tau \bar{\beta}}$ is symmetric in $j, \tau$ and $i, \beta$ we have
$$
CQB(A)=\lambda|A_1|^2+\lambda|A_2|^2 -R_{j\bar{i}\tau \bar{\beta}}(A_1)^j_{\tau}\overline{(A_1)^i_{\beta}}\ge \lambda|A_1|^2-R_{j\bar{i}\tau \bar{\beta}}(A_1)^j_{\tau}\overline{(A_1)^i_{\beta}}.
$$
Now note that  $R_{j\bar{i}\tau \bar{\beta}}(A_1)^j_{\tau}\overline{(A_1)^i_{\beta}}$ is the Hermitian symmetric action $Q$ on the symmetric tensor (matrix) $A$ considered in \cite{Itoh} and \cite{CV}. Precisely  $Q$ is defined by
 $$
 Q(X\cdot Y, \overline{Z\cdot W})=R_{X\overline{Z} Y\overline{W}}
 $$
 for $X\cdot Y=\frac{1}{2}(X\otimes Y+Y\otimes X)$, and then is extended to all symmetric tensors.
Let $\nu$ denotes the biggest eigenvalue of $Q$. As in \cite{NWZ}, to verify the result we just need to compare $\lambda$ and $\nu$. This can be done for all Hermitian symmetric spaces by Table 2 in \cite{CV}. Note that $\lambda$ here is $\frac{R}{2n}$ in Calabi-Vesentini's paper \cite{CV}.  For the classical homogeneous examples which are not Hermitian symmetric we can use the comparison done in \cite{NWZ} with the data supplied by \cite{Itoh} and \cite{ChauTam}. If we use the notation of \cite{Itoh} and \cite{ChauTam}, only the three types below need to be checked:
$$
(B_r, \alpha_i)_{r\ge 3, 1<i<r}; \quad (C_r, \alpha_i)_{r\ge 3, 1<i<r}; \quad (D_r, \alpha_i)_{r\ge 4, 1< i< r-1}.
$$
The verification in Section 2 of \cite{NWZ} applies verbatim.
\end{proof}

The above result strengthens the one in \cite{NWZ}  since CQB$>0$ is stronger than  $\Ric^\perp>0$. Note that the result also holds for the exceptional (non-Hermitian symmetric) K\"ahler C-space $(F_4, \alpha_4)$ since for such a space $\lambda=11/2$ and the biggest eigenvalue of $Q$ is 1. A future  natural project  is to classify all the compact K\"ahler manifolds with CQB$>0$. The example in \cite{NWZ}, shows that there certainly are compact K\"ahler manifolds with $\Ric^\perp>0$, but not homogeneous.

The second related curvature is a dual version of CQB, which is  motivated by the study of the compact dual of the noncompact Hermitian symmetric spaces in \cite{CV}. We denote  it by $^d$CQB. It  is defined as a quadratic Hermitian form on the space of linear maps $A:T'N \to T''N$:
$$
^dCQB_R(A)\doteqdot R(\overline{A(E_i)}, A(E_i), E_k, \overline{E}_k)+R(E_i, \overline{E}_k, \overline{A(E_i)}, A(E_k)).
$$
Similarly we can  introduce the concept $^d$CQB$_k>0$. The analogy of $\Ric^\perp$ is
$$\Ric^{+}(X, \overline{X})\doteqdot \Ric(X, \overline{X})+H(X)/|X|^2.$$
 We say  $^d$CQB$_k>0$ if $^d$CQB$(A)>0$ for any $A\ne0$ with rank not greater than $k$. Letting $A$ be the map which satisfies $A(E_1)=\overline{E}_1$ and $A(E_i)=0$ for all $i\ge 2$,
it is easy to see that $^d$CQB$_1>0$ implies that $\Ric^{+}>0$. We discuss geometric implications of these two curvature notions in details next.

\section{Manifolds with $^d$CQB$>0$ and  the properties of $\Ric^+$}

We have seen that $\Ric^{\perp}_k$ interpolates between the orthogonal bisection sectional curvature and $\Ric^\perp$. In a similar manner we can define $\Ric_k^+$, which interpolates between the holomorphic sectional curvature and $\Ric^+$ as $\Ric_k$ does.
First we show that he diameter estimate  in \cite{NZ} for manifolds with $\Ric^\perp$  can be extended to $\Ric^+$ ($\Ric_k$, $\Ric^+_k$).
The argument via the second variational formulae in the proof of Bonnet-Meyer theorem proves the compactness of the K\"ahler manifolds if the $\Ric^{+}$ is uniformly bounded from below by a positive constant.

\begin{theorem}\label{thm-diameter}
Let $(N^n, h)$ be a K\"ahler manifold with $\Ric^{+}(X, \overline{X})\ge (n+3) \lambda |X|^2$ with $\lambda>0$. Then $N$ is compact with diameter bounded from the above by $\sqrt{\frac{2n}{(n+3)\lambda}}\cdot \pi$. Moreover, for any geodesic $\gamma(\eta): [0, \ell]\to N$ with length $\ell>\sqrt{\frac{2n}{(n+3)\lambda}}\cdot \pi$, the index $i(\gamma)\ge 1$.

Similarly, for  a K\"ahler manifold  $(N^n, h)$ with $\Ric_k\ge (k+1)\lambda>0$, its diameter is bounded from above by
$\sqrt{\frac{2k-1}{(k+1)\lambda}}\cdot \pi$; For a K\"ahler manifold  $(N^n, h)$ with $\Ric^\perp_k\ge (k-1)\lambda>0$ its diameter is bounded from above by
$\sqrt{\frac{2}{\lambda}}\cdot \pi$; For a K\"ahler manifold  $(N^n, h)$ with $\Ric^+_k\ge (k+3))\lambda>0$ its diameter is bounded from above by $\sqrt{\frac{2k}{(k+3)\lambda}}\cdot \pi$.
\end{theorem}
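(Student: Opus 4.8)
The plan is to run a Bonnet--Myers type argument adapted to the K\"ahler setting, using the second variation of arc length together with the parallelism of the complex structure. I would treat the $\Ric^+$ statement first and then indicate the modifications for the other three. Assuming $N$ complete (the standing hypothesis in the application to Theorem \ref{thm:3}), let $\gamma : [0, \ell] \to N$ be a unit-speed minimizing geodesic and write $T = \gamma'$. The crucial structural input is that since $(N,h)$ is K\"ahler we have $\nabla J = 0$, so $JT$ is a \emph{parallel} unit normal field along $\gamma$. I would choose a parallel orthonormal frame $\{e_1 = T,\ e_2 = JT,\ e_3, \dots, e_{2n}\}$ along $\gamma$ and attach to each normal direction the variation field $V_i(s) = \phi(s)\, e_i$ with $\phi = \sin(\pi s/\ell)$.

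The key idea, which produces exactly the combination $\Ric + H$ appearing in $\Ric^+$, is to weight the parallel field $JT$ twice. Concretely I would form the index forms $I(V_i, V_i) = \int_0^\ell \big(|\nabla_T V_i|^2 - R(V_i, T, V_i, T)\big)\, ds$ and sum over $i = 2, \dots, 2n$ with an extra copy of the $JT$ slot $i=2$. Since $|\nabla_T V_i|^2 = (\phi')^2$, the total kinetic multiplicity is $2n$, while the curvature terms assemble, after translating the real sectional curvatures into complex ones (with $X = \tfrac{1}{\sqrt 2}(T - \sqrt{-1}\,JT)$ the associated $(1,0)$ vector), into $\sum_i R(e_i,T,e_i,T) + R(JT,T,JT,T) = \Ric(X,\overline X) + H(X) = \Ric^+(X, \overline X)$. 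Using $\int_0^\ell (\phi')^2\,ds = \pi^2/(2\ell)$ and $\int_0^\ell \phi^2\,ds = \ell/2$ together with the hypothesis, the summed index form is bounded by
$$
\sum_i I(V_i,V_i) \le \frac{n\pi^2}{\ell} - \frac{\ell}{2}\,(n+3)\lambda,
$$
which is strictly negative once $\ell > \sqrt{2n/((n+3)\lambda)}\,\pi$. This contradicts the minimality of $\gamma$; hence no minimizing geodesic exceeds that length, giving the diameter bound, and completeness plus bounded diameter yields compactness by Hopf--Rinow. The claim $i(\gamma) \ge 1$ follows because the index form is already negative on the displayed span, so a longer geodesic carries a nontrivial negative subspace.

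For the three remaining bounds the scheme is identical, with two adjustments. For the $k$-versions I would confine the variation fields to the real $2k$-plane underlying a complex $k$-subspace $\Sigma \ni X$; parallel transporting $\Sigma$ along $\gamma$ keeps it $J$-invariant and containing $T(s)$, so the hypothesis---which quantifies over \emph{all} $k$-subspaces---applies pointwise to $\Sigma(s)$, and restricting the curvature sum to $\Sigma$ replaces $\Ric$ by $\Ric_k$. For the ``$\perp$'' versions I would \emph{omit} the $JT$ direction rather than double it, so the curvature sum becomes $\Ric_k(X,\overline X) - H(X) = \Ric^\perp_k(X, \overline X)$ and the kinetic multiplicity drops to $2k-2$; for the ``$+$'' versions I again double $JT$, giving multiplicity $2k$ and the combination $\Ric_k + H$. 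The same two integral identities then reproduce the stated thresholds $\sqrt{(2k-1)/((k+1)\lambda)}\,\pi$, $\sqrt{2/\lambda}\,\pi$, and $\sqrt{2k/((k+3)\lambda)}\,\pi$ respectively.

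I expect the main obstacle to be purely the normalization bookkeeping: pinning down the exact constants relating the real sectional curvatures $R(e_i, T, e_i, T)$ and $R(JT, T, JT, T)$ to the complex quantities $\Ric_k(X, \overline X)$ and $H(X)$ under the metric conventions in use, so that the curvature sums equal $\Ric^{\pm}_k(X, \overline X)$ on the nose and the multiplicities $2k$, $2k-2$, etc. combine with $\int(\phi')^2$ to deliver the \emph{sharp} (rather than merely finite) diameter constants. The geometric heart---that $JT$ is parallel, and that doubling or omitting this single direction toggles the curvature sum between $\Ric + H$ and $\Ric - H$---is the crux; everything else is the standard second-variation estimate.
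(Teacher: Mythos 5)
Your proposal is correct and is essentially the paper's own argument: the paper proves this theorem exactly by the Bonnet--Myers second-variation scheme you describe (as used for $\Ric^\perp$ in \cite{NZ}), with the parallel field $JT$ doubled or omitted to toggle between $\Ric_k+H$ and $\Ric_k-H$. Your constants all check out under the standard K\"ahler normalization $H(X)=K(T,JT)$ and $\Ric(X,\overline X)=\Ric_{\mathbb{R}}(T,T)$ for unit $T$, so the bookkeeping you worried about is in fact already consistent.
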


Note that the result (for $\Ric^+$) is slightly better than $\sqrt{\frac{2n-1}{(n+1)\lambda}}\pi$, the one predicted by the Bonnet-Meyer estimate assuming $\Ric(X, \overline{X})\ge (n+1)\lambda |X|^2$ for $n\ge2$.  But it is roughly about $\sqrt{2}$ times the one predicted by the Tsukamoto's theorem in terms of the lower bound of the holomorphic sectional curvature. Let $N=\mathbb{P}^1\times \cdots \times \mathbb{P}^1$, namely the product of  $n$  copies of $\mathbb{P}^1$, its diameter is  $\sqrt{\frac{n}{2}}\pi$. An easy computation shows that it has $\Ric= 2$ and $H\ge \frac{2}{n}$. This shows that the upper bound provided by Tsukamoto's theorem holds equality on both $\mathbb{P}^n$ and $N=\mathbb{P}^1\times \cdots \times \mathbb{P}^1$. The product of $n$-copies of $\mathbb{P}^1$  also illustrates a compact K\"ahler manifold (after proper scaling)  with $\Ric= n+1$, but its diameter is roughly about $\sqrt{2}$ times of that of $\mathbb{P}^n$. The product example and $\mathbb{P}^n$ indicate that the above estimate on the diameter is far from being sharp.

We prove Theorem \ref{thm:3} via a  vanishing theorem with weaker assumptions. For that we introduce the scalar curvatures $S^+(x,\Sigma)$ which is defined as
$$
S^{+}_k(x, \Sigma)=k \aint_{Z\in \Sigma, |Z|=1} \Ric^{+}(Z, \overline{Z})\, d\theta(Z)
$$
for any $k$-dimensional subspace $\Sigma\subset T_xN$. Similarly we say $S_k^+>0$ if $S_k^+(x, \Sigma)>0$ for any $x$ and $\Sigma$.

\begin{theorem} Assume that $S_k^+>0$, then $N$ is BC-$p$, positive and  $h^{p, 0}=0$ for $k\le p\le n$.
\end{theorem}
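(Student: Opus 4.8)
The plan is to establish the BC-$p$ positivity first, for every $k\le p\le n$, and then read off $h^{p,0}=0$ from it exactly as in the proof of Theorem~\ref{thm:61-add} (or directly from Corollary~\ref{coro:6-add-3}). The algebraic heart of the matter is an identity for $S_p^+(x,\Sigma)$ together with the fact, built into the definition of BC-$p$ positivity, that the test vector $v$ is free to range over all of $T'_{x_0}N$ and not merely over $\Sigma$.

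First I would record the analogue of (\ref{eq:24}) for $\Ric^+$. Since $\Ric^+(Z,\overline Z)=\Ric(Z,\overline Z)+H(Z)$ differs from $\Ric^\perp$ only by the sign of the holomorphic sectional curvature term, the same Berger-type averaging that produced (\ref{eq:24}) gives, for any $p$-dimensional $\Sigma$ with unitary frame $\{E_1,\dots,E_p\}$,
\[
S_p^+(x,\Sigma)=\left(\Ric(E_1,\overline E_1)+\cdots+\Ric(E_p,\overline E_p)\right)+\frac{2}{p+1}\,S_p(x,\Sigma),
\]
where the $\Ric(E_i,\overline E_i)$ are full Ricci curvatures and $S_p(x,\Sigma)=\sum_{i,j=1}^p R_{i\bar i j\bar j}$ is the restricted scalar curvature. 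I would also note the monotonicity $S_k^+>0\Rightarrow S_p^+>0$ for $p\ge k$: realizing the uniform average over the unit sphere of a $p$-plane $\Sigma_p$ as the iterated average, first over $k$-planes $\Sigma_k\subset\Sigma_p$ and then over unit vectors of $\Sigma_k$, yields $S_p^+(x,\Sigma_p)=\frac{p}{k}\aint_{\Sigma_k\subset\Sigma_p}S_k^+(x,\Sigma_k)\,d\Sigma_k>0$.

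Next I would prove BC-$p$ positivity by contradiction. Fix $x_0$ and a unitary frame $\{E_1,\dots,E_p\}$ spanning $\Sigma$, and suppose that $\sum_{i=1}^p R(v,\overline v,E_i,\overline E_i)\le 0$ for every $v\in T'_{x_0}N$. Averaging this inequality over the unit sphere of $\Sigma$ gives $S_p(x_0,\Sigma)\le 0$, whereas averaging it over the full unit sphere of $T'_{x_0}N$ gives $\Ric(E_1,\overline E_1)+\cdots+\Ric(E_p,\overline E_p)\le 0$. Feeding both into the displayed identity forces $S_p^+(x_0,\Sigma)\le 0$, contradicting $S_p^+>0$. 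Hence the curvature is BC-$p$ positive for every $k\le p\le n$. The decisive point—and the place where the sign in $\Ric^+$ is used—is that the two different averages (over $\Sigma$ and over all of $T'_{x_0}N$) enter the formula for $S_p^+$ with nonnegative coefficients, so their simultaneous nonpositivity is incompatible with $S_p^+>0$; this is exactly why one must allow $v$ to leave $\Sigma$.

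Finally, the vanishing $h^{p,0}=0$ for $k\le p\le n$ is now immediate from Corollary~\ref{coro:6-add-3}. Alternatively one argues as in Theorem~\ref{thm:1}: a nonzero holomorphic $(p,0)$-form yields, through the viscosity/comass consideration at the maximum point $x_0$ of its comass, the inequality (\ref{eq:23}), namely $\sum_{j=1}^p R_{v\bar v j\bar j}\le 0$ for all $v$; but this is precisely the failure of BC-$p$ positivity on $\Sigma=\operatorname{span}\{\partial/\partial z^1,\dots,\partial/\partial z^p\}$, which we have just excluded. I expect the only genuine subtlety to be the careful bookkeeping between the full Ricci trace and the restricted scalar curvature $S_p(x_0,\Sigma)$ in the identity for $S_p^+$; the analytic input needed for (\ref{eq:23}) is already supplied by Section~3, so no new machinery is required.
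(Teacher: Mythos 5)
Your proposal is correct and follows essentially the same route as the paper: the same Berger-type averaging identity (\ref{eq:3-42}), the same monotonicity $0<S_k^+(x_0)\le S_p^+(x_0,\Sigma)$, and the same contradiction at the comass maximum between $S_p(x_0,\Sigma)\le 0$ and the nonpositivity of the Ricci trace, with $h^{p,0}=0$ then read off from the viscosity/comass argument of Theorem \ref{thm:1} exactly as the paper does (the paper's proof establishes BC-$p$ positivity implicitly inside the same contradiction you make explicit). The only cosmetic difference is bookkeeping: you obtain $\sum_{i=1}^p\Ric(E_i,\overline{E}_i)\le 0$ by averaging the hypothesis over the full unit sphere of $T'_{x_0}N$ and conclude $S_p^+(x_0,\Sigma)\le 0$ directly, whereas the paper sums over the complementary frame directions to get $\sum_{i=1}^p\Ric_{i\bar{i}}\le S_p(x_0,\Sigma)$ and hence $S_p^+(x_0,\Sigma)\le \frac{p+3}{p+1}S_p(x_0,\Sigma)\le 0$; both variants yield the same contradiction.
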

\begin{proof} The first part of proof follows similarly as in that of Theorem \ref{thm:1}. Assuming the existence of a nonzero holomorphic $(p, 0)$-form $\phi$ leads to the  conclusion that at the point $x_0$ where the maximum of the comass $\|\phi\|_0$ is attained we have that
\begin{equation}\label{eq:6-help-contrary}
0\ge \sum_{j=1}^p R_{v\bar{v} j\bar{j}}
\end{equation}
for any $v\in T'_{x_0} N$, for a particularly chosen  frame $\{\frac{\partial}{\partial z_\ell}\}_{\ell=1, \cdots, n}$ with $\Sigma=\operatorname{span}\{\frac{\partial}{\partial z_1}, \cdots,  \frac{\partial}{\partial z_p}\}$. This  implies that $S_p(x_0, \Sigma)\le 0$, by applying the above to $v=\{\frac{\partial}{\partial z_i}\}_{1\le i\le p}$.

Now a similar calculation as that of Section 2 shows that \begin{eqnarray}
\frac{1}{p} S^{+}_p(x_0, \Sigma)&=&\aint_{Z\in \Sigma, |Z|=1} \Ric^+(Z, \overline{Z})\, d\theta(Z)=\aint_{Z\in \Sigma, |Z|=1} \left(\Ric(Z, \overline{Z})+H(Z)\right)\, d\theta(Z)\nonumber\\
&=&\aint \frac{1}{Vol(\mathbb{S}^{2n-1})}\left(\int_{\mathbb{S}^{2n-1}} \left(nR(Z, \overline{Z}, W, \overline{W})+H(Z)\right)\, d\theta(W)\right)\, d\theta(Z)\nonumber\\
&=&\frac{1}{Vol(\mathbb{S}^{2n-1})}\int_{\mathbb{S}^{2n-1}}\left( \aint \left(nR(Z, \overline{Z}, W, \overline{W})+H(Z)\right)\, d\theta(Z)\right)d\theta(W)\nonumber\\
&=& \frac{1}{p}\left(\Ric_{1\bar{1}}+\Ric_{2\bar{2}}+\cdots +\Ric_{p\bar{p}} \right)+\frac{2}{p(p+1)}S_p(x_0, \Sigma).\label{eq:3-42}
\end{eqnarray}
Using the estimate (\ref{eq:23}) similarly as in Section 2 (cf.  (\ref{eq:25})) we have that
$$
\Ric_{1\bar{1}}+\Ric_{2\bar{2}}+\cdots +\Ric_{p\bar{p}} \le S_p(x_0, \Sigma ).
$$
Thus together with (\ref{eq:3-42}) it implies that
$$
 0< S^+_k(x_0)\le S^{+}_p(x_0, \Sigma)\le \frac{p+3}{p+1} S_p(x_0, \Sigma).
$$
This is a contradiction.
\end{proof}
Theorem \ref{thm:3} follows from the above theorem since $\Ric^+>0$ implies that $S^+_p>0$ for all $1\le p\le n$.
 In the theorem below we extend the projectivity part of Theorem \ref{thm:4} to $\Ric^{\perp}_k$ and $\Ric^+_k$.

\begin{theorem}\label{thm:6-added-ricciperp} Let $(N, h)$ be a compact K\"ahler manifold. If $\Ric^\perp_k>0$, or $\Ric^+_k>0$ for some $1\le k\le n$, then $N$ is BC-$p$ positive for $p\ge 2$. In particular $h^{p, 0}=0$ for $1\le p\le n$, and   $N$ is projective and simply-connected.
\end{theorem}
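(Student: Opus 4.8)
The plan is to reduce the whole statement to the single assertion that $N$ is BC-$p$ positive for every $1\le p\le n$ (in particular on the stated range $p\ge 2$). Granting this, $h^{p,0}=0$ for all such $p$ by Corollary \ref{coro:6-add-3} (the comass argument); projectivity then follows from $h^{2,0}=0$ and Kodaira's embedding theorem as in Theorem \ref{thm:61-add}; and simple-connectedness follows from the Kobayashi--Riemann--Roch argument recalled in the introduction, applied to the universal cover $\widetilde N$. The latter carries the same local curvature positivity and is compact by the diameter bound of Theorem \ref{thm-diameter}, so that $\chi(\mathcal O_{\widetilde N})=\chi(\mathcal O_N)=1$ and the number of sheets is forced to be $1$.

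To prove BC-$p$ positivity at a point $x$, I fix a unitary $p$-frame $\{E_1,\dots,E_p\}$ spanning $\Sigma_p\subset T'_xN$ and argue by contradiction, supposing that the Hermitian form $Q(v):=\sum_{j=1}^p R(v,\overline{v},E_j,\overline{E}_j)$ is $\le 0$ for every $v$; taking $v=E_i$ and summing yields $S_p(x,\Sigma_p)=\sum_j Q(E_j)\le 0$. Writing $k$ for the index with $\Ric^\perp_k>0$ (resp. $\Ric^+_k>0$), I first treat $p\le k$: extend $\Sigma_p$ to a $k$-subspace $\Sigma_k=\Sigma_p\oplus\Pi$ with $\Pi=\operatorname{span}\{G_{p+1},\dots,G_k\}\perp\Sigma_p$, and average the defining expression of $\Ric^\perp_k$ (resp. $\Ric^+_k$) over $Z$ in the unit sphere of $\Sigma_p$. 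With $\aint_{Z\in\Sigma_p}R(Z,\overline{Z},F,\overline{F})\,d\theta(Z)=\frac1p\sum_i R(E_i,\overline{E}_i,F,\overline{F})$ and $\aint_{Z\in\Sigma_p}H(Z)\,d\theta(Z)=\frac{2}{p(p+1)}S_p(x,\Sigma_p)$, this gives
\begin{equation*}
\aint_{Z\in\Sigma_p}\Ric^\perp_k(Z,\Sigma_k)\,d\theta(Z)=\frac{p-1}{p(p+1)}\,S_p(x,\Sigma_p)+\frac1p\sum_{\beta=p+1}^{k}Q(G_\beta),
\end{equation*}
the coefficient $\frac{p-1}{p(p+1)}$ being replaced by $\frac{p+3}{p(p+1)}$ in the $\Ric^+_k$ case and nothing else changing. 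The left-hand side is strictly positive by hypothesis, while on the right the coefficient of $S_p(x,\Sigma_p)\le 0$ is nonnegative and every cross term $Q(G_\beta)\le 0$, so the right-hand side is $\le 0$: a contradiction.

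For the complementary range $p>k$ no such extension exists; here I use that the same averaging over a $k$-subspace $\Sigma'$ (with empty $\beta$-sum) gives $\aint_{Z\in\Sigma'}\Ric^\perp_k(Z,\Sigma')\,d\theta(Z)=\frac{k-1}{k(k+1)}S_k(x,\Sigma')$, so that the positivity of $\Ric^\perp_k$ (resp. $\Ric^+_k$) forces $S_k(x,\Sigma')>0$ for every $\Sigma'$, whence $S_k(x)>0$ unconditionally. The scalar-curvature monotonicity of Proposition \ref{prop:6-add-1} then gives $S_p(x)>0$, contradicting $S_p(x,\Sigma_p)\le 0$ once more. This establishes BC-$p$ positivity for all $1\le p\le n$, and with it the theorem.

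The step needing care is the $\Ric^+_k$ case with $p\neq k$. For the orthogonal curvature the holomorphic sectional term cancels, and a short combinatorial averaging shows $\Ric^\perp_k>0\Rightarrow\Ric^\perp_{k'}>0$ for every $k'\ge k$, which would let one reduce directly to the genuine $\Ric^\perp=\Ric^\perp_n$ of Theorem \ref{thm:1}; the family $\Ric^+_k$ admits no such monotonicity, because the extra $+H$ term survives the averaging, so a reduction to $\Ric^+$ is unavailable and the averaging-extension identity above becomes indispensable. Its decisive feature, which I expect to be the heart of the argument, is that the cross terms $\sum_\beta Q(G_\beta)$ automatically inherit the sign forced by the assumed failure of BC-$p$ positivity and therefore reinforce the contradiction instead of obstructing it; checking this identity and its sign bookkeeping, together with the uniform bound $S_k(x)>0$ used when $p>k$, is essentially the whole computation.
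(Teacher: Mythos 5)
Your proposal is correct and follows essentially the same route as the paper: your extension-and-averaging identity over the unit sphere of $\Sigma_p$ is exactly the paper's computation of $(S^{\perp})'_p$ (its (\ref{eq:24})--(\ref{eq:25}) restricted to the extended $k$-subspace $\Sigma'$), with your cross terms $\frac1p\sum_{\beta}Q(G_\beta)$ being the paper's $\sum_{\ell=p+1}^{k}\sum_{j=1}^{p}R_{\ell\bar{\ell}j\bar{j}}\le 0$, and your $p>k$ case reproduces the mechanism of Corollary \ref{coro:6-add-3} (trace identity plus Proposition \ref{prop:6-add-1}). The differences are only organizational: you treat all $p\le k$ uniformly by a single identity and write out the $\Ric^+_k$ coefficient $\frac{p+3}{p(p+1)}$ explicitly, where the paper case-splits (handling $p\ge k-1$ via Corollary \ref{coro:6-add-3} and $p\le k-2$ by restriction) and disposes of the $\Ric^+_k$ case as ``similar''.
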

\begin{proof} We only provide the proof for $\Ric^\perp_k$ since the proof for $\Ric^{+}_k$ is similar. As before it suffices to prove that $N$ is BC-$p$ positive for all $1\le p\le n$ since $h^{p, 0}=0$ follows from this. By Theorem \ref{thm:1} and Corollary \ref{coro:6-add-3}, the BC-$p$ positivity (for all $1\le p\le n$) is known for $k=2, n$. Thus  we only need to prove it for $3\le k\le n-1$. By Corollary \ref{coro:6-add-3} again we have BC-$p$ positivity for $p\ge k-1$. Hence we  only need to prove it for $p\le k-2$. 

The proof is essentially the same as the proof of Theorem \ref{thm:1}. We prove by the contradiction argument.
Assume that there exists  unitary $p$-vectors $\{E_1, \cdots, E_p\}$ such that (\ref{eq:6-help-contrary}) hold for any $v\in T_x'N$.  Let $\Sigma=\operatorname{span}\{E_1, \cdots, E_p\}$. Since $k-2\ge p$ we extend them into unitary $k$-vectors $\{E_1, \cdots, E_k\}$. Let $\Sigma'$ be the $k$-dimensional subspace spanned by them. Clearly $\Sigma \subset \Sigma'$. We denote by $\Ric'$ the Ricci curvature restricted to $\Sigma'$. We also define similarly $(\Ric^\perp)'$, $S'_p$ and $(S^\perp)'_p$ correspondingly. In particular $(\Ric^\perp)'$ is $\Ric^{\perp}_k(x, \Sigma')$, $(S^\perp)'_p$ is the average of $(\Ric^\perp)'$ on the unit sphere of a $p$-dimensional subspace of $\Sigma'$, and  $S_p'(x,\Sigma)=\sum_{i, j=1}^p R_{i\bar{i}j\bar{j}}$.
Now the proof of Theorem \ref{thm:1} implies that for any $x\in N$
$$
(S^{\perp})'_p(x)\le \frac{p-1}{p+1} S'_p(x, \Sigma) = \frac{p-1}{p+1}\sum_{i, j=1}^p R_{i\bar{i}j\bar{j}}\le 0.
$$
On the other hand, the assumption implies that  $(\Ric^\perp)'>0$, hence $(S^{\perp})'_p>0$. This induces a contradiction. Hence we have $N$ is BC-$p$ positive for any $1\le p\le n$ if $\Ric^\perp_k>0$.
\end{proof}

Applying argument similar to that of the last section we also have the following result.

\begin{proposition}\label{prop:41-ct-Ric+} Let $(N^n, h)$ be a compact K\"ahler manifold of complex dimension $n$ with quasi-positive $\Ric^+$ (or $\Ric^+_k$). Assume further that $h^{1,1}(N)=1$ (or $\rho(N)=1$). Then $c_1(N)>0$, namely $N$ is Fano.
\end{proposition}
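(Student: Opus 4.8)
The plan is to follow the scheme used for Theorem \ref{thm:c1} and Proposition \ref{prop:simple}: reduce the Fano conclusion to positivity properties of the scalar curvature $S$, and then invoke the hypothesis $h^{1,1}(N)=1$ together with Yau's solution of the Calabi conjecture. The one ingredient needed is the $\Ric^+$-analogue of the Berger averaging identity of Lemma \ref{lm:Gold}, and this is already contained in the computation (\ref{eq:3-42}): taking $\Sigma=T_x'N$ there (i.e. $k=p=n$, so that $\Ric_{1\bar1}+\cdots+\Ric_{n\bar n}=S$ and $S_n(x,T_x'N)=S$) yields
\begin{equation*}
\frac{n+3}{n(n+1)}\,S(x)=\frac{1}{Vol(\mathbb{S}^{2n-1})}\int_{|Z|=1,\,Z\in T_x'N}\Ric^{+}(Z,\overline{Z})\,d\theta(Z),
\end{equation*}
the exact counterpart of (\ref{eq:scalar-ricciperp}), with the sign of the $H(Z)$-contribution reversed and $(n-1)$ replaced by $(n+3)$.

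First I would exploit this identity. Quasi-positivity of $\Ric^+$ makes the integrand nonnegative everywhere, and at a point $x_0$ where $\Ric^+>0$ the spherical integral is strictly positive; hence $S\geq 0$ on $N$ and, by continuity, $S>0$ on a neighbourhood of $x_0$. Combined with (\ref{eq:42}) this gives
\begin{equation*}
d(K_N^{-1})=\int_N c_1(N)\wedge\omega_h^{n-1}=\frac{1}{n}\int_N S\,\omega_h^{\,n}>0.
\end{equation*}
Since $h^{1,1}(N)=1$ forces $[c_1(N)]=\ell\,[\omega_h]$ for a real constant $\ell$, the degree formula (\ref{eq:43}) gives $\ell\,V(N)=d(K_N^{-1})>0$, so $\ell>0$ and $[c_1(N)]=\ell[\omega_h]$ is a Kähler class. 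Thus $c_1(N)>0$, i.e. $N$ is Fano, and by Yau's theorem $N$ carries a Kähler metric of positive Ricci curvature.

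For the $\Ric^+_k$ version the only change is the passage from $\Ric^+_k\geq 0$ to $S\geq 0$. Averaging $\Ric^+_k(x,\Sigma)(Z,\overline Z)$ over the unit sphere of a $k$-subspace $\Sigma$ gives, exactly as in (\ref{eq:3-42}), the value $\frac{k+3}{k(k+1)}S_k(x,\Sigma)$, so quasi-positivity of $\Ric^+_k$ forces $S_k(x,\Sigma)\geq 0$ for every $\Sigma$ and $x$, and $S_k(x_0,\Sigma)>0$ for every $\Sigma$ at the distinguished point. Averaging $S_k(x,\Sigma)=\sum_{i,j=1}^k R_{i\bar i j\bar j}$ over the Grassmannian $G_{k,n}(T_x'N)$ and invoking Schur's lemma, $\aint_\Sigma S_k(x,\Sigma)$ is a positive multiple of $\sum_{i,j}R_{i\bar i j\bar j}=S(x)$, whence once more $S\geq 0$ everywhere and $S>0$ somewhere, and the degree argument applies verbatim. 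The $\rho(N)=1$ formulation follows the same route as in Theorem \ref{thm:c1}, where one records that the relevant Hodge numbers vanish so that $\rho(N)=1$ becomes equivalent to $b_2=1$, i.e. to $h^{1,1}=1$. I expect this equivalence to be the point requiring the most care: under a merely quasi-positive curvature hypothesis one must first secure projectivity and the vanishing $h^{2,0}=0$ before identifying $c_1(N)$ with a positive multiple of the ample generator, whereas the strict-positivity arguments of Theorem \ref{thm:3} do not apply directly. Once $S\geq 0$ with $S>0$ somewhere is in hand, however, the remainder is routine.
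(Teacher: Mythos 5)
Your proof is correct and takes essentially the same route as the paper: the paper's own proof consists precisely of the averaging identity (\ref{eq:scalar-ricci+}) (which you correctly recover by specializing (\ref{eq:3-42}) to $k=p=n$) together with the degree argument of Section 5, namely $S\ge 0$ with $S>0$ somewhere, hence $d(K_N^{-1})>0$ via (\ref{eq:42})--(\ref{eq:43}), hence $\ell>0$ in $[c_1(N)]=\ell[\omega_h]$, and Yau's theorem. Your Grassmannian averaging for the $\Ric^+_k$ case correctly fills in what the paper leaves implicit (``the same argument can be applied''), and your caveat about the $\rho(N)=1$ formulation accurately reflects the care the paper itself defers to its Section 5 discussion.
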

The proof of this result and the following lemma, which plays  the analogue role of  Lemma \ref{lm:Gold}, is the same as that of the last section.

\begin{lemma}Let $(N^n, h)$ be a K\"ahler manifold of complex dimension $n$.  At any point $p\in N$, \begin{equation}\label{eq:scalar-ricci+}
\frac{n+3}{n(n+1)}S(p)=\frac{1}{Vol(\mathbb{S}^{2n-1})}\int_{|Z|=1, Z\in T_p'N} \Ric^{+}(Z, \overline{Z})\, d\theta(Z)
\end{equation}
where $S(p)=\sum_{i=1}^n \Ric(E_i, \overline{E}_i)$ (with respect to any unitary frame $\{E_i\}$) denotes the scalar curvature at $p$.
\end{lemma}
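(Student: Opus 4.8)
The plan is to reduce the identity to the already-established companion formula for $\Ric^\perp$, namely Lemma~\ref{lm:Gold}, together with the elementary average of a Hermitian form over the unit sphere. The starting observation is purely algebraic: since $\Ric^\perp(Z,\overline Z)=\Ric(Z,\overline Z)-H(Z)/|Z|^2$ and $\Ric^+(Z,\overline Z)=\Ric(Z,\overline Z)+H(Z)/|Z|^2$, the two Bochner-type Ricci curvatures are tied to the genuine Ricci curvature by
$$
\Ric^+(Z,\overline Z)=2\,\Ric(Z,\overline Z)-\Ric^\perp(Z,\overline Z).
$$
Thus the holomorphic sectional curvature term, which is the only nontrivial piece to average, never has to be handled directly; it is absorbed into the $\Ric^\perp$ average that is already known.

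First I would record the standard fact that for a unit vector $Z$ on $\mathbb{S}^{2n-1}\subset T_p'N$ the average of the Hermitian form $\Ric(Z,\overline Z)$ equals its normalized trace,
$$
\aint_{Z\in T_p'N,\,|Z|=1}\Ric(Z,\overline Z)\,d\theta(Z)=\frac{1}{n}\,S(p),
$$
which follows because $\aint \langle AZ,\overline Z\rangle\,d\theta(Z)=\frac{1}{n}\operatorname{tr}A$ for any Hermitian endomorphism $A$, applied to $A=\Ric$ with $\operatorname{tr}\Ric=S(p)$. Next I would invoke Lemma~\ref{lm:Gold}, which gives $\aint \Ric^\perp(Z,\overline Z)\,d\theta(Z)=\frac{n-1}{n(n+1)}S(p)$. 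Averaging the displayed algebraic identity and inserting these two values yields
$$
\aint \Ric^+(Z,\overline Z)\,d\theta(Z)=\frac{2}{n}S(p)-\frac{n-1}{n(n+1)}S(p)=\frac{2(n+1)-(n-1)}{n(n+1)}S(p)=\frac{n+3}{n(n+1)}S(p),
$$
which is exactly the claimed equation (\ref{eq:scalar-ricci+}).

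There is essentially no analytic obstacle here; the only input beyond linear algebra is Berger's averaging of the holomorphic sectional curvature, and that has already been packaged into Lemma~\ref{lm:Gold}. As an alternative, entirely self-contained route I would specialize the computation (\ref{eq:3-42}) of the present section to the full tangent space, taking $p=n$ and $\Sigma=T_p'N$: there $S_n(p,\Sigma)=\sum_{i,j=1}^n R_{i\bar i j\bar j}=S(p)$ and $\frac{1}{n}(\Ric_{1\bar 1}+\cdots+\Ric_{n\bar n})=\frac{1}{n}S(p)$, so that formula collapses to $\aint\Ric^+(Z,\overline Z)\,d\theta(Z)=\frac{1}{n}S(p)+\frac{2}{n(n+1)}S(p)=\frac{n+3}{n(n+1)}S(p)$, giving the same conclusion and confirming the assertion in the text that the proof runs identically to that of the previous section.
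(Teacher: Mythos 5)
Your proposal is correct and is essentially the paper's own argument: the paper proves this lemma exactly as Lemma~\ref{lm:Gold}, i.e.\ by Berger's averaging identities $\aint \Ric(Z,\overline Z)\,d\theta=\frac{1}{n}S$ and $\aint H(Z)\,d\theta=\frac{2}{n(n+1)}S$, which is precisely what your second route (specializing (\ref{eq:3-42}) to $p=n$, $\Sigma=T_p'N$) carries out. Your primary route via $\Ric^{+}=2\Ric-\Ric^{\perp}$ and Lemma~\ref{lm:Gold} is merely a repackaging of the same two averages, so both versions are sound and the arithmetic $\frac{2}{n}-\frac{n-1}{n(n+1)}=\frac{n+3}{n(n+1)}$ checks out.
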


Following the argument in the Appendix of \cite{NWZ} we also have that {\it  a $\Ric^+$-Einstein K\"ahler metric must be of constant curvature}. In particular, the one with zero scalar curvature must be flat. Hence we have the same result as Corollary \ref{coro:11} if we replace $\Ric^\perp$ by $\Ric^+$.

\begin{corollary}\label{coro:43}
Let $(N, h)$ be a compact K\"ahler manifold of complex dimension $n$ with $\Ric^{+}\ge 0$ (or $\Ric^{+}_k\ge 0$).
 Assume further that $h^{1, 1}(N)=1$ and $N$ is locally irreducible.  Then $c_1(N)>0$, namely $N$ is Fano. Similar result holds under the assumption $\Ric^+ \le 0$.
\end{corollary}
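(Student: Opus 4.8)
The plan is to transcribe the proof of Corollary \ref{coro:11} verbatim, replacing the role played there by Lemma \ref{lm:Gold} with its $\Ric^+$-analogue \eqref{eq:scalar-ricci+}, and replacing the flatness input (Theorem 6.1 of \cite{NWZ}) by the $\Ric^+$-Einstein rigidity recorded just above the statement. First I would use $h^{1,1}(N)=1$ to write $[c_1(N)]=\ell\,[\omega_h]$ for a single real constant $\ell$. Pairing with $\omega_h^{n-1}$ and invoking \eqref{eq:42} and \eqref{eq:43} then gives $d(K_N^{-1})=\ell\,V(N)=\tfrac1n\int_N S\,\omega_h^n$, so the sign of $\ell$ is exactly the sign of the total scalar curvature.

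Next, the hypothesis $\Ric^+\ge 0$ together with the averaging identity \eqref{eq:scalar-ricci+} forces $S(p)\ge 0$ at every point $p\in N$, hence $\ell\ge 0$ and $[c_1(N)]\ge 0$; by Yau's theorem this already yields a nonnegative Ricci metric. To upgrade this to strict positivity I would argue by contradiction: if $\ell=0$, then $\int_N S\,\omega_h^n=0$ while $S\ge 0$, so $S\equiv 0$. Feeding this back into \eqref{eq:scalar-ricci+} shows that at each point the spherical average of the nonnegative quantity $\Ric^+(Z,\overline Z)$ vanishes, and therefore $\Ric^+\equiv 0$ everywhere on $N$.

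The crucial step is then to exclude $\Ric^+\equiv 0$. A metric with $\Ric^+\equiv 0$ is $\Ric^+$-Einstein with Einstein constant $0$, so by the rigidity obtained from the Appendix-of-\cite{NWZ} argument (a $\Ric^+$-Einstein K\"ahler metric has constant holomorphic sectional curvature, the zero-scalar case being flat) the metric is flat. A flat compact K\"ahler manifold is a quotient of $\mathbb{C}^n$ and is in particular locally a product, contradicting local irreducibility for $n\ge 2$. Hence $\ell>0$, i.e. $c_1(N)>0$ and $N$ is Fano. The case $\Ric^+\le 0$ is symmetric: \eqref{eq:scalar-ricci+} gives $S\le 0$, hence $\ell\le 0$; the same dichotomy rules out $\ell=0$ via flatness and irreducibility, and one concludes $\ell<0$, so $K_N$ is ample. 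For the $\Ric^+_k$ variants I would replace \eqref{eq:scalar-ricci+} by the corresponding spherical-average relation over $k$-planes (of the type used in Proposition \ref{prop:41-ct-Ric+} and in the proof of Corollary \ref{coro:6-add-3}), which again controls the sign of $S$ and again collapses the borderline $\ell=0$ case to the flat one.

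The only genuinely delicate point, I expect, will be the rigidity ``$\Ric^+\equiv 0\Rightarrow$ flat.'' Since $\Ric^+(X,\overline X)$ is \emph{not} the holomorphic sectional curvature of a Hermitian symmetric sesquilinear form, one cannot directly cite the classical Einstein/pinching theorems; instead one must reproduce the computation of the Appendix of \cite{NWZ}, showing that the vanishing of the averaged quantity $\Ric^+$ forces the full curvature tensor to vanish (via the Berger-type averaging behind Lemma \ref{lm:Gold} and its $\Ric^+$ counterpart together with the trace/Bianchi identities). Once that rigidity is in hand, the remainder is a routine transcription of the proof of Corollary \ref{coro:11}.
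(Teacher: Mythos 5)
Your proposal is correct and coincides with the paper's own proof: the paper likewise reduces Corollary \ref{coro:43} to the argument of Corollary \ref{coro:11}, substituting the averaging identity (\ref{eq:scalar-ricci+}) for Lemma \ref{lm:Gold} to control the sign of $\ell$ in $[c_1(N)]=\ell[\omega_h]$, and substituting the $\Ric^+$-Einstein rigidity (constant curvature, hence flat when $S\equiv 0$, obtained by following the Appendix of \cite{NWZ}) for Theorem 6.1 of \cite{NWZ} to rule out $\ell=0$ against local irreducibility. The delicate point you flag --- that $\Ric^+\equiv 0$ forces flatness --- is exactly the step the paper handles by that cited computation.
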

The result similar to Corollary \ref{coro:61} holds for $\Ric^+>0$ and  $\rho(N)=1$, in view of Theorem \ref{thm:3}, Proposition \ref{prop:41-ct-Ric+} and Corollary \ref{coro:43}.

\begin{corollary}\label{cor:44}
Any compact K\"ahler manifold $(N, h)$  with quasi-positive $\Ric^+$ (or quasi-positive $\Ric_k^+$) and $\rho(N)=1$, must be rationally connected.
\end{corollary}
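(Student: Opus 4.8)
The plan is to deduce rational connectedness in two moves: first upgrade the differential-geometric hypothesis to the algebro-geometric Fano condition $c_1(N)>0$, and then invoke the classical theorem that Fano manifolds are rationally connected. This mirrors exactly the route used for Corollary \ref{coro:61} in the $\Ric^\perp$ case, and is the reason this statement is placed immediately after Proposition \ref{prop:41-ct-Ric+} and Corollary \ref{coro:43}.

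First I would apply Proposition \ref{prop:41-ct-Ric+} directly: under the stated hypotheses, that quasi-positivity of $\Ric^+$ (or of $\Ric^+_k$) holds together with $\rho(N)=1$, that proposition already yields $c_1(N)>0$, i.e. $N$ is Fano. It is worth recalling why this holds, since it is the crux of the reduction. When $\rho(N)=1$ one has $[c_1(N)]=\ell\,[\omega_h]$ for a single real constant $\ell$. The integral formula (\ref{eq:scalar-ricci+}) relating $\Ric^+$ to the scalar curvature $S$ shows that quasi-positivity of $\Ric^+$ forces $S\ge 0$ everywhere and $S>0$ on a nonempty open set; integrating the identity $c_1(N)\wedge\omega_h^{n-1}=\frac1n S\,\omega_h^n$ of (\ref{eq:42}) then gives $d(K_N^{-1})>0$, whence $\ell>0$ and $c_1(N)>0$. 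By Yau's solution of the Calabi conjecture $N$ then carries a metric of positive Ricci curvature and is in particular Fano.

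Once $N$ is known to be Fano, the second and final step is to invoke the theorem of Campana \cite{Cam} and, independently, Koll\'ar-Miyaoka-Mori \cite{KMM}: every Fano manifold is rationally connected. This completes the argument for both the $\Ric^+$ and $\Ric^+_k$ versions, since Proposition \ref{prop:41-ct-Ric+} covers both.

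The subtle point, and the reason one cannot simply run the Bochner/viscosity machinery of Section 4 directly, is precisely the gap between quasi-positivity and strict positivity. The proofs of Theorems \ref{thm:61} and \ref{thm:cqb-rc} require a strictly positive pointwise lower bound $\delta>0$ on the relevant curvature at the maximum point of $|s|^2$ in order to defeat the contribution $\ell A$ of the ample line bundle $L$; under mere quasi-positivity the maximum of $|s|^2$ could be located exactly where $\Ric^+$ degenerates to zero, and the estimate breaks down. The role of the hypothesis $\rho(N)=1$ is to convert the \emph{global} (cohomological) positivity encoded in ``$S>0$ somewhere'' into the genuine positivity $c_1(N)>0$, after which the essential input is no longer analytic but the deep algebro-geometric fact that Fano implies rationally connected. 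Thus the main obstacle is already absorbed into Proposition \ref{prop:41-ct-Ric+} together with the cited theorem of Campana and Koll\'ar-Miyaoka-Mori; granting these, the corollary is immediate.
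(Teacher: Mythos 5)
Your proposal is correct and takes essentially the same route as the paper, which deduces this corollary precisely from Proposition \ref{prop:41-ct-Ric+} (quasi-positive $\Ric^+$ or $\Ric^+_k$ together with $\rho(N)=1$ gives $c_1(N)>0$, hence Fano via Yau) followed by the theorem of Campana \cite{Cam} and Koll\'ar--Miyaoka--Mori \cite{KMM} that Fano manifolds are rationally connected, exactly in parallel with Corollary \ref{coro:61}. Your observation that mere quasi-positivity defeats the direct viscosity/Bochner vanishing argument of Theorems \ref{thm:61} and \ref{thm:cqb-rc}, so that the hypothesis $\rho(N)=1$ is what converts ``$S>0$ somewhere'' into genuine positivity of $c_1(N)$, accurately reflects the structure of the paper's argument.
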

The same holds if $\Ric^+>0$ is replaced with $\Ric^+\ge0$ and $(N^n, h)$ is locally irreducible.
For compact K\"ahler manifolds with $\Ric^+<0$, we have the result below.

\begin{proposition} Let $(N, h)$ be a compact K\"ahler manifold with $\Ric^+<0$. Then $N$ does not admits any nonzero holomorphic  vector field.
\end{proposition}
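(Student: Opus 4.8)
The plan is to prove the vanishing by a single pointwise maximum principle applied to the squared length of a holomorphic vector field, reading off the two constituents of $\Ric^+$ by contracting the complex Hessian of this length in two complementary directions. Suppose, for contradiction, that $V=V^i\frac{\partial}{\partial z^i}$ is a nonzero holomorphic vector field. Since $N$ is compact, the smooth function $f\doteqdot |V|^2=h_{i\bar j}V^i\overline{V^j}$ attains its maximum at some $x_0\in N$, and $f(x_0)>0$, so $V(x_0)\neq 0$. I would work in a unitary normal coordinate centered at $x_0$, where $h_{i\bar j}=\delta_{ij}$ and $\partial h=0$ at $x_0$, and use holomorphicity in the form $\partial_{\bar l}V^i=0$ (equivalently $\nabla_{\bar l}V^i=0$), together with its conjugate $\partial_k\overline{V^j}=0$.

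The first step is to compute the complex Hessian of $f$ at $x_0$. Using $\partial_{\bar l}V^i=0$ and $\partial_k\partial_{\bar l}h_{i\bar j}=-R_{i\bar j k\bar l}$ at the center of normal coordinates, the first-derivative terms in $h$ drop out and one obtains
\[
\partial_k\partial_{\bar l}f=-R_{i\bar j k\bar l}\,V^i\overline{V^j}+\sum_{m}(\partial_k V^m)\overline{\partial_l V^m}.
\]
Because $x_0$ is an interior maximum of the smooth function $f$, the Hermitian matrix $(\partial_k\partial_{\bar l}f)$ is negative semi-definite there.

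Next I would extract curvature information in two ways. Taking the trace (setting $k=l$ and summing) gives $\Delta f=-\Ric(V,\overline V)+|\nabla' V|^2\le 0$, so $\Ric(V,\overline V)\ge |\nabla' V|^2\ge 0$ at $x_0$. Contracting the Hessian instead against the distinguished direction $V$, i.e. pairing with $V^k\overline{V^l}$, gives $-H(V)+|\nabla_V V|^2\le 0$, so $H(V)\ge |\nabla_V V|^2\ge 0$ at $x_0$, where $H(V)=R(V,\overline V,V,\overline V)$. Adding these and dividing the second by $|V|^2=f(x_0)>0$ yields $\Ric^+(V,\overline V)=\Ric(V,\overline V)+H(V)/|V|^2\ge 0$ at $x_0$, which contradicts $\Ric^+<0$ since $V(x_0)\neq 0$. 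Hence $V\equiv 0$.

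I do not expect a genuine analytic obstacle: unlike the comass of $(p,0)$-forms or the largest singular value treated in Section 3, the function $f=|V|^2$ is already smooth, so the ordinary second-derivative test suffices and no viscosity regularization is needed. The conceptual heart of the argument — and the only point needing care — is the decision to contract the Hessian both with the identity and along $V$: the trace alone reproduces only the classical Bochner term $\Ric$, whereas the $V$-direction contraction is precisely what produces $H(V)$ and thereby assembles the full $\Ric^+$. I would also verify the sign convention $\partial_k\partial_{\bar l}h_{i\bar j}=-R_{i\bar j k\bar l}$ and confirm that the two first-order terms $|\nabla' V|^2$ and $|\nabla_V V|^2$ are nonnegative, since it is their common favorable sign that lets both contractions push in the same direction.
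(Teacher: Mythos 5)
Your proof is correct and is essentially the paper's own argument: the paper simply states ``the proof is the same as that of \cite{NZ}'', and the \cite{NZ} proof (for $\Ric^\perp<0$) is exactly this Bochner maximum-principle computation on $f=|V|^2$ at an interior maximum, with the contraction of the complex Hessian chosen so as to assemble the relevant Ricci-type quantity. Your two contractions (the trace plus the $V$-direction, producing $\Ric(V,\overline V)+H(V)/|V|^2$) are the correct adaptation from the $\Ric^\perp$ case, where one instead contracts against the projection orthogonal to $V$, and the sign conventions and smoothness remarks all check out.
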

The proof is the same as that of \cite{NZ}.  A dual version of Theorem \ref{thm:CV1} is the following result.

\begin{theorem}\label{thm:CV2}
 (i) For $(N, h)$  a compact K\"ahler manifold with quasi-positive $^d$CQB,  $$H^1(N, T'N)=\{0\}.$$ In particular, $N$ is deformation rigid in the sense that it does not admit nontrivial infinitesimal holomorphic deformation.

(ii) If compact K\"ahler manifold $(N, h)$ has $^d$CQB$_2>0$, then its Ricci curvature is $2$-positive.

(iii) If  $(N, h)$ is compact with $^d$CQB$_1>0$, then $N$ is projective  and simply-connected.
\end{theorem}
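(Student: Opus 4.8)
The plan is to prove (i) by mirroring the Bochner argument in the proof of Theorem \ref{thm:CV1}, now arranging the curvature integrand to be $^d$CQB rather than CQB. First I would represent a class in $H^1(N, T'N)=\mathcal{H}^{0,1}_{\bar\partial}(N, T'N)$ by a $\bar\partial$-harmonic $T'N$-valued $(0,1)$-form $\phi=\phi^i_{\bar\beta}\,dz^{\bar\beta}\otimes E_i$, and associate to it, in a unitary frame, the homomorphism $A_\phi\in\operatorname{Hom}(T'N, T''N)$ determined by $\overline{A_\phi(E_i)}=\phi^j_{\bar i}E_j$. Running the Akizuki--Nakano/Bochner--Kodaira identity and integrating over $N$, the nonnegative $L^2$ terms $\int_N\big(|\partial\phi|^2+|\partial^*\phi|^2\big)$ (in the appropriate dual arrangement) are balanced against a single curvature integral whose integrand, after invoking the Kähler symmetries $R_{i\bar j k\bar l}=R_{k\bar j i\bar l}=R_{k\bar l i\bar j}$, is exactly $^d$CQB$_R(A_\phi)$. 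Quasi-positivity of $^d$CQB then forces all terms to vanish and makes the integrand strictly definite on the open set $U$ where $^d$CQB$>0$; this forces $A_\phi\equiv 0$, hence $\phi\equiv 0$, on $U$, and unique continuation for the harmonic $\phi$ propagates this to all of $N$. Thus $H^1(N, T'N)=\{0\}$, and deformation rigidity follows since $H^1(N, T'N)$ is precisely the space of infinitesimal holomorphic deformations (Kodaira--Spencer, via the Frölicher--Nijenhuis criterion \cite{FN, KS}).

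For (ii) I would test $^d$CQB$_2$ on an explicit skew rank-two homomorphism, exactly dual to the map used for CQB$_2$. Fix a unitary frame $\{E_\alpha\}$ and let $A$ be defined by $A(E_1)=\overline E_2$, $A(E_2)=-\overline E_1$, and $A(E_\alpha)=0$ for $\alpha\ge 3$. A direct computation from (\ref{eq:dcqb}) shows that the first (Ricci-type) sum contributes $\Ric(E_1,\overline E_1)+\Ric(E_2,\overline E_2)$, while the second sum collapses to $2R_{1\bar 1 2\bar 2}-2R_{1\bar 2 2\bar 1}$, which vanishes by the Kähler identity $R_{1\bar 1 2\bar 2}=R_{1\bar 2 2\bar 1}$. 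Hence $^d$CQB$(A)=\Ric(E_1,\overline E_1)+\Ric(E_2,\overline E_2)$, and since the frame is arbitrary, $^d$CQB$_2>0$ gives the $2$-positivity of the Ricci curvature.

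For (iii) I would use the rank-one test map already recorded before the theorem: taking $A(E_1)=\overline E_1$ and $A(E_\alpha)=0$ for $\alpha\ge 2$ gives $^d$CQB$(A)=\Ric(E_1,\overline E_1)+H(E_1)=\Ric^+(E_1,\overline E_1)$, so $^d$CQB$_1>0$ implies $\Ric^+>0$. Since $N$ is compact, $\Ric^+$ is uniformly bounded below by some $\delta>0$, and Theorem \ref{thm:3} then yields that $N$ is projective and simply-connected.

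The step I expect to be the main obstacle is the curvature bookkeeping in (i): producing the precise Bochner--Kodaira identity whose integrand is $^d$CQB$_R(A_\phi)$ (with the $+R$ sign) rather than the CQB combination (with $-R$) that appears in the proof of Theorem \ref{thm:CV1}. This is the genuinely ``dual'' input and is where the compact-dual computation of Calabi--Vesentini \cite{CV} enters; once the integrand is identified, the sign argument and unique continuation are routine. Parts (ii) and (iii) are short verifications that only require the defining expression (\ref{eq:dcqb}) and the Kähler symmetries of $R$.
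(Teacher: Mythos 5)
Parts (ii) and (iii) of your proposal are correct and essentially identical to the paper's proof: the skew rank-two map $A(E_1)=\overline{E}_2$, $A(E_2)=-\overline{E}_1$ kills the second sum in (\ref{eq:dcqb}) by the K\"ahler symmetries and leaves $\Ric(E_1,\overline{E}_1)+\Ric(E_2,\overline{E}_2)$, and (iii) is the rank-one test map followed by Theorem \ref{thm:3}.

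The gap is in (i), and it sits exactly where you flagged it --- but it is not mere bookkeeping, and the mechanism you propose cannot close it. If you apply the Akizuki--Nakano identity directly to the $\bar{\partial}$-harmonic $\phi=\phi^i_{\bar{\beta}}dz^{\bar{\beta}}\otimes E_i$, the curvature integrand is
\begin{equation*}
\Ric_{j\bar{i}}\,\phi^j_{\bar{\beta}}\overline{\phi^i_{\bar{\beta}}} \;-\; R_{j\bar{i}\tau\bar{\beta}}\,\phi^j_{\bar{\tau}}\overline{\phi^i_{\bar{\beta}}},
\end{equation*}
i.e.\ the CQB combination with the minus sign on the full curvature term --- precisely the identity used for Theorem \ref{thm:CV1}. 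The K\"ahler symmetries $R_{i\bar{j}k\bar{l}}=R_{k\bar{j}i\bar{l}}=R_{i\bar{l}k\bar{j}}$ permute indices inside the $R$-term but can never flip its sign: for the same $\phi$, the quantity $^d$CQB$_R(A_\phi)$ differs from the integrand above by $2R_{j\bar{i}\tau\bar{\beta}}\phi^j_{\bar{\tau}}\overline{\phi^i_{\bar{\beta}}}$, which has no sign in general. So quasi-positivity of $^d$CQB gives no control whatsoever over the Akizuki--Nakano integrand (CQB and $^d$CQB are independent conditions --- both are simultaneously positive on all classical K\"ahler C-spaces with $b_2=1$ --- so a positivity hypothesis on one says nothing about the sign of the other). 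The missing idea, which the paper supplies, is to change both the bundle and the type before running Bochner: the conjugate-linear operator $\#:A^{0,1}(T'N)\to A^{1,0}((T'N)^*)$, $\#\phi=\overline{\phi^i_{\bar{\alpha}}}\,dz^{\alpha}\otimes\overline{E}_i$, satisfies $\#(\partial\phi)=\bar{\partial}(\#\phi)$ and $\partial^*(\#\phi)=\#(\bar{\partial}^*\phi)$, hence induces an isomorphism $\mathcal{H}^{0,1}_{\bar{\partial}}(N,T'N)\cong\mathcal{H}^{1,0}_{\partial}(N,(T'N)^*)$. For $\psi\in\mathcal{H}^{1,0}_{\partial}(N,(T'N)^*)$ the Kodaira--Bochner formula for $\Delta_{\partial}$ (the paper's (\ref{eq:36})) yields both curvature terms with plus signs,
\begin{equation*}
0=\int_N |\nabla \psi|^2+\int_N \left[(\Ric)_{\alpha\bar{\sigma}}\, \psi^{\bar{i}}_\sigma \overline{\psi^{\bar{i}}_{\alpha}}+ R_{i\bar{j}\alpha\bar{\sigma}}\, \psi^{\bar{j}}_\sigma \overline{\psi^{\bar{i}}_{\alpha}}\right],
\end{equation*}
and the bracket is the $^d$CQB quadratic form applied to (the transpose of) the homomorphism associated with $\psi$ --- note the Ricci contraction lands on the form indices rather than the bundle indices, which is why the transpose appears; this is harmless since the positivity hypothesis is quantified over all $A$. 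Dualizing $T'N$ is what flips the sign of the $R$-term, and the conjugation is what makes the resulting expression a Hermitian form in $\psi$; neither effect can be produced by symmetries of $R$ acting on the original $\phi$. Once this $\#$-duality step is inserted, the remainder of your argument --- vanishing on the open set where $^d$CQB$>0$ and unique continuation of the harmonic form --- goes through exactly as in Theorem \ref{thm:CV1} and agrees with the paper.
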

\begin{proof} For (i) one  may use the conjugate operator $\#: A^{0, 1}(T'N)\to A^{1, 0} ((T'N)^*)$ which is defined for $\phi=\phi^i_{\bar{\alpha}}dz^{\bar{\alpha}}\otimes E_i$,  with $\{E_i\}$ being a unitary frame of $T'N$, as
$$
\# \phi= \overline{\phi^{i}_{\bar{\alpha}}} dz^\alpha \otimes \overline{E}_i.
$$
Since $\#(\partial \phi)=\bar{\partial}(\#(\phi))$, it implies that $\partial^*(\# (\phi))= \#(\bar{\partial}^* \phi)$. Together $\#$ induces an isomorphism between $\mathcal{H}_{\bar{\partial}}^{p, q}(N, T'N)$ and $\mathcal{H}_{\partial}^{q, p}(N, (T'N)^*)$. To prove the result, it suffices to show that any $\psi\in \mathcal{H}_{\partial}^{1, 0}(N, (T'N)^*)$, $\psi=0$. Now we apply the Kodaira-Bochner formula for $\Delta_{\partial}$ operator,  and get for $\psi=\psi^{\bar{i}}_\alpha dz^{\alpha}\otimes \overline{E}_i$
\begin{equation}\label{eq:36}
(\Delta_{\partial} \psi)^{\bar{i}}_{\gamma} =-h^{\alpha \bar{\beta}}\nabla_{\bar{\beta}}\nabla_\alpha \psi^{\bar{i}}_{\gamma}+R^{\bar{i} \,\,\,\, \sigma}  _{\,\,  \bar{j}\gamma} \psi^{\bar{j}}_\sigma +(\Ric)_\gamma^\sigma \psi^{\bar{i}}_\sigma.
\end{equation}
Taking product with $\overline{\psi}$,  as before under the unitary frame, if $\Delta_{\partial} \psi=0$ we have that
$$
0=\int_N |\nabla \psi|^2+\int_N \left[(\Ric)_{\alpha\bar{\sigma}} \psi^{\bar{i}}_\sigma \overline{\psi^{\bar{i}}_{\alpha}}+ R_{i\bar{j}\alpha\bar{\sigma}} \psi^{\bar{j}}_\sigma \overline{\psi^{\bar{i}}_{\alpha}}\right].
$$
The claimed result follows in the similar way as in the proof of Theorem \ref{thm:CV1}.

For part (ii), for any unitary frame $\{E_i\}$, let $A$ be the rank 2 skew-symmetric transformation: $A(E_1)=\overline{E}_2$, $A(E_2)=-\overline{E}_1$, and $A(E_k)=0$ for all $k\ge 3$. Then as in the CQB$>0$ case, the second part in the expression of $^d$CQB vanishes and the first part yields $\Ric(E_1, \overline{E}_1)+\Ric(E_2, \overline{E}_2)$.

Part (iii) follows from that $^d$CQB$_1>0$ is the same as $\Ric^{+}>0$ and Theorem \ref{thm:3}.
\end{proof}

By a similar argument (comparing the Einstein constant with the smallest eigenvalue of the symmetric curvature $Q$  obtained in tables of  \cite{Itoh}) as in the proof of Theorem \ref{thm:NWZ1} we also have the following corollary concerning K\"ahler C-spaces.

\begin{theorem}\label{thm:NWZ12}Let $N^n$ be a classical K\"ahler C-space with $n\geq 2$ and $b_2=1$, or a compact exceptional Hermitian symmetric space with $n\ge 2$. Then the (unique up to constant multiple) K\"ahler-Einstein metric has $^d$CQB$>0$. In particular, for a classical K\"ahler C-space $N$ with $b_2=1$, $H^q(N, T'N)=\{0\}$ with $1\le q\le n$, and $N$ is deformation rigid in the sense that it does not admit nontrivial infinitesimal holomorphic deformation.
\end{theorem}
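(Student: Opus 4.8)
The plan is to mirror the proof of Theorem \ref{thm:NWZ1} for CQB almost verbatim, the only structural difference being a single sign. First I would pass to the K\"ahler-Einstein metric (unique up to scale), so that $\Ric=\lambda h$ with $\lambda>0$, and expand the defining expression \eqref{eq:dcqb} in a unitary frame $\{E_\alpha\}$. Writing a linear map $A\colon T'_xN\to T''_xN$ as $A(E_k)=A^{\bar t}_k\overline{E}_t$, the first term of \eqref{eq:dcqb} is a pure Ricci contraction and collapses to $\lambda|A|^2$, while the second term becomes the Hermitian expression $\sum_{i,k,s,t}R_{i\bar k s\bar t}\,A^{\bar t}_k\,\overline{A^{\bar s}_i}$, which---in contrast to the CQB expression \eqref{eq:34}---enters with a plus sign.

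Next I would split $A=A_1+A_2$ into its symmetric and skew-symmetric parts with respect to the frame. Since $R_{i\bar k s\bar t}$ is symmetric under the interchange $\bar k\leftrightarrow\bar t$ of its two anti-holomorphic indices, contracting it against the skew part $A_2$ gives zero, and the same symmetry kills the cross terms, exactly as in the CQB computation; hence $A_2$ contributes only through $\lambda|A_2|^2$. On the symmetric part the surviving curvature form is precisely the Calabi-Vesentini--Itoh Hermitian operator $Q$ on symmetric $2$-tensors used in the proof of Theorem \ref{thm:NWZ1} (its spectrum is conjugation-invariant, hence unchanged by the passage from $T'$ to $T''$). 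This gives
$$
{}^d\mathrm{CQB}_R(A)=\lambda|A_1|^2+\lambda|A_2|^2+Q(A_1),
$$
so that positivity for every $A\neq0$ reduces to the single scalar inequality $\lambda+\nu_{\min}(Q)>0$, where $\nu_{\min}(Q)$ is the smallest eigenvalue of $Q$. This is the exact dual of the condition $\lambda>\nu_{\max}(Q)$ that governs CQB$>0$.

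I would then carry out the comparison case by case. For the compact Hermitian symmetric spaces (both classical and exceptional) Table~2 of \cite{CV} shows $Q\ge0$---on a rank-one tensor $X\cdot X$ it returns $H(X)\ge0$---so $\nu_{\min}(Q)\ge0$ and $^d$CQB$>0$ follows at once from $\lambda>0$. For the classical K\"ahler C-spaces with $b_2=1$ that are not Hermitian symmetric, the reduction in \cite{NWZ} leaves only the three families $(B_r,\alpha_i)_{r\ge3,\,1<i<r}$, $(C_r,\alpha_i)_{r\ge3,\,1<i<r}$ and $(D_r,\alpha_i)_{r\ge4,\,1<i<r-1}$, whose curvature eigenvalues and Einstein constants are tabulated in \cite{Itoh} and \cite{ChauTam}. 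On these spaces $Q$ is genuinely indefinite (they carry directions of negative holomorphic sectional curvature), so the statement $\lambda+\nu_{\min}(Q)>0$ is not formal; checking this strict inequality for each of the three families from the tabulated data is exactly where I expect the real work---and the main obstacle---to lie, just as the verification $\lambda>\nu_{\max}(Q)$ was the crux for CQB in \cite{NWZ}.

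Finally, the \emph{in particular} assertions require no new curvature estimate. Having established $^d$CQB$>0$, which is in particular quasi-positive, I would invoke Theorem \ref{thm:dr-cv1}(ii) (equivalently, Theorem \ref{thm:CV2}(i) for the degree $q=1$ together with the general vanishing theorem for $^d$CQB$>0$) to conclude $H^q(N,T'N)=\{0\}$ for all $1\le q\le n$, and hence, through the Fr\"olicher--Nijenhuis criterion \cite{FN, KS}, the deformation rigidity of every classical K\"ahler C-space with $b_2=1$.
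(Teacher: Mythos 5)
Your proposal is correct and takes essentially the same route as the paper's proof: the symmetric/skew decomposition of $A$ with the K\"ahler symmetries of $R_{i\bar k s\bar t}$ annihilating the skew and cross terms, the reduction of $^d$CQB$>0$ to the scalar condition $\lambda+\nu_1(Q)>0$ for the Calabi--Vesentini--Itoh operator $Q$, Table 2 of \cite{CV} for the (classical and exceptional) Hermitian symmetric cases, the tables of \cite{Itoh} (in the normalization of \cite{ChauTam}) for the three remaining families $(B_r,\alpha_i)$, $(C_r,\alpha_i)$, $(D_r,\alpha_i)$, and the Fr\"olicher--Nijenhuis criterion \cite{FN, KS} for rigidity. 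The only items the paper supplies that you defer or shortcut are minor: it carries out the three-family arithmetic explicitly (e.g.\ $\lambda+\nu_1=i-1>0$ for $(C_r,\alpha_i)$, and $\lambda+\nu_1$ equal to $i+1$ or $2r-i-2$, resp.\ $i+1$ or $2r-i-3$, for the $B_r$ and $D_r$ cases), and for $q>1$ it does not quote a general vanishing theorem but notes that the argument of \cite{CV} reduces the vanishing of $H^q(N,T'N)$ to $\lambda+\frac{q+1}{2q}\nu_1>0$, which follows from the $q=1$ inequality since $\frac{q+1}{2q}\le 1$ --- and note that your parenthetical inference ``$Q\ge 0$ because $Q(X\cdot X)=H(X)\ge 0$'' is a non sequitur (nonnegativity on rank-one symmetric tensors does not bound the full spectrum), though the citation of Table 2 of \cite{CV}, which both you and the paper rely on, covers it.
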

\begin{proof} To check $^d$CQB$>0$, writing $A(E_i) =A_i^t \overline{E}_t$, we  then apply an argument similar to  the case of CQB. First  decompose $A$ into $A_1+A_2$, the symmetric and the skew symmetric parts.  As in the proof of Theorem \ref{thm:NWZ1}
$$
^dCQB(A)\ge \lambda |A_1|^2+R_{i\bar{k} s\bar{t}}\overline{(A_1)^s_i}(A_1)^t_k.
$$
Here $\lambda$ is the Einstein constant of the canonical metric.
 The problem is now reduced to check that $\lambda+\nu_1>0$ with $\nu_1$ being the smallest eigenvalue of $Q$. Recall  the Hermitian symmetric linear operator  $Q$ is defined as
 $$
 Q(X\cdot Y, \overline{Z\cdot W})=R_{X\overline{Z} Y\overline{W}}
 $$
 for $X\cdot Y=\frac{1}{2}(X\otimes Y+Y\otimes X)$ and extended linearly on the space of symmetric tensors.  This quadratic curvature  was considered previously  in \cite{CV, Itoh}. We apply their results below.
  The Hermitian symmetric case again follows from Table 2 of \cite{CV}. For the nonsymmetric classical K\"ahler C-spaces, we check the condition $\lambda+\nu_1>0$   as follows. Note that in \cite{ChauTam} and \cite{Itoh} the same normalization for the canonical metric was used.
For $(B_r, \alpha_i)_{r\ge 3, 1<i<r}$, $\lambda=2r-i$. According to Table 4 of \cite{Itoh} $\nu_1=-2(r-i)+1$ or $-2$. Since $2r\ge 2i+2$, clearly $2r-i>2$. Also $2r-i-2r+2i+1=i+1>0$. This implies  the result for both cases of $\nu_1$, namely $\nu_1=-2(r-i)+1$ and $\nu_1=-2$.

For $ (C_r, \alpha_i)_{r\ge 3, 1<i<r}$, $\lambda=2r-i+1$. According to Table 7 of \cite{Itoh}, $\nu_1=-2(r-i+1)$. Hence $\lambda+\nu_1=i-1>0$ for $i\ge 2$. This verifies the result.

For $(D_r, \alpha_i)_{r\ge 4, 1< i< r-1}$, $\lambda=2r-i-1$.  According to Table 10 of \cite{Itoh} $\nu_1=-2(r-i)+2$ or $-2$. Since $2r-i-3\ge i-1>0$ and $2r-i-1-2r+2i+2=i+1>0$, this also verifies the result.

This proved the $H^1(N, T'N)=\{0\}$.  For $q>1$, the argument of \cite{CV} implies that one only needs to check
that $\lambda+\frac{q+1}{2q}\nu_1>0$. This is a consequence of the $q=1$ case above.
\end{proof}

 For the exceptional space $(F_4, \alpha_4)$ since $\lambda=11/2$ and $\nu_1=-5$, the above result also holds. Hence it should not be surprising that the  result in the corollary  holds for the rest ($22$ of them total)  exceptional K\"ahler C-spaces. The deformation rigidity result above holds infinitesimally. It would be interesting to see for a deformation with each fiber  except the central is assumed to be biholomorphic to a fixed manifold with  $^d$CQB$>0$, whether or not  the central fiber must be  the same manifold (as the main theorem of  \cite{Siu}). If express $A=\sum_{\ell=1}^k \overline{X}_\ell\otimes \overline{Y}_\ell$, we have that  $^d$CQB$_k>0$ if and only if
 \begin{equation}
\label{eq:dcqb-k}
\sum_{\ell, j=1}^k \Ric( X_\ell, \overline{X}_j) \langle Y_\ell, \overline{Y}_j\rangle +R(X_\ell, \overline{X}_j, Y_\ell, \overline{Y}_j)>0,\quad  \forall \sum_{\ell=1}^k\overline{X}_\ell\otimes \overline{Y}_\ell \ne 0.
 \end{equation}
 Non locally Hermitian symmetric examples of compact K\"ahler manifold with $^d$CQB$<0$ have been constructed in \cite{NZ-cross}.

  Questions similar to those in (Q1) can be asked for  $^d$CQB.   We also add the following question {\it (Q2): For which $k\in \{1, \cdots, n\}$, $\Ric_k, \Ric^{\perp}_k, \Ric^{+}_k$,  CQB$_k\ge 0$, and $^d$CQB$_k\ge 0$ are preserved under the K\"ahler-Ricci flow? }

  Concerning this,  some Ricci-flow invariant cones have been constructed recently in \cite{NZ-cross}. It is also know that $\Ric^{\perp}_2$ is preserved by the K\"ahler-Ricci flow.

 \section{ Appendix-Estimates on the harmonic $(1,1)$-forms of low rank}

  Here we prove a vanishing theorem for harmonic $(1,1)$-forms of low rank related to the condition QB$_k>0$ introduced earlier. This is particularly relevant given that in \cite{NZ-cross} examples of arbitrary large $b_2$ was constructed with CQB$>0$ (in particular with $\Ric^\perp>0$).  First recall that
  $$ QB_R(A)=\sum_{\alpha, \beta=1}^nR(A(E_\alpha), \overline{A(E_\alpha)}, E_\beta, \overline{E}_\beta)-R(E_\alpha, \overline{E}_\beta, A(E_\beta), \overline{A(E_\alpha)})
 $$ vanishes for $A=\lambda \id $.  Hence when define QB$_k(A)>0$ we require the above expression positive for $A$ in  $S^2(\mathbb{C}^n)\setminus \{\lambda \id\}$,  and that $A$ has rank not greater than $k$. The space of harmonic $(1,1)$-forms  $\mathcal{H}_{\bar{\partial}}^{1,1}$ can be decomposed further. First we observe that an $(1,1)$-form $\Omega =\sqrt{-1} A_{i\bar{j}}dz^i\wedge dz^{\bar{j}}$ can be decomposed as
 $$
 \Omega =\Omega_1-\sqrt{-1}\Omega_2=\frac{\sqrt{-1}}{2} B_{i\bar{j}}dz^i\wedge dz^{\bar{j}} -\sqrt{-1} ( \frac{\sqrt{-1}}{2}C_{i\bar{j}}dz^i\wedge dz^{\bar{j}})
 $$
 with
 $$
 B_{i\bar{j}}=A_{i\bar{j}}+\overline{A_{j\bar{i}}};  \quad \quad C_{i\bar{j}}=\sqrt{-1}\left(A_{i\bar{j}}-\overline{A_{j\bar{i}}}\right).
 $$
 If $\Omega$ is harmonic, then $\partial \Omega =\bar{\partial}\Omega =0$. It can be verified that $\Omega_1$ and $\Omega_2$ are both harmonic (cf. Theorem 5.4 in Chapter 3 of \cite{MK}). This shows that $\Omega$ can be decomposed into the sum of a Hermitian symmetric one with $-\sqrt{-1}$ times another Hermitian symmetric one. Namely $\mathcal{H}_{\bar{\partial}}^{1,1}=\mathcal{H}_{\bar{\partial}, s}^{1,1}-\sqrt{-1}\mathcal{H}_{\bar{\partial}, s}^{1,1}$, where $\mathcal{H}_{\bar{\partial}, s}^{1,1}$ is the spaces of harmonic $\Omega$ with $(A_{i\bar{j}})$ being Hermitian symmetric.  Within $\mathcal{H}_{\bar{\partial}, s}^{1,1}$ we consider $\mathcal{H}_{\bar{\partial}, s}^{1,1}\setminus \{\mathbb{C}\omega\}$. To prove $b_2=1$ under the assumption $QB>0$, it suffices to show that $\mathcal{H}_{\bar{\partial}, s}^{1,1}\setminus \{\mathbb{C}\omega\}=\{0\}$. We can stratify the space into ones with rank bounded from above. Let $\mathcal{H}^{1,1}_{s, k}$ denote the subspace of $\mathcal{H}_{\bar{\partial}, s}^{1,1}$ which consists of $\Omega=\frac{\sqrt{-1}}{2}A_{i\bar{j}}dz^i\wedge dz^{\bar{j}}$ with $(A_{i\bar{j}})$ being Hermitian symmetric and  of rank no greater than $k$ everywhere on $N$.  The following result can be shown.

 \begin{theorem} Assume that $(N^n, g)$ is a compact K\"ahler manifold with quasi-positive QB$_k$ with $k<n$. Then
 $
 \mathcal{H}^{1,1}_{s, k}(N)=\{0\}.
 $
 In particular, $\Ric^\perp>0$ implies that $\mathcal{H}^{1,1}_{s, 1}(N)=\{0\}$.
 \end{theorem}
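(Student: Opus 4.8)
The plan is to run the classical integrated Bochner--Weitzenb\"ock technique for harmonic $(1,1)$-forms, exploiting the fact---recalled in the introduction---that $QB_R$ is precisely the curvature operator appearing in the Bochner formula for harmonic two-forms. Let $\Omega=\frac{\sqrt{-1}}{2}A_{i\bar j}\,dz^i\wedge d\bar z^j\in\mathcal{H}^{1,1}_{s,k}(N)$, so that $A=(A_{i\bar j})$ is a \emph{smooth} Hermitian symmetric endomorphism field with rank $\le k$ everywhere; since $\Omega$ and $A$ are smooth, the classical integrated Bochner argument suffices and the viscosity/comass machinery of Section 3 (needed only because the comass of a $(p,0)$-form is merely continuous) is not required here. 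Using the Weitzenb\"ock decomposition $\Delta_d=\nabla^*\nabla+\mathcal{R}$, pairing with $\Omega$, and integrating over the compact $N$, I would obtain
\begin{equation*}
0=\int_N|\nabla\Omega|^2\,dV+c\int_N QB_R(A)\,dV
\end{equation*}
for a positive constant $c$, where the curvature pairing $\langle\mathcal{R}\,\Omega,\Omega\rangle$ equals $c\,QB_R(A)$ by the defining expression for QB.

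First I would verify the pointwise sign of the curvature integrand. At each $x\in N$ the endomorphism $A(x)$ has rank $\le k<n$, so if $A(x)\ne 0$ it cannot be a multiple of the identity; hence $A(x)\in S^2(\mathbb{C}^n)\setminus\{\mathbb{C}\,\id\}$ with rank $\le k$, and the hypothesis $\mathrm{QB}_k\ge 0$ gives $QB_R(A(x))\ge 0$ (while $QB_R(0)=0$). Both integrands are therefore nonnegative, forcing $\nabla\Omega\equiv 0$ and $QB_R(A)\equiv 0$ on $N$. The crucial role of the assumption $k<n$ enters exactly here: for $k=n$ the K\"ahler form itself lies in $\mathcal{H}^{1,1}_{s,n}$ and the excluded direction $\{\mathbb{C}\,\id\}$ cannot be avoided, so the statement would genuinely fail.

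Next I would propagate the vanishing from a single point to all of $N$. Parallelism $\nabla\Omega\equiv 0$ makes $|A|^2$ a constant function on the connected $N$. Choose $x_0$ with $\mathrm{QB}_k(x_0)>0$, which exists by quasi-positivity. If $A(x_0)\ne 0$, then $A(x_0)$ is a nonzero Hermitian matrix of rank $\le k<n$, hence not a multiple of the identity, so strict positivity yields $QB_R(A(x_0))>0$, contradicting $QB_R(A)\equiv 0$. Therefore $A(x_0)=0$, whence $|A|^2\equiv 0$ and $A\equiv 0$, i.e. $\Omega=0$; this proves $\mathcal{H}^{1,1}_{s,k}(N)=\{0\}$. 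For the final assertion I would invoke that $\mathrm{QB}_1>0$ is equivalent to $\Ric^\perp>0$ (established earlier), and that rank-one endomorphisms fall under $k=1<n$, so $\Ric^\perp>0$ implies $\mathcal{H}^{1,1}_{s,1}(N)=\{0\}$.

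The step I expect to be the main obstacle is the very first one: pinning down the Bochner formula for harmonic $(1,1)$-forms with the correct normalization and sign, so that the net curvature term is genuinely $+\,c\,QB_R(A)$ with $c>0$, rather than its negative or a form carrying leftover Ricci contributions. Once this identification is secured---it is in essence the computation that historically motivated the definition of QB---the remainder is the standard rigidity dichotomy, whose only delicate feature is the systematic use of $\operatorname{rank}A\le k<n$ to guarantee that $A(x)$ stays inside the admissible class $S^2(\mathbb{C}^n)\setminus\{\mathbb{C}\,\id\}$ at every point, both for the nonnegativity step and for the strict-positivity step at $x_0$.
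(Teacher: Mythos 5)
Your proposal is correct and rests on the same engine as the paper: the integrated Bochner identity for the harmonic form $\Omega$, in which the curvature term is exactly a positive multiple of $QB_R(A)$ (the paper's Kodaira--Bochner formula gives $\frac{1}{2}\left(\nabla_\gamma \nabla_{\bar{\gamma}}+\nabla_{\bar{\gamma}}\nabla_\gamma \right)\|\Omega\|^2=\|\nabla_{\gamma} \Omega\|^2+\|\nabla_{\bar{\gamma}} \Omega\|^2+2\,QB(\Omega)$, so your undetermined constant is $c=2$), together with the key pointwise observation that $\operatorname{rank} A(x)\le k<n$ keeps $A(x)$ outside $\{\mathbb{C}\,\id\}$ whenever $A(x)\ne 0$, so the quasi-positivity of $\mathrm{QB}_k$ applies at every point. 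Where you genuinely differ is the endgame. The paper argues by contradiction via unique continuation: assuming $\Omega\not\equiv 0$, the harmonic form cannot vanish identically on the open set where $\mathrm{QB}_k>0$, so $\int_N QB(\Omega)\,d\mu>0$, contradicting the vanishing of the integrated Laplacian. You instead first extract $\nabla\Omega\equiv 0$ and $QB_R(A)\equiv 0$ from the integral identity, use parallelism to make $|A|$ constant, and then annihilate $A$ at the point of strict positivity. Your route is a bit more elementary---it trades the unique continuation property of elliptic systems (whose invocation in the paper is stated rather tersely) for the cheap rigidity of parallel tensors---and it is consonant with the paper's own remark immediately after its proof that $\mathrm{QB}_k\ge 0$ alone forces every element of $\mathcal{H}^{1,1}_{s,k}(N)$ to be parallel, which is how the paper derives $\dim\bigl(\mathcal{H}^{1,1}_{s,k}(N)\bigr)\le k^2$ and the De Rham splitting corollary. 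Your side remarks are also sound: the exclusion $k<n$ is indeed what keeps the K\"ahler form (rank $n$, $A\in\mathbb{C}\,\id$) out of play, and the final assertion follows from the paper's stated equivalence of $\mathrm{QB}_1>0$ with $\Ric^\perp>0$.
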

\begin{proof} Assume that $\Omega$ is a nonzero element in $\mathcal{H}^{1,1}_{s, k}(N)$.  Applying the $\Delta$ operator  to $\|\Omega\|^2$,  by Kodaira-Bochner formula  we have that
$$
\frac{1}{2}\left(\nabla_\gamma \nabla_{\bar{\gamma}}+\nabla_{\bar{\gamma}}\nabla_\gamma \right)\|\Omega\|^2(x)=\|\nabla_{\gamma} \Omega\|^2(x)+\|\nabla_{\bar{\gamma}} \Omega\|^2(x)+2QB(\Omega)(x).
$$
Integrating on $N$ we have that
$$
0=\int_N \left[\|\nabla_{\gamma} \Omega\|^2(x)+\|\nabla_{\bar{\gamma}} \Omega\|^2(x)\right]\, d\mu(x)+2\int_N QB(\Omega)(x)\, d\mu(x) >0.
$$
The last strictly inequality is due to the fact that by the unique continuation we know at a neighborhood $U$ where QB$_k>0$, and $\Omega$ is identically zero. The contradiction implies that $ \Omega \equiv 0$.
\end{proof}

For any holomorphic line bundle $L$ over $N$ with a Hermitian metric $a$, its first Chern form $c_1(L,a)=-\frac{\sqrt{-1}}{2}\partial \bar{\partial} \log a$ is a Hermitian symmetric $(1,1)$-form. If $\eta$ is the harmonic representative of $c_1(L, a)$, then  $\eta$ is Hermitian symmetric by the uniqueness of the Hodge decomposition and K\"ahler identities (cf. \cite{MK}, Chapter 3). The following is a simple observation towards possible topological meanings of  the rank of $\eta$ (the minimum $k$ such that $\eta\in \mathcal{H}^{1,1}_{\bar{\partial}, k}$, denoted as $rk(L)$).

\begin{proposition} Recall that the numerical dimension of $L$ is defined as
$$
nd(L)=\max \{ k=0, \cdots, n: c_1(L)^k\ne 0\}.
$$
Then $rk(L)\ge nd(L)$.
\end{proposition}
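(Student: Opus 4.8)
The plan is to exploit the elementary fact that the cup powers of $c_1(L)$ are represented at the level of forms by the wedge powers of the harmonic representative $\eta$, and then to read off the pointwise rank of $\eta$ at a single well-chosen point. Recall first that $rk(L)$, being the smallest $k$ with $\eta\in\mathcal{H}^{1,1}_{\bar\partial,k}$, is exactly $\max_{x\in N}\operatorname{rank}\big((A_{i\bar j})(x)\big)$, where $\eta=\frac{\sqrt{-1}}{2}A_{i\bar j}\,dz^i\wedge dz^{\bar j}$ with $(A_{i\bar j})$ Hermitian symmetric.

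First I would record the two structural facts needed. Since $\eta$ is harmonic it is in particular $d$-closed, and by construction $[\eta]=c_1(L)\in H^2(N,\mathbb{R})$. Consequently, for each $m$ the wedge power $\eta^m\doteqdot\eta\wedge\cdots\wedge\eta$ ($m$ factors) is a $d$-closed $(m,m)$-form whose de Rham class is the cup power, $[\eta^m]=c_1(L)^m$, because the cup product on de Rham cohomology is represented by the wedge product of closed representatives.

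Next, set $m=nd(L)$. By definition of the numerical dimension $c_1(L)^m\neq 0$ in $H^{2m}(N,\mathbb{R})$, hence $\eta^m$ is not exact and, in particular, cannot vanish identically on $N$; so there is a point $x_0\in N$ with $\eta^m(x_0)\neq 0$. This is the only place where the hypothesis $nd(L)=m$ is used. The remaining step is pointwise linear algebra at $x_0$: choosing a unitary frame that diagonalizes $(A_{i\bar j})(x_0)$ we may write $\eta(x_0)=\frac{\sqrt{-1}}{2}\sum_{i=1}^n\lambda_i\,dz^i\wedge dz^{\bar i}$ with $\lambda_i\in\mathbb{R}$, whence
$$
\eta^m(x_0)=\Big(\tfrac{\sqrt{-1}}{2}\Big)^m m!\sum_{i_1<\cdots<i_m}\lambda_{i_1}\cdots\lambda_{i_m}\,dz^{i_1}\wedge dz^{\bar i_1}\wedge\cdots\wedge dz^{i_m}\wedge dz^{\bar i_m}.
$$
The distinct multi-indices index linearly independent $(m,m)$-covectors, so $\eta^m(x_0)\neq 0$ forces $\lambda_{i_1}\cdots\lambda_{i_m}\neq 0$ for some $i_1<\cdots<i_m$, i.e. at least $m$ of the eigenvalues $\lambda_i$ are nonzero. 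Therefore $\operatorname{rank}\big((A_{i\bar j})(x_0)\big)\ge m$, and taking the maximum over $N$ yields $rk(L)\ge m=nd(L)$.

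The argument presents no serious obstacle; the one point deserving care is the passage from nonvanishing of the cohomology class $c_1(L)^m$ to nonvanishing of the form $\eta^m$ at an actual point, which rests on the observation that an identically vanishing form represents the zero class. Everything else is the standard diagonalization of a Hermitian $(1,1)$-form together with the identification of cup product with wedge product on closed forms.
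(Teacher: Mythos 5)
Your proof is correct and complete. The paper itself offers no proof of this proposition --- it is stated as a ``simple observation'' --- and your argument is exactly the one the paper evidently intends: identify $rk(L)$ with $\max_{x\in N}\operatorname{rank}\bigl((A_{i\bar j})(x)\bigr)$, use that the closed form $\eta^m$ represents the cup power $c_1(L)^m$ so that $c_1(L)^{nd(L)}\neq 0$ forces $\eta^{nd(L)}(x_0)\neq 0$ at some point $x_0$, and then conclude by unitary diagonalization of the Hermitian matrix $(A_{i\bar j})(x_0)$, where the multinomial expansion $\eta^m(x_0)=\bigl(\tfrac{\sqrt{-1}}{2}\bigr)^m m!\sum_{i_1<\cdots<i_m}\lambda_{i_1}\cdots\lambda_{i_m}\,dz^{i_1}\wedge dz^{\bar i_1}\wedge\cdots\wedge dz^{i_m}\wedge dz^{\bar i_m}$ with linearly independent monomials shows at least $m$ eigenvalues are nonzero; the one delicate passage (from nonvanishing of the class to nonvanishing of the form at a point) is correctly justified in your write-up.
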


 The proof of the above theorem also shows that  if QB$_k\ge 0$, then any element in $\mathcal{H}^{1,1}_{s, k}(N)$ must be parallel. Thus we have the dimension estimate:
$$
\dim(\mathcal{H}^{1,1}_{s, k}(N))\le k^2.
$$
In fact the existence of a non-vanishing $(1,1)$-form of rank at most $k$ has a  strong implication due to the De Rham decomposition.

\begin{corollary} Assume that QB$_k\ge 0$ and $\mathcal{H}^{1,1}_{s, k}(N)\ne \{0\}$. Then $N$ must be locally reducible. In particular, if $N$ is locally irreducible and $\Ric^\perp\ge 0$, then $\mathcal{H}^{1,1}_{s, 1}(N)=\{0\}$.
\end{corollary}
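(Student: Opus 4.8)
The plan is to combine the parallelism already noted just above the corollary with the de Rham decomposition theorem. First I would fix a nonzero $\Omega=\frac{\sqrt{-1}}{2}A_{i\bar j}\,dz^i\wedge dz^{\bar j}\in\mathcal{H}^{1,1}_{s,k}(N)$, with $(A_{i\bar j})$ Hermitian symmetric of rank at most $k$ at every point. Integrating the Kodaira--Bochner identity displayed in the previous proof gives $0=\int_N(\|\nabla_\gamma\Omega\|^2+\|\nabla_{\bar\gamma}\Omega\|^2)\,d\mu+2\int_N QB(\Omega)\,d\mu$, and under QB$_k\ge 0$ the second integrand is nonnegative (the rank of $\Omega$ is at most $k$), so both integrands vanish and $\Omega$ is parallel --- exactly the statement recorded in the paragraph preceding the corollary. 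Using the K\"ahler metric I would then view $\Omega$ as a parallel, self-adjoint endomorphism field $A$ of the holomorphic tangent bundle $T'N$.

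Next I would read off the eigenstructure of $A$. Since $A$ is parallel and Hermitian, its eigenvalues are constant on the connected manifold $N$, and the spectral projections of $A$ (fixed polynomials in $A$ with coefficients determined by the constant spectrum) are parallel; hence each eigenspace is a parallel, $J$-invariant holomorphic subbundle of $T'N$. Because $\Omega\ne 0$ the endomorphism $A$ is not identically zero, while its rank is at most $k<n$ everywhere, so its kernel is a nonzero subbundle of complex rank $\ge n-k\ge 1$. Consequently $A$ has at least two distinct eigenvalues and $T'N$ splits as a direct sum $T'N=D_1\oplus D_2\oplus\cdots$ of at least two nonzero parallel holomorphic distributions.

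A proper parallel distribution is involutive and invariant under the holonomy group, so the holonomy representation of $(N,g)$ is reducible; since $J$ is parallel the factors are themselves K\"ahler. By the de Rham decomposition theorem $N$ is therefore locally a K\"ahler product, i.e. locally reducible, which proves the first assertion. For the ``in particular'' part I would use that $\Ric^\perp\ge 0$ is precisely the condition QB$_1\ge 0$ (the rank-one case of (\ref{eq:12}) reduces to $\Ric^\perp$, as recorded in Section 5). If $\mathcal{H}^{1,1}_{s,1}(N)$ contained a nonzero element, the first part of the corollary would force $N$ to be locally reducible, contradicting the hypothesis of local irreducibility; hence $\mathcal{H}^{1,1}_{s,1}(N)=\{0\}$.

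The main obstacle is the step turning a parallel endomorphism of intermediate rank into genuine local reducibility: one must check carefully that the eigenspace distributions are parallel (so that they are involutive and holonomy-invariant) and that the resulting proper invariant subbundle actually triggers the de Rham splitting in the K\"ahler category. The remaining ingredients --- parallelism of $\Omega$ from the Bochner step, constancy of the eigenvalues, and the rank bookkeeping $1\le \operatorname{rank}A\le k<n$ --- are routine once this is in place.
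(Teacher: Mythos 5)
Your proposal is correct and follows essentially the same route as the paper: the Bochner integral identity under QB$_k\ge 0$ forces $\Omega$ to be parallel, and the paper then simply observes that the null space of the associated endomorphism (nontrivial since $\operatorname{rank} A\le k<n$, proper since $\Omega\ne 0$) is invariant under parallel transport, yielding a nontrivial parallel distribution and hence the local de Rham splitting. Your fuller spectral decomposition into parallel eigenbundles is a mild elaboration of the same idea --- the kernel alone suffices --- and your handling of the ``in particular'' part via QB$_1\ge 0\Leftrightarrow\Ric^\perp\ge 0$ matches the paper exactly.
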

\begin{proof} By the above, we know that the nonzero $\Omega\in \mathcal{H}^{1,1}_{s, k}(N)$ must be parallel. Its null space is invariant under the parallel transport. This provides a nontrivial parallel distribution, hence the local splitting.
\end{proof}

The product example $\mathbb{P}^2\times \mathbb{P}^2$, which satisfies $\Ric^\perp>0$ and supports non-trivial rank 2 harmonic $(1,1)$-forms, shows that the above result is sharp for $\Ric^\perp>0$. Irreducible examples of dimension greater than 4 were  constructed via the projectivized bundles in \cite{NWZ}.

\section*{Acknowledgments} {}

 The author would like to thank James McKernan for helpful discussions, L.-F. Tam and F. Zheng and for their interests. F. Zheng read the draft carefully, spotted a discrepancy  and made  helpful suggestions. The author also thank B. Wilking for  the  connection between $(2k-1)$-Ricci and $\Ric_k$.  The first version of the paper was completed during the author's visit of CUHK, Fuzhou Normal Univ., Peking Univ., SUST and Xiamen Univ., in December 2018. He thanks these institutions for the hospitality and the referee for very help comments.

\end{document}